\newcommand{\dd}{\, {\rm d}}
\newcommand{\abs}[1]{\left\vert#1\right\vert}
\newcommand{\norm}[1]{\left\Vert#1\right\Vert}  
\newcommand{\R}{\ensuremath{{\mathbb R}}}
\newcommand{\beq}{\begin{equation}}
\newcommand{\eeq}{\end{equation}}
\newcommand{\beqs}{\begin{equation*}}
\newcommand{\eeqs}{\end{equation*}}
\newcommand{\bal}{\begin{equation}\begin{aligned}}
\newcommand{\eal}{\end{aligned}\end{equation}}
\newcommand{\bals}{\begin{equation*}\begin{aligned}}
\newcommand{\eals}{\end{aligned}\end{equation*}}
\newcounter{num} \numberwithin{num}{section}
\newtheorem{theorem}[num]{Theorem}
\newtheorem{proposition}[num]{Proposition}
\newtheorem{lemma}[num]{Lemma}
\theoremstyle{definition}
\newtheorem{definition}[num]{Definition}
\theoremstyle{remark}
\newtheorem{remark}[num]{Remark}
\numberwithin{equation}{section}
\title{Semi-local behaviour of non-local hypoelliptic equations: Boltzmann}
\author{Amélie Loher}
\date{Dec 16, 2024}
\address[Amélie Loher]{DPMMS, University of Cambridge, Wilberforce road, Cambridge CB3 0WA, UK}
\email{ajl221@cam.ac.uk}
\begin{document}

\begin{abstract}
The purpose of this note is to demonstrate the announced result in \cite{AL-ls} by filling the gap in the proof sketch. We prove the semi-local Strong Harnack inequality for the Boltzmann equation for moderately soft potentials without cutoff assumption. The non-local operator in the Boltzmann equation is in non-divergence form, 
and thus the method developed in \cite{AL-div} does not apply. However, we exploit that the Boltzmann equation is on average in divergence form, and we show that the non-divergent part of the collision operator is of lower order in a suitable sense, which proves to be sufficient to deduce the Strong Harnack inequality.
Consequentially, we derive upper and lower bounds on the fundamental solution of the linearised Boltzmann equation. 
\end{abstract}

\keywords{Kinetic non-local equations, Harnack inequality, Boltzmann equation, a priori estimates}
\subjclass{45K05, 35H10, 	35Q20, 35B45, 35B65}

\maketitle
\tableofcontents

\section{Introduction}

The aim of this article is to establish the Strong Harnack inequality for the Boltzmann equation for moderately soft potentials without cutoff assumption. 
The Boltzmann equation models the dynamics of a dilute gas. It is given by
\begin{equation}\label{eq:boltzmann}
	\partial_t f + v \cdot \nabla_x f = Q(f, f),
\end{equation}
where $f: \R \times \R^d \times \R^d \to [0, \infty)$ encodes the density of the gas particles, which at any given time $t \in \R$ have location $x \in \R^d$ and velocity $v \in \R^d$. The right hand side $Q$ denotes the Boltzmann collision operator, whose explicit form is given by
\begin{equation}\label{eq:Q}
	Q(f, f) = \int_{\R^d} \int_{\mathbb S^{d-1}} \left[f(w_*)f(w) - f(v_*)f(v)\right]B(\abs{v-v_*}, \cos \theta) \dd \sigma \dd v_*,
\end{equation}
where $v, v_*$ are the post-collisional velocities, and $w, w_*$ the pre-collisional velocities, so that
\beqs
	w = \frac{v + v_*}{2} +\frac{\abs{v - v_*}}{2}\sigma, \quad w_* = \frac{v + v_*}{2} -\frac{\abs{v - v_*}}{2}\sigma.
\eeqs	
The rate of change in velocities is determined through the cross-section $B$, which reads
\beq\label{eq:B}
	B(r, \cos\theta) = r^\gamma b(\cos\theta), \quad b(\cos\theta) \approx \abs{\sin\left(\frac{\theta}{2}\right)}^{-(d-1)-2s},
\eeq
with parameters $\gamma \in (-d, 1]$ and $s \in (0, 1)$, and where $\cos \theta$ is defined as 
\beqs
	\cos \theta := \frac{v - v_*}{\abs{v-v_*}} \cdot\sigma, \quad \sin\left(\frac{\theta}{2}\right) := \frac{w -v}{\abs{w-v}}\cdot \sigma.
\eeqs
We restrict our analysis to moderately soft potentials, which means we consider the range of parameters $0 \leq \gamma + 2s \leq 2$ throughout this article, and moreover, we are interested in long-range interactions, which occur when we do not cut off the singularity in the kernel at sufficiently small deviation angles $\theta$.

\subsection{Boltzmann collision operator}

The Boltzmann equation is a kinetic integro-differential equation with a bounded, non-negative source term. To see this, we split the collision operator $Q$ into two parts 
\beqs
	Q(f, f) = Q_1(f, f) + Q_2(f, f), 
\eeqs
where 
\bals
	Q_1(f, f) = \int_{\R^d} \int_{\mathbb S^{d-1}} f(w_*)\left[f(w) - f(v)\right]B(\abs{v-v_*}, \cos \theta) \dd \sigma \dd v_*,
\eals	
and
\bals
	Q_2(f, f) = f(v)\left(\int_{\R^d} \int_{\mathbb S^{d-1}} \left[f(w_*) - f(v_*)\right]B(\abs{v-v_*}, \cos \theta) \dd \sigma \dd v_*\right).
\eals	
Then, as shown in \cite[Lemma 4.1]{IS59}, $Q_1$ can be rewritten as
\beqs
	Q_1(f, f) =  \int_{\R^d} \left[f(t, x, w) -f(t, x, v)\right] K_f(t, x, v, w) \dd w,
\eeqs
where the kernel $K_f$ depends implicitly on the solution $f$ and is given by
\beq\label{eq:boltzmann_kernel}
	K_f(v, w) = 2^{d-1} \abs{w - v}^{-1}\int_{w' \perp w - v} f(v + w') B(r, \cos \theta) r^{-d+2} \dd w',
\eeq
with $r^2 = \abs{w-v}^2 + \abs{w'}^2 = \abs{v-v_*}^2$ and $\cos\theta = \frac{w' - (v-w)}{\abs{w' - (v-w)}}\cdot\frac{w' +(w-v)}{\abs{w' + (w-v)}}$, $w_* = v + w'$, and $v_* = w + w'$. The proof is based on a change of variables. In particular, $Q_1$ is a non-local operator with a non-negative kernel $K_f$. 

Moreover, the term $Q_2$ can be viewed as a non-negative source term due to cancellation effects.
It is well known in the kinetic literature, as stated for example in \cite[Lemma 5.1]{IS59}, that $Q_2$ is of the form
\beqs
	Q_2(f, f) =  f(v)\left(\int_{\R^d}f(v-w') \tilde B(\abs{w'}) \dd w'\right),
\eeqs
where 
\beq\label{eq:tilde-B}
	\tilde B(r) := \int_{\mathbb S^{d-1}} \left(\frac{2^{\frac{d}{2}}}{(1-\sigma \cdot e)^{\frac{d}{2}}} B\left(\frac{\sqrt r}{\sqrt{1-\sigma \cdot e}}, \cos \theta\right) - B(r, \cos\theta)\right) \dd \sigma > 0.
\eeq
Here $e$ is any unit vector, and $\cos \theta = \sigma \cdot e$.

\subsubsection{Integro-differential equation with non-negative source}

We have therefore derived the integro-differential form of the Boltzmann equation with a non-negative source term: if $f$ solves \eqref{eq:boltzmann}, then 
\beq\label{eq:boltzmann-id}
	\partial_t f(t, x, v) + v \cdot \nabla_x f(t, x, v) = \int_{\R^d} \big(f(t, x, w)- f(t, x, v)\big) K_f(t, x, v, w) \dd w + \Lambda_f f(t, x, v),
\eeq
where $K_f$ is given by \eqref{eq:boltzmann_kernel}, and $\Lambda_f$ is determined by
\beq\label{eq:Lambda-f}
	\Lambda_f := \int_{\R^d}f(v-w') \tilde B(\abs{w'}) \dd w' \geq 0,
\eeq
where $\tilde B$ is defined in \eqref{eq:tilde-B}. 

\subsubsection{Integro-differential equation in divergence form on average}

There is, however, also another way we can think of the Boltzmann equation, which proves to be useful for the purpose of this article. If we reconsider the term $Q_2$, then we realise that the change of variables according to \cite[Lemma A.9]{IS} together with \eqref{eq:boltzmann_kernel} yields
\bal\label{eq:Q2}
	Q_2(f, f) &=  f(v)\left(\int_{\R^d} \int_{\mathbb S^{d-1}} \big(f(w_*) - f(v_*)\big)B(\abs{v-v_*}, \cos \theta) \dd \sigma \dd v_*\right)\\
	&= f(v)\left(2^{d-1} \int_{\R^d} \int_{w' \perp w - v} \abs{w-v}^{-1}  B(r, \cos\theta) r^{-d+2}  \big(f(v+w') -f(w+w') \big) \dd w' \dd w\right)\\
	&= f(v)\left(\int_{\R^d}\big(K_f(v, w)-K_f(w, v)\big) \dd w\right)\\
\eal
Thus any solution of \eqref{eq:boltzmann} satisfies
\bal\label{eq:boltzmann-div}
	\partial_t f(t, x, v) + v \cdot \nabla_x f(t, x, v) &= \int_{\R^d} \big(f(t, x, w)- f(t, x, v)\big) K_f(t, x, v, w) \dd w \\
	&\quad +f(t,x,v)\left(\int_{\R^d}\big(K_f(t, x, v, w)-K_f(t, x, w, v)\big) \dd w\right)\\
	&=\frac{1}{2} \int_{\R^d} \big(f(t, x, w)- f(t, x, v)\big) \big(K_f(t, x, v, w) + K_f(t, x, w, v) \big)\dd w \\
	&\quad +\frac{1}{2}\int_{\R^d} \big(f(t, x, w)+ f(t, x, v)\big)\big(K_f(t, x, v, w)-K_f(t, x, w, v)\big) \dd w.
\eal
The strength of this reformulation \eqref{eq:boltzmann-div} is the emphasis on the fact that \textit{on average, the Boltzmann equation is an equation in divergence form}.

\subsection{Conditional regime}

In either formulation, \eqref{eq:boltzmann-id} and \eqref{eq:boltzmann-div}, we see that the coefficients $K_f$ \eqref{eq:boltzmann_kernel} and $\Lambda_f$ \eqref{eq:Lambda-f} depend implicitly on the solution, stressing the non-linear character of the equation. Without any further assumptions on the solution itself, it seems out of reach to treat the non-linearity in the equation. In order to make quantitative statements on the coefficients, we need to work in a conditional regime \cite{ISglobal}. We thus assume throughout this article that the solutions we consider satisfy the following hydrodynamic bounds: for any $(t, x) \in \R_+ \times \R^d$, there exists $m_0, M_0, E_0, H_0$, such that
\bal\label{eq:hydro}
	0 < m_0 \leq &\int_{\R^d} f(t,x,v) \dd v \leq M_0,\\
	&\int_{\R^d} f(t, x, v) \abs{v}^2 \dd v \leq E_0, \\
	&\int_{\R^d} f(t, x, v) \log f(t, x, v) \dd v \leq H_0.
\eal
In other words, the mass, the energy and the entropy are bounded. Then, Silvestre showed \cite[Theorem 1.2]{IS59}, that there is a constant $C_0 > 0$ depending only on $m_0, M_0, E_0, H_0$ such that 
\beqs
	\norm{f}_{L^\infty(\R_+\times \R^d \times \R^d)} \leq C_0. 
\eeqs
This conditional a priori bound on the solution enables us to quantify an ellipticity class on the coefficients $K_f$ and $\Lambda_f$, and thus the non-linearity is treated by absorbing it in the macroscopic quantities. Let us remark that this conditional regime, even though it involves physically meaningful quantities, it is in our opinion too much to ask for it to be mathematically justifiable in the space-inhomogeneous case, since these are pointwise assumptions in the space variable $x$.

\subsection{Main result}

In this conditional regime, a series of works by Imbert-Silvestre(-Mouhot) \cite{IS, ISglobal, ISschauder, IMS} showed first that any solution of \eqref{eq:boltzmann} satisfies the Weak Harnack inequality, and is thus Hölder continuous \cite{IS}, then that the regularity of the kernel $K$ can be transferred onto the solution in the sense of Schauder estimates \cite{ISschauder}, and finally that these local results can be globalised via a change of variables \cite[Section 5]{ISglobal} due to the decay properties of the solution \cite{IMS}, so that eventually the Schauder estimates can be bootstrapped to obtain smooth solutions \cite{ISglobal}. Later a constructive proof of the Weak Harnack inequality and the Schauder estimates appeared in \cite{AL, AL-schauder}. One of the difficulties posed by \eqref{eq:boltzmann} is its non-local operator, which means that the behaviour of the solution inside a given domain is affected by the values attained in the whole velocity space. In contrast to equations with a local diffusion operator, we cannot deduce from these Hölder estimates a bound on the local supremum of the solution in terms of the local infimum. Such a bound is known as the Strong Harnack inequality. The problem is that the behaviour of the tail is encoded in the constant appearing in the bound. Even though it is known that a local Strong Harnack inequality holds for parabolic non-local equations \cite{chen-kumagai}, it turns out to fail in the kinetic case \cite{KW-failedH}. The combination of the kinetic transport operator with a fractional diffusion affects the local supremum. It was crucial for their counterexample that the equation was satisfied in a bounded velocity domain. We showed in \cite{AL-div}, however, that a local Harnack inequality without tail terms holds for \textit{global} solutions to non-local kinetic equations in divergence form, that is we need the equation to be satisfied for velocities in the whole space. 
Here we show how to obtain the Strong Harnack inequality for global solutions to the Boltzmann equation. We think assuming the equation to be satisfied globally in $v$ is reasonable from a physical point of view, since velocities of gas particles are  vector fields defined on the whole space. 
\begin{theorem}[Strong Harnack for Boltzmann]\label{thm:sh-be}
Let $T > 0$. 
Suppose $0 \leq f$ satisfies \eqref{eq:hydro}. 
Let $f$ solve \eqref{eq:boltzmann} in $(0,T) \times \R^d \times \R^d$ in the sense of Definition \ref{def:sol} with a cross section $B$ given by \eqref{eq:B} with $0 \leq \gamma +2s \leq 2$, and with initial data $f_0(x, v) = f(0, x, v)$, which in case that $\gamma \leq 0$ is supposed to admit for any $q > 0$ a constant $N_q > 0$ such that $f_0(x, v)\leq N_q(1+\abs{v})^{-q}$ for all $(x, v) \in \R^d \times \R^d$.
Then for any $0 < r_0 <  \frac{1}{6}$, there is $C > 0$ such that
\beqs
	\sup_{(\tau_0, \tau_1) \times Q^t_{\frac{r_0}{4}}} f \leq C\inf_{(\tau_2, \tau_3) \times Q^t_{\frac{r_0}{4}}} f,
\eeqs
where $0 < \tau_0 < \tau_1 < \tau_2 < \tau_3$ such that $\tau_1 - \tau_0 = \tau_2 - \tau_3 = \big(\frac{r_0}{4}\big)^{2s}$ and $\tau_2 - \tau_1 \geq r_0^{2s}$, and where $Q^t_{\frac{r_0}{4}} := \big\{(x, v) \in\R^{2d} : \abs{x} < \big(\frac{r_0}{4}\big)^{1+2s}, \abs{v} < \frac{r_0}{4}\big\}$ is the  kinetic cylinder sliced in time.
The constant $C$ depends only on $s, d, \gamma, m_0, M_0, E_0, H_0$, and in case that $\gamma \leq 0$, it depends additionally on $N_q$.
\end{theorem}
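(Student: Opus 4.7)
The plan is to leverage the ``divergence form on average'' reformulation \eqref{eq:boltzmann-div} and reduce the problem to the Strong Harnack inequality for global solutions to kinetic integro-differential equations in divergence form proved in \cite{AL-div}. Concretely, I would view the right-hand side of \eqref{eq:boltzmann-div} as a sum of three contributions: a symmetric divergence-form non-local operator with kernel $\tfrac12(K_f(v,w)+K_f(w,v))$ (the \emph{principal part}), an antisymmetric integral operator acting on $f(w)-f(v)$ with kernel $\tfrac12(K_f(v,w)-K_f(w,v))$ (a \emph{drift-type} term), and a multiplicative potential $f(v)\int(K_f(v,w)-K_f(w,v))\dd w$ (a \emph{zeroth-order} term). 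The principal part is in the divergence-form ellipticity class to which \cite{AL-div} applies, and the remaining two terms together encode exactly the non-divergent part of $Q$, which the paper advertises as being of lower order.

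Next I would justify the ellipticity class for the symmetric kernel. Using the representation \eqref{eq:boltzmann_kernel}, the hydrodynamic bounds \eqref{eq:hydro}, and Silvestre's conditional $L^\infty$ bound for $f$, one obtains upper and coercive lower bounds on $\tfrac12(K_f(v,w)+K_f(w,v))$ of order $|v-w|^{-d-2s}$ on a suitable family of non-degenerate cones; these are the standard quantitative estimates already established in \cite{IS59,IS,AL} and they depend only on $s, d, \gamma, m_0, M_0, E_0, H_0$ (and on $N_q$ when $\gamma \leq 0$ via the tail-decay propagation from \cite{IMS}). In particular, all constants entering the divergence-form machinery of \cite{AL-div} become explicit in these hydrodynamic parameters, which is what lets us invoke the Harnack inequality in that framework uniformly.

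The technical heart of the argument is to show that the two non-divergent terms are of strictly lower order than the principal part, in the quantitative sense needed to run the De Giorgi / Moser iteration of \cite{AL-div} as a perturbation. For the zeroth-order potential this amounts to an $L^\infty$ bound on $\int(K_f(v,w)-K_f(w,v))\dd w$ in terms of the hydrodynamic constants, via the representation of $\Lambda_f$ in \eqref{eq:Lambda-f}--\eqref{eq:tilde-B}. For the drift term the crucial observation is that the antisymmetric kernel integrates against $f(w)-f(v)$ to produce an operator of order $s$ rather than $2s$; by Cauchy--Schwarz and the coercivity of the symmetric part one can then absorb its bilinear form into the principal Dirichlet form up to a small $L^2$ remainder, and this fits precisely in the framework of lower-order perturbations tolerated by the divergence-form Harnack inequality. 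Once this is in place, the Strong Harnack inequality on sliced kinetic cylinders $Q^t_{r_0/4}$ follows directly from the corresponding statement of \cite{AL-div}, with the assumed globality in $v$ being exactly what rules out the $v$-tail counterexample of \cite{KW-failedH}.

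I expect the main obstacle to be the quantitative lower-order estimate of the antisymmetric bilinear form: showing, uniformly in the conditional regime, that its contribution is controlled by a small constant times the symmetric Dirichlet form plus an $L^2$ term, without losing any of the non-local structure needed to invoke \cite{AL-div}. A secondary technical point is to verify that the polynomial decay hypothesis on $f_0$ when $\gamma \leq 0$ propagates in $t$ with constants depending only on $N_q$, so that the kernel bounds are truly global in $v$; this should be handled by the decay estimates of \cite{IMS,ISglobal} together with the $L^\infty$ bound of \cite{IS59}.
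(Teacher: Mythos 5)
Your high-level strategy is in the right family, but there are two genuine gaps that prevent the argument from closing as written.

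First, you cannot invoke the Strong Harnack inequality of \cite{AL-div} as a black box. That theorem is proved for non-local kinetic equations whose operator is genuinely in divergence form, and its proof runs a De Giorgi/Moser-type iteration whose test-function computations use the divergence symmetry $K(v,w)=K(w,v)$ throughout (in the energy estimate, in the tail estimate on upper level sets, and in the $L^1$--$L^\infty$ bound). For Boltzmann the symmetric part $\tfrac12(K_f(v,w)+K_f(w,v))$ is the principal part, but the antisymmetric remainder does not simply become a ``lower-order perturbation tolerated by the theorem''; the theorem has no such perturbation hypothesis. What is actually needed is to re-derive the two key intermediate estimates --- the non-local-to-local tail bound (Proposition~\ref{prop:tail-bound}) and the $L^1$--$L^\infty$ bound (Proposition~\ref{prop:L1-Linfty}) --- using the divergence-on-average identity \eqref{eq:boltzmann-div} and then combine with the Weak Harnack inequality of \cite{IS} to conclude. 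Your proposal omits the role of Weak Harnack entirely; the final step of the paper is precisely the interpolation chain $\sup_{\mathcal I^-} f \lesssim \norm{f}_{L^1} \lesssim \delta \sup f + C(\delta)\norm{f}_{L^\zeta} \lesssim \delta \sup f + C(\delta)\inf_{\mathcal I^+} f$, and without the Weak Harnack for Boltzmann there is no passage from an $L^\zeta$ average to the infimum.

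Second, and more substantively, your ``the antisymmetric kernel is of order $s$'' heuristic is imprecise and does not by itself enable absorption into the Dirichlet form. The antisymmetric kernel $K_f(v,w)-K_f(w,v)$ still carries a $|v-w|^{-d-2s}$ singularity in general; the cancellations \eqref{eq:1.6}--\eqref{eq:1.7} control only principal-value integrals of it, which is not enough here. The condition that actually makes the De Giorgi estimates survive is the ratio bound
\[
\sup_{v\in B_r}\int_{B_r}\frac{|K_f(v,w)-K_f(w,v)|^2}{K_f(v,w)+K_f(w,v)}\dd w \;\leq\; \mu_0\, r^{2(\alpha-s)}, \quad \alpha>s,
\]
i.e.\ \eqref{eq:abs-K-skew}, due to Kassmann--Weidner. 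This is what allows Young's inequality to turn every antisymmetric bilinear form into a small multiple of the symmetric Dirichlet form plus an $L^\zeta$-type remainder (see \eqref{eq:I-concave-f}, \eqref{eq:I-cutoff-K}, \eqref{eq:I-cutoff-f}, \eqref{eq:I-concave-K}). Verifying \eqref{eq:abs-K-skew} for $K_f$ is itself non-trivial: it uses the Hölder continuity with fast polynomial decay of the conditional solutions, \eqref{eq:holder-fast}, together with the pointwise lower bound on $K_f$ over a non-degenerate cone from \cite{IS59}, and this is precisely why Definition~\ref{def:sol} imposes smoothness and rapid decay and why the hypothesis on $f_0$ for $\gamma\le 0$ appears. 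Your Cauchy--Schwarz intuition is pointing the right way, but without identifying the condition \eqref{eq:abs-K-skew} (and justifying it for the Boltzmann kernel) the ``absorption'' step has no quantitative content and the plan does not go through.
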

This is the first time a Strong Harnack inequality for the Boltzmann equation has been derived. Moreover, we can deduce consequences on bounds on the fundamental solution, similar to \cite[Theorem 1.2 and Theorem 1.3]{AL-div}. We refer the reader to Section \ref{sec:bounds-fundsol}.

\bigskip
\noindent {\bf Acknowledgements.} 
We are extremely grateful for the incredible support we get from Clément Mouhot, and for his confidence in our work that endures long after ours vanished.

\section{Setup}
We first describe the notion of solutions that we work with.
\subsection{Notion of solutions}
Since we require further regularity properties of the solution to deduce that the skew-symmetric part of $K_f$ is of lower order in the sense of \eqref{eq:abs-K-skew}, we need to work with the following notion of solutions.
\begin{definition}[Solutions]\label{def:sol}
We say that $f: \big(0, T\big) \times \R^d \times \R^d \to \R$ is a solution of \eqref{eq:boltzmann}, if
\begin{enumerate}
	\item $f$ is non-negative everywhere, 
	\item $f$ solves \eqref{eq:boltzmann} classically for every $(t, x, v)$,
	\item $f \in C^\infty$ in time, space, and velocity,
	\item For each value of $(t, x)$ the function $f$ decays rapidly as $\abs{v} \to \infty$, that is for any $q > 0$ 
		\beqs
			\lim_{\abs{v} \to \infty} \frac{f(t, x, v)}{(1+\abs{v})^q}  = 0
		\eeqs
		locally uniformly in $(t, x)$.
	\item $f$ is periodic in $x$, 
	\item $f$ conserves mass:
	\[
		\forall 0 \leq t_1 <  t_2 \leq T:\iint_{\R^{2d}}f(t_2) \dd x \dd v \leq \iint_{\R^{2d}}f(t_1) \dd x \dd v.
	\]
\end{enumerate}
\end{definition}
This notion of solutions was also used by Imbert-Silvestre in \cite{ISglobal}. 

For any solution in the sense of Definition \ref{def:sol} satisfying the assumption on the macroscopic quantities \eqref{eq:hydro}, we can quantify a notion of ellipticity on the coefficients.
\subsection{Ellipticity class}
For any $(t, x) \in \R_+ \times \R^d$, we can show that there exists $\lambda_0, \Lambda_0, \mu_0 > 0$ depending on $\gamma, s, d, m_0, M_0, E_0, H_0$ such that $K_f$ given in \eqref{eq:boltzmann_kernel} and $\Lambda_f$ given in \eqref{eq:Lambda-f} satisfy the following statements. 

We start by noting that $\Lambda_f$ given in \eqref{eq:Lambda-f} is non-negative, and can be bounded by a constant depending only on $\gamma, s, d$ and the macroscopic bounds \eqref{eq:hydro}, that is
\beq\label{eq:cancellation}
	0 \leq \Lambda_f \leq \Lambda_0.
\eeq
Concerning $K_f$, first, we assume a weak form of coercivity: for any $\varphi \in C_c^\infty(\R^d)$ there holds
\beq
	\int_{\R^d}\int_{\R^d} \big(\varphi(v)-\varphi(w)\big)^2K_f(v, w) \dd w\dd v \geq	\lambda_0 \int_{\R^d}\int_{\R^d}  \frac{\abs{\varphi(v)-\varphi(w)}^2}{\abs{v-w}^{d+2s}} \dd v\dd w.
\label{eq:coercivity}
\eeq
Second, we assume an upper bound on average: for any $r > 0$
\beq
	\forall v \in \R^d \quad \int_{B_r(v)}K_f(v,w)\abs{v -w}^2\dd w \leq \Lambda_0 r^{2-2s}.
\label{eq:upperbound}
\eeq
Note, in particular, that this assumption is equivalent to
\beq\label{eq:upperbound-2}
	\forall v \in \R^d \quad \int_{\R^d \setminus B_r(v)}K_f(v,w)\abs{v -w}^2\dd w \leq \Lambda_0 r^{-2s}.
\eeq
Third, we discuss the symmetry of the kernel. Note that the kernel $K_f$ satisfies a pointwise symmetry of the form $K_f(v, v+w) = K_f(v, v-w)$.
This symmetry assumption gives rise to a non-local operator in non-divergence form. However, the kernel \textit{does not satisfy a pointwise divergence form symmetry}: $K_f(v, w) = K_f(w, v)$. This implies that the non-local operator with kernel $K_f$ is not self-adjoint. It does, however, satisfy a weak divergence form symmetry:
\beq
	\forall v \in \R^d \quad \Bigg\vert \textrm{PV} \int_{\R^d} \big(K_f(v, w) - K_f(w, v)\big)\dd w\Bigg\vert \leq \Lambda_0,
\label{eq:1.6}
\eeq
and if $s \geq \frac{1}{2}$ we assume that for all $r > 0$
\beq
	\forall v \in \R^d \quad \Bigg\lvert \textrm{PV} \int_{B_r(v)} (v -w)\big(K_f(v, w) - K_f(w, v)\big)\dd w\Bigg\rvert \leq \Lambda_0 r^{1-2s}.
\label{eq:1.7}
\eeq
These conditions mean that the anti-symmetric part is bounded.
It seems that conditions \eqref{eq:1.6} and \eqref{eq:1.7} are not sufficient to deduce our main theorem. We also need to assume that the anti-symmetric part is of lower order with respect to the symmetric part: for any $0 < r \leq 1$ there is $\alpha > s$ such that
\beq\label{eq:abs-K-skew}
	\sup_{v \in B_r} \int_{B_r} \frac{\abs{K_f(v, w) - K_f(w, v)}^2}{(K_f(v, w) + K_f(w, v))} \dd w \leq \mu_0r^{2(\alpha-s)}.
\eeq
This condition has been used to derive Hölder continuity of solutions to parabolic non-local equations by Kassmann-Weidner \cite{KW-a}.

The assumptions \eqref{eq:cancellation}-\eqref{eq:abs-K-skew} are satisfied for any solution $f$ of \eqref{eq:boltzmann} in $(0, T) \times \R^d \times \R^d$, such that \eqref{eq:hydro} holds, and in case that $\gamma \leq 0$, provided that for any $q > 0$ there is $N_q > 0$ such that $f_0(x, v) = f(0, x, v) \leq N_q(1+\abs{v})^{-q}$ for $(x, v) \in \R^d \times \R^d$. To be more precise, note that $\Lambda_0$ depends additionally on $\langle v\rangle$. Since we are interested in a statement on a bounded domain, we do not make this dependence explicit.  
For the justifications, we refer the reader to \cite[Lemma 5.2]{IS59} for \eqref{eq:cancellation}, to \cite[Theorem 1.2]{chaker-silvestre} for \eqref{eq:coercivity}, to \cite[Lemma 3.5, Lemma 3.6, Lemma 3.7]{IS} for \eqref{eq:upperbound}, \eqref{eq:1.6} and \eqref{eq:1.7}, respectively.
For a justification of the last assumption, we use further regularity properties of solutions to \eqref{eq:boltzmann}, derived by Imbert-Mouhot-Silvestre in \cite{ISglobal, IS, IMSfrench}.

\subsection{On the non-divergent part of the kernel}\label{subsec:antisym-low}
We consider a solution $f$ of \eqref{eq:boltzmann} in $(0, T) \times \R^d \times \R^d$, periodic in $x$, such that \eqref{eq:hydro} holds. If $\gamma \leq 0$, then we also assume that for any $q > 0$ there is $N_q > 0$ such that $f_0(x, v) = f(0, x, v) \leq N_q(1+\abs{v})^{-q}$ for $(x, v) \in \R^d \times \R^d$. Then for every $q > 0$ there is a constant $C_{s,q} > 0$ and $\alpha > s$ depending only on $s, d, \gamma, m_0, M_0, E_0, H_0$ and $N_q$ in case that $\gamma \leq 0$, such that for any $R > 0$, any $\tau_0 > 0$, any $(t, x) \in (\tau_0, T) \times B_{R^{1+2s}}$ and any $v, w \in \R^d$ with $\abs{v-w} < \min\big\{\frac{R}{2}, 1\big\}$ there holds
\beq\label{eq:holder-fast}
	\abs{f(v) - f(w)} \leq C_{s,q} \abs{v-w}^{\alpha} \big(1+\abs{v}\big)^{-q} \approx C_{s, q} \abs{v-w}^{\alpha} \big(1+\abs{w}\big)^{-q}. 
\eeq
An even stronger statement is derived in \cite[Corollary 7.8]{ISglobal}. In fact, the constants $C_{s, q}$ and $\alpha$ may depend on $N_p$ for some fixed $p > q$.

We now consider $(t, x, v) \in Q_R$ for $0 < R \leq 1$. We recall \cite[Corollary 4.2]{IS59}: for any $v, w\in \R^d$ there is a universal constant $C_K > 0$ such that
\beqs
	K_f(v,w) = C_K \abs{v-w}^{-(d+2s)}\int_{w' \perp v - w} f(v+w') \abs{w'}^{\gamma + 2s+1} \dd w'.
\eeqs 
Thus for $v \in B_R$ we take $q > d+2s+\gamma$ and use \eqref{eq:holder-fast}, so that
\begin{align*}
	&\abs{K_f(v, w) -K_f(w, v)} \\
	&\quad\leq C_K\abs{v-w}^{-(d+2s)} \abs{\int_{w' \perp v - w} f(v+w') \abs{w'}^{\gamma + 2s+1} \dd w' - \int_{w' \perp w - v} f(w+w') \abs{w'}^{\gamma + 2s+1} \dd w'}\\
	&\quad\leq C_K\abs{v-w}^{-(d+2s)} \int_{w' \perp v - w} \abs{f(v+w') -f(w+w')}\abs{w'}^{\gamma + 2s+1} \dd w'\\
	&\quad\leq C_KC_{s,q} \abs{v-w}^{\alpha-(d+2s)} \int_{w' \perp v - w}  \big(1+\abs{w'}\big)^{-q}\abs{w'}^{\gamma + 2s+1} \dd w'.
\end{align*}
We then combine this with \cite[Lemma 4.8]{IS59}, which shows that for each $v \in B_R$ there is a symmetric subset of the unit sphere $A = A(v) \subset \partial B_1$, with strictly positive measure, and such that for each $\frac{v-w}{\abs{v-w}} \in A$, the kernel satisfies a pointwise lower bound $K_f(v, w) \geq \lambda \abs{v-w}^{-(d+2s)}$ for some $\lambda$ depending only on the hydrodynamical quantities in \eqref{eq:hydro} and $R$. Indeed, Lemma 4.6 in \cite{IS59} implies that there is $\ell > 0$, $m > 0$ and $r > 0$ such that 
\[	
	\abs{\{v : f(v) > \ell \} \cap B_r} \geq m. 
\]	
Thus if we introduce $S:= \{v : f(v) > \ell \} \cap B_r$, then $f(v) \geq \ell \chi_S(v)$ and thus $K_f(v, w) \geq \ell K_{\chi_{S}}(v, w)$. Since $S \subset B_r$, we find on the one hand,
\[
	\int_{w' \perp v - w}\left( \chi_{S}(v+w') + \chi_S(w+w')\right) \abs{w'}^{\gamma + 2s+1} \dd w' \leq 2(R+r)^{\gamma + 2s + 1} \abs{B_r}.
\]
On the other hand, $\abs{S} \geq m$. 
In particular, this implies for any $v \in B_R$ that $K_f(v, w) \geq \lambda \abs{v-w}^{-(d+2s)}$ whenever $\frac{v-w}{\abs{v-w}} \in A$. 
We find
\begin{align*}
	\int_{B_R} &\frac{\abs{K_f(v, w) - K_f(w, v)}^2}{(K_f(v, w) + K_f(w, v))} \dd w =\int_{B_R} \frac{\abs{K_f(v, w) - K_f(w, v)}^2}{(K_f(v, w) + K_f(w, v))} (1-\chi_{K_f(v, w)=0}\chi_{K_f(w, v)=0})\dd w \\
	&\leq \frac{1}{\ell}C_K^2 C_{s, q}^2 \int_{B_R}   \frac{(1-\chi_{K_f(v, w)=0}\chi_{K_f(w, v)=0})}{\abs{v-w}^{d+2(s-\alpha)}} \frac{\int_{w' \perp v - w}  \big(1+\abs{w'}\big)^{-q}\abs{w'}^{\gamma + 2s+1} \dd w'}{\int_{w' \perp v - w}\left( \chi_{S}(v+w') + \chi_S(w+w')\right) \abs{w'}^{\gamma + 2s+1} \dd w'} \dd w\\
	&\leq \frac{C }{\ell}C_K^2C_{s, q}^2\int_0^R \rho^{2(\alpha-s)-1}\int_{\partial B_1} \left(  \frac{ (1-\chi_{K_f(v, w)=0}\chi_{K_f(w, v)=0})}{\int_{w' \perp \sigma}\left( \chi_{S}(v+w') + \chi_S(w+w')\right) \abs{w'}^{\gamma + 2s+1} \dd w'} \right) \dd \sigma \dd \rho \\
	&\leq C(\ell, \lambda, r, m) C_K^2C_{s, q}^2  R^{2(\alpha-s)}. 
\end{align*}
The last inequality follows from the fact that as $\sigma$ swipes $\partial B_1$, the hyperplane perpendicular to $\sigma$ covers the whole space $\R^d$, and using that $\abs{S} \geq m$, we see that at least one of the indicator functions in the denominator is positive; combined with the boundedness of $S$ we get a lower bound on the denominator.

\subsection{Notation}
We define the kinetic domains for time, space and velocity respecting the scaling of the equations. On the one hand, equation \eqref{eq:boltzmann} is scaling-invariant. Specifically, for any $r \in [0, 1]$ the scaled function $$f_r(t, x, v) = f(r^{2s}t, r^{1+2s}x, rv)$$ satisfies the Boltzmann equation with scaled coefficients. On the other hand, \eqref{eq:boltzmann} is invariant under Galilean transformations $$z \to z_0 \circ z = (t_0 + t, x_0+x + tv_0, v_0 + v)$$ with $z_0 = (t_0, x_0, v_0)\in \R^{1+2d}$. If $f$ is a solution of \eqref{eq:boltzmann}, then its Galilean transformation $f_{z_0}(z) = f(z_0 \circ z)$ solves \eqref{eq:boltzmann} with correspondingly translated coefficients. In view of these invariances we define kinetic cylinders 
\beqs
	Q_r(z_0) := \left\{(t, x, v) : -r^{2s} \leq t - t_0 \leq 0, \abs{v - v_0} < r, \abs{x - x_0 -(t-t_0)v_0} < r^{1+2s}\right\},
\eeqs
for $r > 0$ and $z_0 = (t_0, x_0, v_0) \in \R^{1+2d}$.

\section{Tail bound on upper level sets}

In this section we establish the non-local-to-local bound from Proposition 3.1 in \cite{AL-div} for the Boltzmann equation. It is in this section that the weak divergence form structure of \eqref{eq:boltzmann-div} proves useful. Just as in the divergence form case, we derive this non-local-to-local bound on upper level sets. To this end, we consider $l \in \R_+$. We check using \eqref{eq:boltzmann-div} that if $f$ solves \eqref{eq:boltzmann}, then $f-l$ satisfies
\bal\label{eq:boltzmann-div-l}
	 \partial_t (f-l) + v \cdot \nabla_x (f-l)   &= \frac{1}{2} \int_{\R^d} \big((f-l)(w) -(f-l)(v)\big) \big(K_f(v, w)+K_f(w, v)\big) \dd w   \\
	&\quad+ \frac{1}{2} \int_{\R^d}  \big((f-l)(v)+(f-l)(w)\big) \big(K_f( v, w)-K_f(w, v)\big)  \dd w\\
	&\quad+ l \int_{\R^d}  \big(K_f( v, w)-K_f(w, v)\big)  \dd w\\
	&\geq \frac{1}{2} \int_{\R^d} \big((f-l)(w) -(f-l)(v)\big) \big(K_f(v, w)+K_f(w, v)\big) \dd w   \\
	&\quad+ \frac{1}{2} \int_{\R^d}  \big((f-l)(v)+(f-l)(w)\big) \big(K_f( v, w)-K_f(w, v)\big)  \dd w.
\eal
The last inequality follows due to the classical cancellation from \eqref{eq:Q2} and \eqref{eq:tilde-B} provided that $l \geq 0$.

To derive a tail bound on upper level sets, we use the formulation in \eqref{eq:boltzmann-div-l}. The idea is to construct a concave test function that localises only on the upper level sets. Then, the weak divergence form structure proves to be sufficient to make the terms on the lower level sets vanish. Moreover, the cross terms end up with a sign.
\begin{proposition}[Non-local-to-local bound]\label{prop:tail-bound}
For given $R > 0$, $l \in \R_+$, and any non-negative super-solution $f$ of \eqref{eq:boltzmann-div} in $[0, T] \times \R^d \times \R^d$ with a kernel $K_f$ that satisfies \eqref{eq:upperbound}, \eqref{eq:1.6}, \eqref{eq:1.7}, \eqref{eq:abs-K-skew}, and such that $f$ conserves mass, we can find for any $\zeta \in \left(0, 1\right)$, a constant $C > 0$ depending on $\zeta, s, d, \Lambda_0, \mu_0$ such that 
\bal\label{eq:L1-tail}
	\int_{Q_{\frac{3R}{4}}}\int_{\R^d \setminus B_R} (f-l)_+(w) \chi_{f > l}(v) &K_f(v, w) \dd w\dd v \\
	&\leq CR^{-2s} \sup_{Q_R} (f-l)_+^{1-\zeta}\int_{Q_R} (f-l)_+^{\zeta} \dd z.
\eal
\end{proposition}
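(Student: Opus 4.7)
The strategy mirrors \cite[Proposition~3.1]{AL-div}, exploiting the weak divergence structure of \eqref{eq:boltzmann-div-l}. Fix a regularisation parameter $\epsilon > 0$ to be sent to $0^+$, a kinetic cutoff $\eta \in C_c^\infty(Q_R)$ with $0 \leq \eta \leq 1$, $\eta \equiv 1$ on $Q_{3R/4}$, and $|\partial_t\eta + v \cdot \nabla_x \eta| \leq CR^{-2s}$, and let $\Psi_\epsilon$ be a smooth, increasing, concave primitive of $u \mapsto (u+\epsilon)^{\zeta-1}$ on $[0,\infty)$, so that $\Psi_\epsilon(u) \leq \zeta^{-1}u^\zeta$ while $\Psi_\epsilon'$ is bounded and decreasing. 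The plan is to test the super-solution inequality \eqref{eq:boltzmann-div-l} against the non-negative function $\phi(z) := \eta(z)\,\Psi_\epsilon'\bigl((f-l)_+(v)\bigr)$ and integrate over $z = (t,x,v) \in Q_R$, then split the result into four pieces: the kinetic transport side, the symmetric bilinear form on the local part $v,w \in B_R$, the symmetric tail contribution ($w \notin B_R$), and the anti-symmetric bilinear form.

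By the chain rule, the transport side equals $\int_{Q_R}\eta(\partial_t + v \cdot \nabla_x)\Psi_\epsilon((f-l)_+)\,dz$, and integration by parts in $(t,x)$ (using compact support of $\eta$ in $x$, and mass conservation together with $f \geq 0$ to handle the time boundary) bounds it by $CR^{-2s}\int_{Q_R}(f-l)_+^\zeta$. The symmetric local part is symmetrised in $(v,w) \in B_R^2$; expanding
\begin{equation*}
\phi(w) - \phi(v) = \eta(v)\bigl[\Psi_\epsilon'((f-l)_+(w)) - \Psi_\epsilon'((f-l)_+(v))\bigr] + \Psi_\epsilon'((f-l)_+(w))\bigl[\eta(w) - \eta(v)\bigr]
\end{equation*}
isolates a non-negative Carré-du-Champ energy $\mathcal{E}_{sym}$ (by concavity of $\Psi_\epsilon$) from a cutoff error, the latter absorbed into $\tfrac{1}{4}\mathcal{E}_{sym} + CR^{-2s}\int_{Q_R}(f-l)_+^\zeta$ via \eqref{eq:upperbound} and Young's inequality. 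For the symmetric tail, $\phi(w) = 0$ leaves only $\phi(v)$; writing $f(w) - f(v) = (f-l)_+(w) - (f-l)_-(w) - (f-l)_+(v)$ on $\{(f-l)_+(v) > 0\}$, the $(f-l)_+(w)$ term reproduces the target integral weighted by $\Psi_\epsilon'((f-l)_+(v)) \geq \bigl(\sup_{Q_R}(f-l)_+ + \epsilon\bigr)^{\zeta-1}$, while the remaining pieces (including $(f-l)_-(w) \leq l$) are absorbed via \eqref{eq:upperbound-2} into $CR^{-2s}\int_{Q_R}(f-l)_+^\zeta$, any $l$-dependent residue cancelling against the non-negative term dropped in \eqref{eq:boltzmann-div-l}.

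The main obstacle, absent from the divergence-form setting of \cite{AL-div}, is the anti-symmetric bilinear form. Symmetrising in $(v,w) \in B_R^2$ and applying Cauchy-Schwarz with the weight $K_f(v,w) + K_f(w,v)$, the local anti-symmetric contribution is controlled by $\mathcal{E}_{sym}^{1/2}$ times
\begin{equation*}
\left(\iint_{B_R^2} \bigl((f-l)(v) + (f-l)(w)\bigr)^2 \frac{|K_f(v,w) - K_f(w,v)|^2}{K_f(v,w) + K_f(w,v)}\,dw\,dv\right)^{1/2},
\end{equation*}
whose second factor is estimated using \eqref{eq:abs-K-skew} to supply the gain $R^{\alpha - s}$ with $\alpha > s$, strictly improving on the symmetric scale $R^{-s}$; Young's inequality then absorbs this piece into $\tfrac{1}{4}\mathcal{E}_{sym}$ plus a lower-order error compatible with the right-hand side of the claim. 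The anti-symmetric tail contribution is handled via \eqref{eq:1.6}. Collecting all bounds, rearranging, and sending $\epsilon \to 0^+$ yields the estimate. The delicate step is verifying that \eqref{eq:abs-K-skew} truly gives enough decay to absorb the full local anti-symmetric term into a fraction of $\mathcal{E}_{sym}$; this is precisely where the Hölder gain $\alpha > s$ established in Section \ref{subsec:antisym-low} enters critically.
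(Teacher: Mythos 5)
Your overall strategy is the right one -- test with a concave function to an inverse power, exploit the weak divergence structure to kill the lower-level-set contribution, Cauchy--Schwarz against $K_f(v,w)+K_f(w,v)$ together with \eqref{eq:abs-K-skew} for the anti-symmetric part, then extract the supremum -- but your choice of test function is wrong in a way that is not cosmetic. You test against $\phi(z)=\eta(z)\,\Psi_\epsilon'\bigl((f-l)_+(v)\bigr)$, which on the lower level set $\{f<l\}$ equals $\eta(z)\,\epsilon^{\zeta-1}$: still $\eta$-dependent. The paper instead tests against $\psi_l\,f_{l,\varepsilon}^{-(1-\zeta)}$ with $\psi_l=\eta^2\chi_{f>l}+\chi_{f<l}$, so that on $\{f<l\}$ the test function is $\epsilon^{-(1-\zeta)}$, a \emph{constant with no cutoff}. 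That specific choice is what makes the three delicate pieces of the argument close: (i) the symmetrised bilinear form restricted to $\{f(v)<l\}\cap\{f(w)<l\}$ vanishes identically because the test function difference vanishes there, giving $\mathcal{E}_{low}=0$; (ii) the cross term on $\{f(v)>l>f(w)\}$ has a definite sign because $\eta^2(v)\bigl((f-l)(v)+\epsilon\bigr)^{-(1-\zeta)}\leq\epsilon^{-(1-\zeta)}$ holds pointwise; and (iii) the $(f-l)_-$ contribution in the transport term reduces, via integration by parts with no cutoff factor, to a signed boundary-in-time term handled by mass conservation and periodicity.

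With your $\phi$ all three break. The lower-level-set bilinear form becomes
\[
\frac{\epsilon^{\zeta-1}}{2}\iint\bigl[(f-l)(v)K_f(w,v)-(f-l)(w)K_f(v,w)\bigr]\bigl[\eta(v)-\eta(w)\bigr]\chi_{low}(v,w)\,\dd w\,\dd v,
\]
which does not vanish (the bracket need not be anti-symmetric in a way that kills it) and is $O(\epsilon^{\zeta-1})$, divergent as $\epsilon\to0^+$. The cross-term sign comparison would require $\eta(v)\bigl((f-l)(v)+\epsilon\bigr)^{\zeta-1}\leq\eta(w)\,\epsilon^{\zeta-1}$ on $\{f(v)>l>f(w)\}$, which fails whenever $\eta(w)<\eta(v)$ -- in particular whenever $\eta(w)=0$, which is generic. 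And the transport term is not $\int_{Q_R}\eta\,\mathcal{T}\Psi_\epsilon((f-l)_+)$: you have dropped $\int\mathcal{T}(f-l)_-\,\eta\,\epsilon^{\zeta-1}$, which after integrating by parts yields $-\epsilon^{\zeta-1}\int(f-l)_-\,\mathcal{T}\eta$ plus time-boundary terms; the former is of size $l\,\epsilon^{\zeta-1}R^{-2s}|Q_R|$ and also diverges. Your remark that "any $l$-dependent residue cancels against the non-negative term dropped in \eqref{eq:boltzmann-div-l}" does not rescue this: that dropped term is bounded by $l\Lambda_0$ via \eqref{eq:1.6}, while your residues are of order $\epsilon^{\zeta-1}$, unbounded in $\epsilon$.

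Once the test function is corrected to $\psi_l\,f_{l,\varepsilon}^{-(1-\zeta)}$, the rest of your plan for the upper-level-set piece -- the signed Carré-du-Champ from concavity, the $\eta$-cutoff error via \eqref{eq:upperbound}, the Cauchy--Schwarz absorption of the anti-symmetric part via \eqref{eq:abs-K-skew} with the gain $R^{2(\alpha-s)}$, and the signed tail -- does match the paper's decomposition into $I^f_{concave}$, $I^K_{concave}$, $I^f_{cutoff}$, $I^K_{cutoff}$, and $\mathcal{E}_{up}^{tail}$. But as written, the proposal does not close.
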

The proof of this proposition relies on the following preliminary estimates. 
\begin{lemma}\label{lem:prelim}
Let $R > 0$ and $v_0 \in \R^d$. Let $\eta \in C_c^\infty(\R^d)$ be some smooth function with support in $B_R$. Let $f: \R^d \to \R$ be non-negative, $l \geq 0$, $f_{l,\varepsilon} = (f -l)_+ +\varepsilon$ for some $\varepsilon > 0$ and $\zeta \in (0, 1)$. 
Then there holds:
\begin{enumerate}[i.] 
\item First, we have for any $v, w$ in $\{f(v) > l \}\cap \{f(w) > l\}$
\beq\label{eq:3.1}
	\big[f(v) - f(w)\big] \Big[f_{l,\varepsilon}^{-(1-\zeta)}(v)-  f_{l,\varepsilon}^{-(1-\zeta)}(w)\Big] \leq - \frac{4(1-\zeta)}{\zeta^2}\Big(f^{\frac{\zeta}{2}}_{l, \varepsilon}(v) - f^{\frac{\zeta}{2}}_{l,\varepsilon}(w)\Big)^2.
\eeq
\item Second, for any $v, w$ in $\{f(v) > l\} \cap\{ f(w) > l\}$
\bal\label{eq:3.2}
	\min\Big\{f_{l, \varepsilon}^{-(1-\zeta)}(v), f_{l, \varepsilon}^{-(1-\zeta)}(w)\Big\} &\abs{f(v) - f(w)}\leq \frac{2}{\zeta} \max\left\{f_{l, \varepsilon}^{\frac{\zeta}{2}}(v), f_{l, \varepsilon}^{\frac{\zeta}{2}}(w)\right\}\abs{f_{l, \varepsilon}^{\frac{\zeta}{2}}(v) - f_{l, \varepsilon}^{\frac{\zeta}{2}}(w)},\\
\eal
\item Third, for any $v, w$ in $\{f(v) > l \}\cap \{f(w) > l\}$
\bal\label{eq:3.3}
	\min\{(f-l)_+(v), (f-l)_+(w)\} &\abs{f_{l, \varepsilon}^{-(1-\zeta)}(v)- f_{l, \varepsilon}^{-(1-\zeta)}(w)} \\
	&\leq \frac{\zeta}{2(1-\zeta)} \abs{f_{l, \varepsilon}^{\frac{\zeta}{2}}(v) - f_{l, \varepsilon}^{\frac{\zeta}{2}}(w)}\min\left\{f_{l, \varepsilon}^{\frac{\zeta}{2}}(v), f_{l, \varepsilon}^{\frac{\zeta}{2}}(w)\right\}.
\eal
\item Fourth, 
\beq\label{eq:3.4}
	\frac{1}{2} \left\vert\left(\eta f_{l, \varepsilon}^{\frac{\zeta}{2}}\right)(v) -\left(\eta  f_{l, \varepsilon}^{\frac{\zeta}{2}}\right)(w)\right\vert^2 - \big(\eta(v) - \eta(w)\big)^2  f_{l, \varepsilon}^\zeta(w) \leq \min\big\{\eta^2(v), \eta^2(w)\big\} \abs{ f_{l, \varepsilon}^{\frac{\zeta}{2}}(v) -  f_{l, \varepsilon}^{\frac{\zeta}{2}}(w)}^2.
\eeq
and 
\bal\label{eq:3.5}
	\frac{1}{2}\max\big\{\eta^2(v), \eta^2(w)\big\} & \abs{  f_{l, \varepsilon}^{\frac{\zeta}{2}}(v) -  f_{l, \varepsilon}^{\frac{\zeta}{2}}(w) }^2 \leq   \abs{ \left(\eta  f_{l, \varepsilon}^{\frac{\zeta}{2}}\right)(v) - \left(\eta  f_{l, \varepsilon}^{\frac{\zeta}{2}}\right)(w) }^2 + \big(\eta(v) - \eta(w)\big)^2  f_{l, \varepsilon}^\zeta(v).
\eal
\end{enumerate}
\end{lemma}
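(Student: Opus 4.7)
All four estimates in Lemma~\ref{lem:prelim} are pointwise algebraic assertions independent of the PDE, so the strategy is to fix $v, w$ and treat each as a two-variable calculus statement in $a := f_{l,\varepsilon}(v)$ and $b := f_{l,\varepsilon}(w)$. The unifying tool is the integral representation
\begin{equation*}
a^p - b^p = p \int_b^a t^{p-1}\, dt \qquad \text{for } 0 < b \leq a,\ p \neq 0,
\end{equation*}
which converts every difference of powers appearing in (i)--(iii) into an integral over a common interval so that different exponents can be compared through Cauchy--Schwarz or through monotonicity of the integrand.

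\textbf{Estimates (i)--(iii).} For (i), antisymmetry in $(v, w)$ reduces the proof to the case $a > b$, on which $f(v) - f(w) = a - b$. Writing the three occurring differences $a - b$, $b^{-(1-\zeta)} - a^{-(1-\zeta)}$, and $a^{\zeta/2} - b^{\zeta/2}$ via the integral representation above, I would apply Cauchy--Schwarz to the pair $\bigl(1,\, t^{\zeta/2 - 1}\bigr)$ on $[b, a]$ to obtain
\begin{equation*}
\bigl(a^{\zeta/2} - b^{\zeta/2}\bigr)^2 \leq \tfrac{\zeta^2}{4}(a-b)\int_b^a t^{\zeta-2}\, dt,
\end{equation*}
and rearranging gives (i) with the stated sharp constant $4(1-\zeta)/\zeta^2$. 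For (ii) and (iii) I would change variables by $t = s^{2/\zeta}$ in the integral representation of $a - b$ (for (ii)) and of $b^{-(1-\zeta)} - a^{-(1-\zeta)}$ (for (iii)). The resulting integrand is then either $s^{2/\zeta - 1}$, which is increasing and hence bounded by its value at the upper endpoint $\max\{a^{\zeta/2}, b^{\zeta/2}\}$, or $s^{-2/\zeta}$, which is decreasing and hence bounded by its value at the lower endpoint $\min\{a^{\zeta/2}, b^{\zeta/2}\}$. In (iii) one further absorbs the $\varepsilon$ by using $(f-l)_+(w) \leq f_{l,\varepsilon}(w) = b$ on the set where $f(w) > l$.

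\textbf{Estimate (iv).} These are discrete Leibniz bounds applied to $\phi := f_{l,\varepsilon}^{\zeta/2}$. The plan is to start from the two equivalent splittings
\begin{equation*}
(\eta\phi)(v) - (\eta\phi)(w) = \eta(v)\bigl(\phi(v) - \phi(w)\bigr) + \bigl(\eta(v) - \eta(w)\bigr)\phi(w) = \eta(w)\bigl(\phi(v) - \phi(w)\bigr) + \bigl(\eta(v) - \eta(w)\bigr)\phi(v),
\end{equation*}
combine them with the elementary inequality $(A + B)^2 \leq 2A^2 + 2B^2$ for \eqref{eq:3.4} and with $A^2 \leq 2(A+B)^2 + 2B^2$ (which is equivalent to $(A/\sqrt{2} + \sqrt{2}B)^2 \geq 0$) for \eqref{eq:3.5}, and use a short case analysis on the sign of $\eta(v) - \eta(w)$ to select the splitting that produces $\min\{\eta^2(v), \eta^2(w)\}$ on the right-hand side of the first bound and $\max\{\eta^2(v), \eta^2(w)\}$ on the left of the second.

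\textbf{Main obstacle.} No step is conceptually hard: the entire lemma is real-variable bookkeeping. The delicate point is matching the exact constants in (iii), where the absorption of $\varepsilon$ via $(f-l)_+(w) \leq b$ and the monotone bound on the integrand must be combined carefully to produce the factor $\min\{f_{l,\varepsilon}^{\zeta/2}(v), f_{l,\varepsilon}^{\zeta/2}(w)\}$ with the right coefficient, and similarly in (iv) to choose the splitting consistent with the side on which $\phi(w)$ appears in the stated right-hand side.
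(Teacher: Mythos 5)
Your integral-representation strategy is sound and genuinely different from the paper's: the paper simply cites \cite[Lemma 3.3]{AL-div} for \eqref{eq:3.2}, \eqref{eq:3.4}, \eqref{eq:3.5} and proves only \eqref{eq:3.3}, doing so via Cauchy's mean value theorem applied to $\tilde{\mathcal F}(\xi) = -\xi^{-\frac{\zeta}{2(1-\zeta)}}$, which turns the ratio of the two power differences into $\big(\tilde{\mathcal F}'(c)\big)^{-1}$ at an intermediate point and then uses monotonicity of $\tilde{\mathcal F}'$ to evaluate at an endpoint. Your unified route through $a^p - b^p = p\int_b^a t^{p-1}\,dt$, Cauchy--Schwarz for \eqref{eq:3.1}, and monotone integrand bounds for \eqref{eq:3.2}--\eqref{eq:3.3} is cleaner, self-contained, and gets the sharp constant in \eqref{eq:3.1}. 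One small slip: after $t = s^{2/\zeta}$, the integrand in \eqref{eq:3.3} is $s^{1-2/\zeta}$ (and in \eqref{eq:3.2} it is $s^{2/\zeta - 1}$), not $s^{-2/\zeta}$; it is still decreasing, so the conclusion stands.

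However, you flag the constant in \eqref{eq:3.3} as delicate and then treat it as achievable; if you carry your computation through you will find that it \emph{cannot} be matched. Both your method and the paper's give the constant $\frac{2(1-\zeta)}{\zeta}$, not the stated $\frac{\zeta}{2(1-\zeta)}$: with $a = f_{l,\varepsilon}(v) > b = f_{l,\varepsilon}(w)$ your bound reads $b^{-(1-\zeta)} - a^{-(1-\zeta)} \leq \frac{2(1-\zeta)}{\zeta}\, b^{\zeta/2 - 1}\big(a^{\zeta/2} - b^{\zeta/2}\big)$, and multiplying by $(f-l)_+(w) \leq b$ gives $\frac{2(1-\zeta)}{\zeta}\, b^{\zeta/2}\big(a^{\zeta/2} - b^{\zeta/2}\big)$. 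A Taylor check as $a \to b$ shows the stated constant $\frac{\zeta}{2(1-\zeta)}$ actually fails for all $\zeta < 2/3$, so this is a typo in the lemma (it propagates to the application in \eqref{eq:I-concave-K}, where the factor $\frac{\zeta}{1-\zeta}$ is used, but since everything is eventually absorbed into a generic constant $C$, the Proposition is unaffected).

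A second, more substantive issue is your plan for \eqref{eq:3.4} and \eqref{eq:3.5}. You propose a case analysis on $\mathrm{sign}\big(\eta(v) - \eta(w)\big)$ to select between the two Leibniz splittings so as to produce $\min\{\eta^2(v),\eta^2(w)\}$ in \eqref{eq:3.4} and $\max\{\eta^2(v),\eta^2(w)\}$ in \eqref{eq:3.5}. This cannot work, because the error term in each stated inequality carries $f_{l,\varepsilon}^\zeta$ at a \emph{fixed} argument ($w$ in \eqref{eq:3.4}, $v$ in \eqref{eq:3.5}), whereas switching the splitting also switches which argument appears there: the splitting $\eta(v)\big(\phi(v)-\phi(w)\big) + \big(\eta(v)-\eta(w)\big)\phi(w)$ ties $\eta^2(v)$ to $\phi^2(w)$, and the other splitting ties $\eta^2(w)$ to $\phi^2(v)$. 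As stated, \eqref{eq:3.4} is in fact false (take $\eta(v) = 2$, $\eta(w) = 1$, $f_{l,\varepsilon}^{\zeta/2}(v) = 1$, $f_{l,\varepsilon}^{\zeta/2}(w) = 0$: LHS $=2$, RHS $=1$), and likewise \eqref{eq:3.5}. The correct and provable pointwise statements replace $\min\{\eta^2(v),\eta^2(w)\}$ by $\eta^2(v)$ in \eqref{eq:3.4} and $\max\{\eta^2(v),\eta^2(w)\}$ by $\eta^2(w)$ in \eqref{eq:3.5}; these are exactly what is used in the proof of Proposition \ref{prop:tail-bound} (see the step from the second to the third display in \eqref{eq:I-concave-f}). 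So your machinery is right, but you should drop the case analysis and state the conclusions with the arguments tied as above, and note the constant correction in \eqref{eq:3.3}.
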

\begin{proof}[Proof of Lemma \ref{lem:prelim}]
For a proof of \eqref{eq:3.2}, \eqref{eq:3.4}, \eqref{eq:3.5} we refer the reader to \cite[Lemma 3.3]{AL-div}. It remains to establish \eqref{eq:3.3}.
	To prove \eqref{eq:3.3}, if we use $$\tilde{\mathcal F}(\xi) := -\xi^{-\frac{\zeta}{2}\frac{1}{(1-\zeta)}}, \qquad \textrm{with} \qquad \tilde{\mathcal F}'(\xi) := \frac{\zeta}{2} \frac{1}{1-\zeta} \xi^{\frac{-(2-\zeta) }{2(1-\zeta)}},$$
	then	for all $v, w$ in $\{f(v) > l \cap f(w) > l\}$, we note by Cauchy's mean value theorem
	\bals
		\frac{\abs{f_{l, \varepsilon}^{-(1-\zeta)}(v) - f_{l, \varepsilon}^{-(1-\zeta)}(w)}}{\Big\vert f_{l, \varepsilon}^{\frac{\zeta}{2}}(v) - f_{l, \varepsilon}^{\frac{\zeta}{2}}(w)\Big\vert} &= \frac{\abs{f_{l, \varepsilon}^{-(1-\zeta)}(v) - f_{l, \varepsilon}^{-(1-\zeta)}(w)}}{\abs{\tilde{\mathcal F}\big(f_{l, \varepsilon}^{-(1-\zeta)}(w)\big) - \tilde{\mathcal F}\big(f_{l, \varepsilon}^{-(1-\zeta)}(v)\big)}} \\
		&\leq \max\Bigg\{\frac{1}{\tilde{\mathcal F}'\big(f_{l, \varepsilon}^{-(1-\zeta)}(v)\big)}, \frac{1}{\tilde{\mathcal F}'\big(f_{l, \varepsilon}^{-(1-\zeta)}(w)\big)}\Bigg\}\\
		&\leq \frac{2(1-\zeta)}{\zeta} \max\Bigg\{f_{l, \varepsilon}^{-\frac{(2-\zeta)}{2}}(v), f_{l, \varepsilon}^{-\frac{(2-\zeta)}{2}}(w)\big)\Bigg\}.
	\eals
	Multiplied by $\min\{(f-l)_+(v), (f-l)_+(w)\}$ yields \eqref{eq:3.3}. 
\end{proof}

\begin{proof}[Proof of Proposition \ref{prop:tail-bound}]
The proof idea follows \cite[Proposition 3.1]{AL-div}. We do a concavity estimate on level sets by testing the equation with the level set of the solution to some inverse power. The cross terms are signed, since the concavity of the test function preserves the super-solution structure. The weak divergence form structure implies, moreover, that if we localise only on upper level sets, then the terms on the lower level sets vanish.

We consider $0 \leq \psi$ given by
\beqs
	\psi_l(t, x, v)  := \eta^2(t, x, v) \chi_{f > l}(t, x, v) + \chi_{f < l}(t, x, v),
\eeqs
for $l \in \R_+$, and where $\eta \in C_c^\infty(\R^{2d+1})$ is such that $\eta = 1$ in $Q_{\frac{3R}{4}}$ and $\eta = 0$ outside $Q_{\frac{7R}{8}}$. 
With a slight abuse in notation, we will write below $\psi(v)$, $\eta(v)$ and $f(v)$ for $\psi(t, x, v)$, $\eta(t, x, v)$ and $f(t, x, v)$, respectively. 
In particular, for any $v, w \in \R^d$ there holds:
\begin{enumerate}[i.]
\item\label{itm:pos} If $f(v), f(w) > l$ then $\psi_l(v) = \eta^2(v)$ and $\psi_l(w) = \eta^2(w)$. 
\item\label{itm:neg} If $f(v), f(w) < l$ then $\psi_l(v) = \psi_l(w) = 1$. 
\item\label{itm:cross-v-w} If $f(v) > l > f(w)$ then $\psi_l(v) = \eta^2(v) \leq 1 = \psi_l(w)$. 
\item\label{itm:cross-w-v} If $f(w) > l > f(v)$ then $\psi_l(w) = \eta^2(w) \leq 1 = \psi_l(v)$.
\end{enumerate}
We test \eqref{eq:boltzmann-div-l} with $f_{l, \varepsilon}^{-(1-\zeta)} \psi_l$ for any $l \in \R_+$ and $\varepsilon > 0$. 

\textit{Step 1: Non-local operator.}

We first consider for fixed $(t, x)$ the right hand side of \eqref{eq:boltzmann-div-l} after testing with $f_{l, \varepsilon}^{-(1-\zeta)} \psi_l$:
\bals
	\mathcal E&:= \mathcal E\left(f-l, \psi f_{l, \varepsilon}^{-(1-\zeta)}\right)\\
	&= \int_{\R^d}\int_{\R^d} \left[(f-l)(v)K_f(w, v)-(f-l)(w)K_f(v, w)\right] \psi_l(v) f_{l,\varepsilon}^{-(1-\zeta)}(v)\dd w   \dd v\\
	&= \frac{1}{2} \int_{\R^d}\int_{\R^d} \left[(f-l)(v)K_f(w, v)-(f-l)(w)K_f(v, w)\right]  \left[\psi_l(v) f_{l,\varepsilon}^{-(1-\zeta)}(v) -\psi_l(w) f_{l,\varepsilon}^{-(1-\zeta)}(w) \right]\dd w   \dd v.
\eals
We then split $\mathcal E$ into three parts: we introduce
\bals
	&\chi_{up}(v, w) := \chi_{f(v) > l} \chi_{f(w) > l}, \\
	&\chi_{low}(v, w) := \chi_{f(v) < l} \chi_{f(w) < l},\\
	&\chi_{cross}(v, w) := \chi_{f(v) > l} \chi_{f(w) < l}+\chi_{f(v) < l} \chi_{f(w) > l},
\eals 
so that
\bal\label{eq:E-split}
	\mathcal E&=  \int_{\R^d}\int_{\R^d} \left[(f-l)(v)K_f(w, v) -(f-l)(w)K_f(v, w)\right] \psi_l(v) f_{l,\varepsilon}^{-(1-\zeta)}(v) \chi_{up}(v, w) \dd w   \dd v \\
	&\quad +  \int_{\R^d}\int_{\R^d} \left[(f-l)(v)K_f(w, v) -(f-l)(w)K_f(v, w)\right]  \psi_l(v) f_{l,\varepsilon}^{-(1-\zeta)}(v) \chi_{low}(v, w) \dd w   \dd v \\
	&\quad +\int_{\R^d}\int_{\R^d} \left[(f-l)(v)K_f(w, v) -(f-l)(w)K_f(v, w)\right] \psi_l(v) f_{l,\varepsilon}^{-(1-\zeta)}(v) \chi_{cross}(v, w) \dd w   \dd v \\
	&=: \mathcal E_{up} +\mathcal E_{low} +\mathcal E_{cross}. 
\eal

\textit{Step 1-i.: Non-locality on lower level sets.}

As a consequence of the weak divergence form structure of \eqref{eq:boltzmann-div}, we see that the choice of $\psi_l$ implies through observation \ref{itm:neg}, that
\bal\label{eq:E-neg}
	\mathcal E_{low}  &=  \frac{1}{2}\int_{\R^d}\int_{\R^d} \left[(f-l)(v)K_f(w, v) -(f-l)(w)K_f(v, w)\right]  \\
	&\qquad \qquad \qquad \times\left[\psi_l(v) f_{l,\varepsilon}^{-(1-\zeta)}(v)-\psi_l(v) f_{l,\varepsilon}^{-(1-\zeta)}(w)\right] \chi_{low}(v, w) \dd w   \dd v\\
	&= 0.
\eal

\textit{Step 1-ii.: Cross non-locality.}

Moreover, we claim that the cross term has a sign: 
\bal\label{eq:E-cross}
	\mathcal E_{cross} &=\frac{1}{2} \int_{\R^d}\int_{\R^d} \left[(f-l)(v)K_f(w, v) -(f-l)(w)K_f(v, w)\right] \\
	&\qquad \qquad \qquad \times \left[\psi_l(v) f_{l,\varepsilon}^{-(1-\zeta)}(v) -\psi_l(w) f_{l,\varepsilon}^{-(1-\zeta)}(w) \right]\chi_{cross}(v, w) \dd w   \dd v \\
	&=\frac{1}{2} \int_{\R^d}\int_{\R^d} \left[(f-l)(v)K_f(w, v) -(f-l)(w)K_f(v, w)\right] \\
	&\qquad \qquad \qquad \times \left[\eta^2(v)(f-l+\varepsilon)^{-(1-\zeta)}(v) - \varepsilon^{-(1-\zeta)} \right]\chi_{f(v) > l > f(w)} \dd w   \dd v \\
	&\quad+\frac{1}{2} \int_{\R^d}\int_{\R^d} \left[(f-l)(v)K_f(w, v) -(f-l)(w)K_f(v, w)\right] \\
	&\qquad \qquad \qquad \times \left[\varepsilon^{-(1-\zeta)} - \eta^2(w)(f-l+\varepsilon)^{-(1-\zeta)}(w) \right]\chi_{f(w) > l > f(v)} \dd w   \dd v \\
	&\leq 0.
\eal
The last inequality uses, for the first integrand, that we are restricted to the case that $f(v) -l > 0$ and $-(f(w) -l) > 0$, combined with the fact that $\eta^2(v) (f-l+\varepsilon)^{-(1-\zeta)}(v) \leq \varepsilon^{-(1-\zeta)}$, since $f(v) > l$ and $\eta^2(v) \leq 1$. Similarly, we argue for the second integrand, after noticing that $(f-l)(v) < 0$ and $-(f-l)(w) < 0$.

\textit{Step 1-iii.: Non-locality on upper level sets.}

For $\mathcal E_{up}$ we find for any $(t, x) \in [-R^{2s}, 0] \times B_{R^{1+2s}}$ due to \ref{itm:pos} 
\bal\label{eq:E-pos}
	\mathcal E_{up} &=  \int_{\R^d}\int_{\R^d} \left[(f-l)(v)K_f(w, v) -(f-l)(w)K_f(v, w)\right] \psi_l(v) f_{l,\varepsilon}^{-(1-\zeta)}(v) \chi_{up}(v, w) \dd w   \dd v \\
	&=\int_{B_R}\int_{B_R} \left[(f-l)(v)K_f(w, v) -(f-l)(w)K_f(v, w)\right] \psi_l(v) f_{l,\varepsilon}^{-(1-\zeta)}(v) \chi_{up}(v, w)  \dd w \dd v\\
	&\quad + \int_{B_R}\int_{\R^d \setminus B_R} \left[(f-l)(v)K_f(w, v) -(f-l)(w)K_f(v, w)\right] \psi_l(v) f_{l,\varepsilon}^{-(1-\zeta)}(v) \chi_{up}(v, w) \dd w \dd v\\
	&=: \mathcal E_{up}^{loc} + \mathcal E_{up}^{tail}. 
\eal
For $\mathcal E_{up}^{tail}$, we note that there is no singularity, due to the choice of $\eta$, which vanishes outside $B_{\frac{7R}{8}}$. Thus we find due to \eqref{eq:upperbound-2}
\bal\label{eq:E-pos-tail}
	\mathcal E_{up}^{tail} &= \int_{B_R}\int_{\R^d \setminus B_R}\left[(f-l)(v)K_f(w, v) -(f-l)(w)K_f(v, w)\right] \eta^2(v) f_{l,\varepsilon}^{-(1-\zeta)}(v) \chi_{f(v) > l} \chi_{f(w)> l} \dd w \dd v\\
	&= \int_{B_R}\int_{\R^d \setminus B_R} (f-l)_+(v) \eta^2(v) f_{l,\varepsilon}^{-(1-\zeta)}(v) K_f(w, v)\chi_{f(v) > l} \chi_{f(w)> l} \dd w \dd v\\
	&\quad - \int_{B_R}\int_{\R^d \setminus B_R}(f-l)_+(w) \eta^2(v) f_{l,\varepsilon}^{-(1-\zeta)}(v) K_f(v, w)\chi_{f(v) > l} \chi_{f(w)> l} \dd w \dd v\\
	&\leq \Lambda_0 \left(\frac{R}{8}\right)^{-2s} \int_{B_R} ((f-l)_++\varepsilon)^{\zeta}(v) \chi_{f > l}(v) \dd v\\
	&\quad- \int_{B_R}\int_{\R^d \setminus B_R}(f-l)_+(w) \eta^2(v) f_{l,\varepsilon}^{-(1-\zeta)}(v) K_f(v, w)\chi_{f(v) > l} \dd w \dd v.
\eal
In particular, the tail of $(f-l)_+$, which corresponds to the last term in the above inequality, has a good sign.

To bound the not-too-non-local non-locality of $\mathcal E_{up}$, we exploit the concavity of the test function again, due to which we get a signed coercive term, and we show that the terms that have no sign have a singularity of lower order, and thus can be bounded. Concretely, using that $1 = \chi_{f(w) > f(v)} + \chi_{f(v) > f(w)}$, the symmetry of the integrand, adding and subtracting $f_{l,\varepsilon}^{-(1-\zeta)}(w)\eta^2(v)$, and finally adding and subtracting $f_l(v)K_f(v, w)$ and $f_l(w)K_f(w, v)$,
we find
\bal\label{eq:E-up-loc}
	\mathcal E_{up}^{loc} &=\frac{1}{2}\int_{B_R}\int_{B_R} \left[(f-l)(v)K_f(w, v) -(f-l)(w)K_f(v, w)\right] \\
	&\qquad \qquad \qquad \times\left[\eta^2(v) f_{l,\varepsilon}^{-(1-\zeta)}(v)-\eta^2(w) f_{l,\varepsilon}^{-(1-\zeta)}(w)\right] \chi_{up}(v, w)  \dd w \dd v\\
	&=\int_{B_R}\int_{B_R} \left[(f-l)(v)K_f(w, v) -(f-l)(w)K_f(v, w)\right] \\
	&\qquad \qquad \qquad \times \left[\eta^2(v) f_{l,\varepsilon}^{-(1-\zeta)}(v)-\eta^2(w) f_{l,\varepsilon}^{-(1-\zeta)}(w)\right] \chi_{up}(v, w) \chi_{f(w) > f(v)} \dd w \dd v\\
	&=\int_{B_R}\int_{B_R} \left[(f-l)(v)K_f(w, v) -(f-l)(w)K_f(v, w)\right]\\
	&\qquad \qquad \qquad \times \left[f_{l,\varepsilon}^{-(1-\zeta)}(v)- f_{l,\varepsilon}^{-(1-\zeta)}(w)\right] \eta^2(v) \chi_{up}(v, w) \chi_{f(w) > f(v)} \dd w \dd v\\
	&\quad +\int_{B_R}\int_{B_R} \left[(f-l)(v)K_f(w, v) -(f-l)(w)K_f(v, w)\right]\\
	&\qquad \qquad \qquad \times \left[\eta^2(v)- \eta^2(w)\right] f_{l,\varepsilon}^{-(1-\zeta)}(w) \chi_{up}(v, w) \chi_{f(w) > f(v)} \dd w \dd v\\
	&=\int_{B_R}\int_{B_R} \left[f_l(v) -f_l(w)\right] \left[f_{l,\varepsilon}^{-(1-\zeta)}(v)- f_{l,\varepsilon}^{-(1-\zeta)}(w)\right] \eta^2(v)K_f(v, w) \chi_{up} \chi_{f(w) > f(v)} \dd w \dd v\\
	&\quad+\int_{B_R}\int_{B_R} \left[K_f(w,v) -K_f(v, w)\right] f_l(v) \left[f_{l,\varepsilon}^{-(1-\zeta)}(v)- f_{l,\varepsilon}^{-(1-\zeta)}(w)\right] \eta^2(v) \chi_{up} \chi_{f(w) > f(v)} \dd w \dd v\\
	&\quad +\int_{B_R}\int_{B_R} \left[f_l(v) -f_l(w)\right] \left[\eta^2(v)- \eta^2(w)\right] f_{l,\varepsilon}^{-(1-\zeta)}(w) K_f(w, v)\chi_{up}\chi_{f(w) > f(v)} \dd w \dd v\\
	&\quad +\int_{B_R}\int_{B_R} \left[K_f(w, v) -K_f(v, w)\right] \left[\eta^2(v)- \eta^2(w)\right] f_l(w)f_{l,\varepsilon}^{-(1-\zeta)}(w) \chi_{up}\chi_{f(w) > f(v)} \dd w \dd v\\
	&=: I_{concave}^{f} + I_{concave}^{K} +I_{cutoff}^{f}+ I_{cutoff}^{K},
\eal
where we denoted $f_l = f - l$.

For $I^f_{concave}$ we use \eqref{eq:3.1}, \eqref{eq:3.4}, and \eqref{eq:upperbound}
\bals
	I^f_{concave} &\leq - \frac{4(1-\zeta)}{\zeta^2} \int_{B_R}\int_{B_R} \Big(f^{\frac{\zeta}{2}}_{l, \varepsilon}(v) - f^{\frac{\zeta}{2}}_{l,\varepsilon}(w)\Big)^2\eta^2(v)K_f(v, w) \chi_{up}(v, w) \chi_{f(w) > f(v)} \dd w \dd v\\
		&\leq \frac{4(1-\zeta)}{\zeta^2} \int_{B_R}\int_{B_R}f^{\zeta}_{l, \varepsilon}(w) \big(\eta(v) - \eta(w)\big)^2 K_f(v, w) \dd w \dd v \\
		&\quad - \frac{2(1-\zeta)}{\zeta^2} \int_{B_R}\int_{B_R} \Big(\big(\eta f^{\frac{\zeta}{2}}_{l, \varepsilon}\big)(v) - \big(\eta f^{\frac{\zeta}{2}}_{l,\varepsilon}\big)(w)\Big)^2K_f(v, w) \chi_{up}(v, w) \chi_{f(w) > f(v)} \dd w \dd v.
\eals
Then we use the upper bound \eqref{eq:upperbound}. Moreover, in order to absorb the remaining terms by the signed term, we distinguish the symmetric and the anti-symmetric part by adding and subtracting $K_f(w, v)$, then using Young's inequality and \eqref{eq:abs-K-skew}, so that for $\delta$ sufficiently small
\bal\label{eq:I-concave-f}
	I^f_{concave} &\leq C R^{2-2s} \norm{D_v\eta}^2_{L^\infty} \int_{B_R}f^{\zeta}_{l, \varepsilon}(w)\dd w \\
		&\quad - \frac{(1-\zeta)}{\zeta^2} \int_{B_R}\int_{B_R} \Big(\big(\eta f^{\frac{\zeta}{2}}_{l, \varepsilon}\big)(v) - \big(\eta f^{\frac{\zeta}{2}}_{l,\varepsilon}\big)(w)\Big)^2(K_f(v, w)+K_f(w, v))  \chi_{f(w) > f(v) >l} \dd w \dd v\\
		&\quad + \frac{(1-\zeta)}{\zeta^2} \int_{B_R}\int_{B_R} \Big(\big(\eta f^{\frac{\zeta}{2}}_{l, \varepsilon}\big)(v) - \big(\eta f^{\frac{\zeta}{2}}_{l,\varepsilon}\big)(w)\Big)^2(K_f(w, v)-K_f(v, w)) \chi_{f(w) > f(v) >l}  \dd w \dd v\\
		&\leq C R^{-2s}  \int_{B_R}f^{\zeta}_{l, \varepsilon}(w)\dd w \\
		&\quad - \frac{(1-\zeta)}{\zeta^2} \int_{B_R}\int_{B_R} \Big(\big(\eta f^{\frac{\zeta}{2}}_{l, \varepsilon}\big)(v) - \big(\eta f^{\frac{\zeta}{2}}_{l,\varepsilon}\big)(w)\Big)^2(K_f(v, w)+K_f(w, v))  \chi_{f(w) > f(v) >l} \dd w \dd v\\
		&\quad +\delta \int_{B_R}\int_{B_R} \Big(\big(\eta f^{\frac{\zeta}{2}}_{l, \varepsilon}\big)(v) - \big(\eta f^{\frac{\zeta}{2}}_{l,\varepsilon}\big)(w)\Big)^2(K_f(w, v)+K_f(v, w))  \chi_{f(w) > f(v) >l} \dd w \dd v\\
		&\quad +C(\delta, \zeta) \int_{B_R}\int_{B_R} \Big(\big(\eta f^{\frac{\zeta}{2}}_{l, \varepsilon}\big)(v) - \big(\eta f^{\frac{\zeta}{2}}_{l,\varepsilon}\big)(w)\Big)^2
	\frac{\abs{K_f(w, v)-K_f(v, w)}^2}{(K_f(v, w) + K_f(w, v))}  \chi_{f(w) > f(v) >l}  \dd w \dd v\\
	&\leq C R^{-2s} \int_{B_R}f^{\zeta}_{l, \varepsilon}(w)\dd w \\
		&\quad - \frac{(1-\zeta)}{2\zeta^2} \int_{B_R}\int_{B_R} \Big(\big(\eta f^{\frac{\zeta}{2}}_{l, \varepsilon}\big)(v) - \big(\eta f^{\frac{\zeta}{2}}_{l,\varepsilon}\big)(w)\Big)^2(K_f(v, w)+K_f(w, v)) \chi_{f(w) > f(v) >l} \dd w \dd v\\
		&\quad +C(\delta, \zeta) \int_{B_R}\eta^2(v) f^{\zeta}_{l, \varepsilon}(v)  \sup_{v \in B_R} \int_{B_R} 	\frac{\abs{K_f(w, v)-K_f(v, w)}^2}{(K_f(v, w) + K_f(w, v))} \dd w \dd v\\
		&\quad +C(\delta, \zeta) \int_{B_R}\eta^2(w) f^{\zeta}_{l, \varepsilon}(w)  \sup_{w \in B_R} \int_{B_R}  \frac{\abs{K_f(w, v)-K_f(v, w)}^2}{(K_f(v, w) + K_f(w, v))} \dd v \dd w\\
		&\leq C R^{-2s} \int_{B_R}f^{\zeta}_{l, \varepsilon}(w)\dd w \\
		&\quad - \frac{(1-\zeta)}{2\zeta^2} \int_{B_R}\int_{B_R} \Big(\big(\eta f^{\frac{\zeta}{2}}_{l, \varepsilon}\big)(v) - \big(\eta f^{\frac{\zeta}{2}}_{l,\varepsilon}\big)(w)\Big)^2(K_f(v, w)+K_f(w, v)) \chi_{f(w) > f(v) >l}  \dd w \dd v.
\eal

To bound $I_{cutoff}^K$ we use $\abs{\eta^2(v)- \eta^2(w)}  \leq \max\{\eta(v), \eta(w)\}\abs{\eta(v)-\eta(w)}$, $(f-l)_+ \leq f_{l, \varepsilon}$, Young's inequality, \eqref{eq:abs-K-skew}, and \eqref{eq:upperbound} 
\bal\label{eq:I-cutoff-K}
	I_{cutoff}^K &= \int_{B_R}\int_{B_R} \left[K_f(w, v) -K_f(v, w)\right] \left[\eta^2(v)- \eta^2(w)\right] (f-l)(w)f_{l,\varepsilon}^{-(1-\zeta)}(w)  \chi_{f(w) > f(v) >l}  \dd w \dd v\\
	&\leq\int_{B_R}\int_{B_R} \frac{\abs{K_f(w, v) -K_f(v, w)}^2}{(K_f(v, w) +K_f(w, v))} \max\{\eta^2(v), \eta^2(w)\} f_{l,\varepsilon}^{\zeta}(w)  \chi_{f(w) > f(v) >l} \dd w\dd v\\
	&\quad+ \int_{B_R}\int_{B_R}(K_f(w, v)+K_f(v, w)) \abs{\eta(v)- \eta(w)}^2 f_{l,\varepsilon}^{\zeta}(w)  \chi_{f(w) > f(v) >l}  \dd w \dd v\\
	&\leq C(\mu_0) \int_{B_R} f_{l,\varepsilon}^{\zeta}(w) \dd w+C R^{2-2s} \norm{D\eta}_{L^\infty}^2 \int_{B_R} f_{l,\varepsilon}^{\zeta}(w)  \dd w.
\eal

For $I_{cutoff}^f$, we use \eqref{eq:3.2}, Young's inequality, \eqref{eq:3.5}, the upper bound \eqref{eq:upperbound}, we add and subtract $K_f(w, v)$, we use Young's inequality again, and \eqref{eq:abs-K-skew}, so that 
\bal\label{eq:I-cutoff-f}
	I_{cutoff}^f &= \int_{B_R}\int_{B_R} \left[(f-l)(v) -(f-l)(w)\right] \left[\eta^2(v)- \eta^2(w)\right] f_{l,\varepsilon}^{-(1-\zeta)}(w) K_f(w, v) \chi_{f(w) > f(v) >l} \dd w \dd v\\		
	&\leq \frac{2}{\zeta}\int_{B_R}\int_{B_R} \abs{f_{l, \varepsilon}^{\frac{\zeta}{2}}(v) - f_{l, \varepsilon}^{\frac{\zeta}{2}}(w)} f_{l, \varepsilon}^{\frac{\zeta}{2}}(w)\abs{\eta^2(v)- \eta^2(w)} K_f(w, v) \chi_{f(w) > f(v) >l}  \dd w \dd v\\	
	&\leq\frac{\delta}{2}\int_{B_R}\int_{B_R} \abs{f_{l, \varepsilon}^{\frac{\zeta}{2}}(v) - f_{l, \varepsilon}^{\frac{\zeta}{2}}(w)}^2 \max\{\eta^2(v), \eta^2(w)\} K_f(w, v) \chi_{f(w) > f(v) >l}  \dd w \dd v\\	
	&\quad +C(\delta) \int_{B_R}\int_{B_R}f_{l, \varepsilon}^{\zeta}(w)\abs{\eta(v)- \eta(w)}^2 K_f(w, v)\chi_{up}(v, w) \chi_{f(w) > f(v)} \dd w \dd v\\	
	&\leq\delta\int_{B_R}\int_{B_R} \Big(\big(\eta f^{\frac{\zeta}{2}}_{l, \varepsilon}\big)(v) - \big(\eta f^{\frac{\zeta}{2}}_{l,\varepsilon}\big)(w)\Big)^2  K_f(w, v) \chi_{f(w) > f(v) >l}  \dd w \dd v\\	
	&\quad +\delta\int_{B_R}\int_{B_R} f_{l, \varepsilon}^{\zeta}(w) (\eta(v) - \eta(w))^2 K_f(w, v) \chi_{f(w) > f(v) >l}  \dd w \dd v\\
	&\quad +C(\delta)\norm{D_v \eta}_{L^\infty}^2 R^{2-2s} \int_{B_R}f_{l, \varepsilon}^{\zeta}(w)\dd w\\
	&\leq\frac{\delta}{2}\int_{B_R}\int_{B_R}\Big(\big(\eta f^{\frac{\zeta}{2}}_{l, \varepsilon}\big)(v) - \big(\eta f^{\frac{\zeta}{2}}_{l,\varepsilon}\big)(w)\Big)^2  (K_f(v, w) + K_f(w, v)) \chi_{f(w) > f(v) >l}  \dd w \dd v\\	
	&\quad+\frac{\delta}{2}\int_{B_R}\int_{B_R} \Big(\big(\eta f^{\frac{\zeta}{2}}_{l, \varepsilon}\big)(v) - \big(\eta f^{\frac{\zeta}{2}}_{l,\varepsilon}\big)(w)\Big)^2 (K_f(w, v) -K_f(v, w)) \chi_{f(w) > f(v) >l} \dd w \dd v\\	
	&\quad +C(\delta) R^{-2s} \int_{B_R}f_{l, \varepsilon}^{\zeta}(w)\dd w\\
	&\leq\frac{\delta}{2}\int_{B_R}\int_{B_R}\Big(\big(\eta f^{\frac{\zeta}{2}}_{l, \varepsilon}\big)(v) - \big(\eta f^{\frac{\zeta}{2}}_{l,\varepsilon}\big)(w)\Big)^2(K_f(v, w) + K_f(w, v)) \chi_{f(w) > f(v) >l}  \dd w \dd v\\	
	&\quad +C(\delta)\int_{B_R}\int_{B_R} \Big(\big(\eta f^{\frac{\zeta}{2}}_{l, \varepsilon}\big)(v) - \big(\eta f^{\frac{\zeta}{2}}_{l,\varepsilon}\big)(w)\Big)^2  \frac{\abs{K_f(v, w) - K_f(w, v)}^2}{(K_f(v, w) + K_f(w, v))} \chi_{f(w) > f(v) >l} \dd w \dd v\\
	&\quad +\frac{\delta}{2} \int_{B_R}\int_{B_R}\Big(\big(\eta f^{\frac{\zeta}{2}}_{l, \varepsilon}\big)(v) - \big(\eta f^{\frac{\zeta}{2}}_{l,\varepsilon}\big)(w)\Big)^2 (K_f(v, w) + K_f(w, v))  \chi_{f(w) > f(v) >l} \dd w \dd v\\
	&\quad +C(\delta) R^{-2s} \int_{B_R}f_{l, \varepsilon}^{\zeta}(w)\dd w\\
	&\leq\delta\int_{B_R}\int_{B_R} \Big(\big(\eta f^{\frac{\zeta}{2}}_{l, \varepsilon}\big)(v) - \big(\eta f^{\frac{\zeta}{2}}_{l,\varepsilon}\big)(w)\Big)^2(K_f(v, w) + K_f(w, v)) \chi_{f(w) > f(v) >l}  \dd w \dd v\\	
	&\quad +C(\delta) R^{-2s} \int_{B_R}f_{l, \varepsilon}^{\zeta}(w)\dd w.
\eal

Finally, to bound $I_{concave}^K$, we use \eqref{eq:3.3}, Young's inequality, \eqref{eq:abs-K-skew}, \eqref{eq:3.5}, and the upper bound \eqref{eq:upperbound}
\bal\label{eq:I-concave-K}
	I_{concave}^K&= \int_{B_R}\int_{B_R} \left[K_f(w,v) -K_f(v, w)\right] \left[f_{l,\varepsilon}^{-(1-\zeta)}(v)- f_{l,\varepsilon}^{-(1-\zeta)}(w)\right] f_l(v)\eta^2(v)  \chi_{f(w) > f(v) >l} \dd w \dd v\\
	&\leq \frac{\zeta}{(1-\zeta)}\int_{B_R}\int_{B_R} \abs{K_f(w,v) -K_f(v, w)} \abs{f_{l, \varepsilon}^{\frac{\zeta}{2}}(v) - f_{l, \varepsilon}^{\frac{\zeta}{2}}(w)} f_{l, \varepsilon}^{\frac{\zeta}{2}}(v)\eta^2(v) \chi_{f(w) > f(v) >l} \dd w \dd v\\
	&\leq C(\delta)\int_{B_R}\int_{B_R} \frac{\abs{K_f(w,v) -K_f(v, w)}^2}{(K_f(v, w)+ K_f(w, v))}f_{l, \varepsilon}^{\zeta}(v)\eta^2(v) \chi_{f(w) > f(v) >l} \dd w \dd v\\
	&\quad + \frac{\delta}{2}\int_{B_R}\int_{B_R} (K_f(v, w)+ K_f(w, v))\abs{f_{l, \varepsilon}^{\frac{\zeta}{2}}(v) - f_{l, \varepsilon}^{\frac{\zeta}{2}}(w)}\eta^2(v) \chi_{f(w) > f(v) >l}  \dd w \dd v\\
	&\leq C\int_{B_R}f_{l, \varepsilon}^{\zeta}(v)  \dd v \\
	&\quad+ \delta\int_{B_R}\int_{B_R} (K_f(v, w)+ K_f(w, v))\Big(\big(\eta f^{\frac{\zeta}{2}}_{l, \varepsilon}\big)(v) - \big(\eta f^{\frac{\zeta}{2}}_{l,\varepsilon}\big)(w)\Big)^2 \chi_{f(w) > f(v) >l}  \dd w \dd v\\
	&\quad+\delta\int_{B_R}\int_{B_R} (K_f(v, w)+ K_f(w, v))\big(\eta(v) - \eta(w)\big)^2  f_{l, \varepsilon}^\zeta(v) \chi_{f(w) > f(v) >l}  \dd w \dd v\\
	&\leq C \int_{B_R} f_{l,\varepsilon}^{\zeta}(v) \dd v+ C R^{2-2s} \norm{D_v \eta}_{L^\infty}^2 \int_{B_R} f_{l,\varepsilon}^{\zeta}(w) \dd w\\
	&\quad + \delta\int_{B_R}\int_{B_R} (K_f(v, w)+ K_f(w, v))\Big(\big(\eta f^{\frac{\zeta}{2}}_{l, \varepsilon}\big)(v) - \big(\eta f^{\frac{\zeta}{2}}_{l,\varepsilon}\big)(w)\Big)^2 \chi_{f(w) > f(v) >l}  \dd w \dd v.
\eal

If we combine \eqref{eq:E-up-loc} with \eqref{eq:I-concave-f}, \eqref{eq:I-cutoff-K}, \eqref{eq:I-cutoff-f}, \eqref{eq:I-concave-K}, we find for $\delta$ sufficiently small
\bal\label{eq:E-pos-loc}
	\mathcal E_{up}^{loc} &\leq C R^{-2s} \int_{B_R}f^{\zeta}_{l, \varepsilon}(w)\dd w \\
	&\quad - \frac{(1-\zeta)}{2\zeta^2} \int_{B_R}\int_{B_R} \Big(\big(\eta f^{\frac{\zeta}{2}}_{l, \varepsilon}\big)(v) - \big(\eta f^{\frac{\zeta}{2}}_{l,\varepsilon}\big)(w)\Big)^2(K_f(v, w)+K_f(w, v)) \chi_{f(w) > f(v) > l} \dd w \dd v\\
	&\quad +2\delta\int_{B_R}\int_{B_R} \left(\big(\eta f_{l, \varepsilon}^{\frac{\zeta}{2}}\big)(v) - \big(\eta f_{l, \varepsilon}^{\frac{\zeta}{2}}\big)(w)\right)^2  (K_f(v, w) + K_f(w, v)) \chi_{f(w) > f(v) > l} \dd w \dd v\\	
	&\leq C R^{-2s} \int_{B_R}f^{\zeta}_{l, \varepsilon}(w)\dd w.
\eal
In particular, this implies together with \eqref{eq:E-pos} and \eqref{eq:E-pos-tail} 
\bal\label{eq:E-up}
	\mathcal E_{up} \leq C R^{-2s} \int_{B_R}f^{\zeta}_{l, \varepsilon}(w)\dd w -\int_{B_R}\int_{\R^d \setminus B_R}(f-l)_+(w) \eta^2(v) f_{l,\varepsilon}^{-(1-\zeta)}(v) K_f(v, w)\chi_{f(v) > l} \dd w \dd v.
\eal

\textit{Step 1-iv.: Non-locality.}

We conclude from \eqref{eq:E-split}, \eqref{eq:E-neg}, \eqref{eq:E-cross}, \eqref{eq:E-up}, 
\bal\label{eq:E}
	\mathcal E = \mathcal E_{up} + \mathcal E_{low} + \mathcal E_{cross} &\leq C(\zeta, \Lambda_0, \mu_0) R^{-2s} \int_{B_R}(f-l + \varepsilon)^{\zeta}(v)\chi_{f > l} \dd v\\
	&\quad- \int_{B_R}\int_{\R^d \setminus B_R}(f-l)_+(w) \eta^2(v) f_{l,\varepsilon}^{-(1-\zeta)}(v) K_f(v, w)\chi_{f(v) > l} \dd w \dd v.
\eal
Note that this estimate holds true for any $(t, x)$. Moreover, since the only positive contribution stems from $\mathcal E_{up}$, the estimate in \eqref{eq:E} is localised in time and space.

\textit{Step 2.: Transport.}

Finally we use the equation. For the transport operator, we first note that since $f-l = (f-l)_+ + (f-l)_-$ and since $f$ conserves mass, there holds
\begin{align*}
	 \iint_{\R^d\times\R^d}&\left( (f-l)_-(T) - (f-l)_-(0)\right) \dd x \dd v \\
	&=  \iint_{\R^d\times\R^d}\left( (f-l)(T) - (f-l)(0)\right) \dd x \dd v -\iint_{\R^d\times\R^d} \left((f-l)_+(T) - (f-l)_+(0) \right)\dd x \dd v\\
	&=-\iint_{\R^d\times\R^d}\left( (f-l)_+(T) - (f-l)_+(0)\right) \dd x \dd v. 
\end{align*}
Now, we distinguish for any $x, v$ the case when $f(0) \geq \min(l, f(T))$, or $f(0) < \min(l, f(T)) \leq \max(l, f(T))$. In the former case, if $f(T) > l$, then $f(0) \geq l$, and the integral vanishes. Else, $f(T) \leq l$, so that $f(0) \geq f(T)$ and $(f-l)_-(T) - (f-l)_-(0) \leq f(T) - l  + l - f(0) \leq 0$. Thus,
\[
	\iint_{\R^d\times\R^d}\left( (f-l)_-(T) - (f-l)_-(0)\right) \dd x \dd v \leq 0. 
\]
In the latter case, when $f(0) \leq \max(l, f(T))$, if $f(T) \geq l$ then $f(0) \leq f(T)$ and $(f-l)_+(0) -f(T) + l\leq f(0) - f(T) \leq 0$ so that 
\begin{align*}
	\iint_{\R^d\times\R^d}\left( (f-l)_-(T) - (f-l)_-(0)\right) \dd x \dd v =-\iint_{\R^d\times\R^d}\left( (f-l)_+(T) - (f-l)_+(0)\right) \dd x \dd v \leq 0.
\end{align*}
Else if $f(T) < l$, then $f(0) \leq l$ and the integral vanishes. Combining this with the divergence theorem, integration by parts and using that $\abs{\mathcal T \eta} \sim R^{-2s}$, we get
\bal\label{eq:transport}
	\int_{[0, T] \times \R^{2d}} \mathcal T f(z)\psi_l(z) f_{l, \varepsilon}^{-(1-\zeta)}(z)\dd z &= \int_{[0, T] \times\R^{2d}} \mathcal T f(z) \eta^2(z)\chi_{f > l}(v) f_{l, \varepsilon}^{-(1-\zeta)}(z)\dd z\\ 
	&\quad + \int_{[0, T] \times\R^{2d}} \mathcal T f(z) \chi_{f < l}(z) \varepsilon^{-(1-\zeta)}\dd z\\
	&=\frac{1}{\zeta} \int_{\R^{1+2d}} \mathcal T ((f-l)_+ + \varepsilon)^{\zeta} (z) \eta^2(z) \dd z\\ 
	&\quad + \int_{[0, T] \times\R^{2d}} \mathcal T (f-l)_-(z)  \varepsilon^{-(1-\zeta)}\dd z\\
	&= - \frac{1}{\zeta} \int_{\R^{1+2d}} ((f-l)_+ + \varepsilon)^{\zeta} (z) \mathcal T \eta^2(z) \dd z\\ 
	&\quad + \int_{\R^{2d}}  \Big((f-l)_-(T, x, v) - (f-l)_-(0, x, v) \Big) \varepsilon^{-(1-\zeta)}\dd x \dd v\\
	&\leq C(\zeta) R^{-2s} \int_{Q_R} ((f-l)_+ + \varepsilon)^{\zeta} (z) \dd z.
\eal
Thus by \eqref{eq:boltzmann-div-l}, \eqref{eq:transport}, and \eqref{eq:E}
\bals
	0 &\leq \int_{[0, T] \times \R^{2d}} \mathcal T f(z)  \psi_l(z) f_{l, \varepsilon}^{-(1-\zeta)}(z)\dd z + \int_{[0, T] \times \R^{2d}}\mathcal E\left(f-l, \psi_l f_{l, \varepsilon}^{-(1-\zeta)}\right) \dd x \dd t\\
	&\leq C(\zeta) R^{-2s}\int_{Q_R}  (f-l+\varepsilon)^{\zeta}(z) \eta^2(z)\chi_{f > l}(z)\dd z\\
	&\quad- \int_{\R^{1+d}}\int_{B_R}\int_{\R^d \setminus B_R}(f-l)_+(w) \eta^2(z) f_{l,\varepsilon}^{-(1-\zeta)}(v) K_f(v, w)\chi_{f(v) > l}  \dd w \dd z,
\eals
or rearranged
\bals
	\int_{Q_{\frac{3R}{4}}}\int_{\R^d \setminus B_R}(f-l)_+(w)  f_{l,\varepsilon}^{-(1-\zeta)}(v) K_f(v, w)\chi_{f(v) > l}  \dd w \dd z\leq C R^{-2s}\int_{Q_R}  (f-l+\varepsilon)^{\zeta}(z) \eta^2(v)\chi_{f > l}(v)\dd z.
\eals

\textit{Step 3.: Conclusion.}

This implies \eqref{eq:L1-tail}, if we let $\varepsilon \to 0$, take out the infimum of $(f-l)^{-(1-\zeta)}$ over $z \in Q_{\frac{3R}{4}} \cap \chi_{f > l}$ on the left hand side, divide the estimate by the infimum (note $f - l \in L^\infty(Q_R)$), and use $(\inf g)^{-1} = \sup g^{-1}$, so that 
\bals
	\int_{Q_{\frac{3R}{4}}}\int_{\R^d \setminus B_R} (f-l)_+(w) \chi_{f > l}(v) K_f(v, w) \dd w\dd v \leq CR^{-2s} \sup_{Q_{\frac{3R}{4}} \cap f > l} (f-l)^{1-\zeta} \int_{Q_R} (f-l)_+^{\zeta} \dd z.
\eals
\end{proof}

\section{Linear $L^1$ to $L^\infty$ bound}

We use a De Giorgi argument in $L^1$ to derive a linear $L^1$-$L^\infty$ bound for solutions to \eqref{eq:boltzmann}. 
\begin{proposition}[$L^1$-$L^\infty$ bound]\label{prop:L1-Linfty}
Let $0 < R \leq1$, and let $f \in L^2([-3, 0]\times B_1; L^\infty(\R^d) + H^s(B_1))$ be a non-negative solution to \eqref{eq:boltzmann-id} in $Q_R$ with a kernel $K_f$ and a coefficient $\Lambda_f$ that satisfy \eqref{eq:cancellation}, \eqref{eq:coercivity}, \eqref{eq:upperbound}, \eqref{eq:1.6}, \eqref{eq:1.7}, \eqref{eq:abs-K-skew}.
Then there exists a large constant $C > 1$ depending on $s, d, \lambda_0, \Lambda_0, \mu_0$ such that 
\beqs
	 \textrm{for a.e. $z_1 \in Q_{\frac{R}{8}}$:} \qquad  f(z_1) \leq  C R^{-(2d(1+s) + 2s)}\int_{Q_R} f(z) \dd z.
\eeqs
\end{proposition}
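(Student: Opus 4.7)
The plan is to run a De Giorgi iteration on upper level sets in shrinking kinetic cylinders, combined with the non-local-to-local tail bound from Proposition \ref{prop:tail-bound}, and then to linearise the resulting nonlinear $L^p$–$L^\infty$ estimate by a Bombieri-Giusti–type argument. Set $r_k = \tfrac{R}{8}(1 + 2^{-k})$ so that $r_k \searrow R/8$ from $R/4$, and choose levels $l_k = M(1 - 2^{-k})$ with $M > 0$ to be determined. Let $\eta_k$ be a smooth kinetic cutoff supported in $Q_{r_{k-1}}$, equal to $1$ on $Q_{r_k}$, and satisfying $|\mathcal T \eta_k| + |\nabla_v \eta_k|^2 \lesssim 4^{k} R^{-2s}$. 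Write $f_k = (f - l_k)_+$ and $A_k = Q_{r_k} \cap \{f > l_k\}$.

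First I derive the energy inequality for $f_k$. Testing \eqref{eq:boltzmann-id} against $f_k \eta_{k-1}^{2}$ and integrating over $Q_{r_{k-1}}$, the transport operator produces an $L^\infty_t L^2_{x,v}$ control on $f_k \eta_{k-1}$, the coercivity \eqref{eq:coercivity} yields the Gagliardo $H^s_v$ seminorm of $f_k \eta_{k-1}$, the source $\Lambda_f f$ is absorbed using \eqref{eq:cancellation}, and the non-local part outside $B_{r_{k-1}}$ is controlled by Proposition \ref{prop:tail-bound}. The non-divergent correction of $K_f$ is handled exactly as in Section \ref{subsec:antisym-low}: after adding and subtracting $K_f(w,v)$, Young's inequality together with \eqref{eq:abs-K-skew} absorbs the skew-symmetric contribution into the coercive term with a small prefactor, leaving a remainder of order $\mu_0$ that is subcritical because $\alpha > s$.

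Second, I combine the $L^\infty_t L^2_{x,v}$ and $L^2_{t,x} H^s_v$ controls by a kinetic Sobolev/velocity-averaging embedding, which yields $L^p(Q_{r_{k-1}})$ control of $f_k \eta_{k-1}$ for some $p > 2$ depending only on $s,d$. Using that $f_{k-1} \geq M 2^{-k}$ on $A_k$, this produces the standard De Giorgi recursion
\[
	a_k := |A_k| \;\leq\; C^{k} M^{-p} \, a_{k-1}^{\,1+\beta}
\]
for some $\beta > 0$, which converges to zero provided $a_0$ is small relative to a suitable power of $M$. Tuning $M$ gives a \emph{nonlinear} bound of the form
\[
	\sup_{Q_{R/8}} f \;\leq\; C\, R^{-(2d(1+s)+2s)/p}\, \Big(\int_{Q_{R/4}} f^{\,p}\dd z\Big)^{1/p}.
\]

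Third, I linearise. The previous scheme applied on arbitrary nested cylinders $Q_r \subset Q_\rho \subset Q_R$ with the same constant gives
\[
	\sup_{Q_r} f \;\leq\; C\, (\rho - r)^{-(2d(1+s)+2s)/p}\, \Big(\int_{Q_\rho} f^{\,p}\dd z\Big)^{1/p}.
\]
Interpolating $\int f^{p} \leq (\sup_{Q_\rho} f)^{p-1} \int_{Q_\rho} f$ and iterating on a geometrically shrinking sequence of radii between $R/4$ and $R/8$ absorbs the supremum on the right-hand side (Bombieri-Giusti) and delivers the linear bound
\[
	\sup_{Q_{R/8}} f \;\leq\; C R^{-(2d(1+s)+2s)} \int_{Q_{R/4}} f \dd z,
\]
which implies the statement. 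The main obstacle is the first step: unlike the divergence form setting of \cite{AL-div}, the coercive structure must be recovered from the non-divergence equation via symmetrisation of $K_f$, and keeping the error from \eqref{eq:abs-K-skew} uniformly small along the De Giorgi iteration is what makes the assumption $\alpha > s$ indispensable.
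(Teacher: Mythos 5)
Your Step 1 (energy estimate with tail bound and symmetrisation of $K_f$) matches the paper, but Steps 2--3 diverge from the route actually taken. The paper does not combine $L^\infty_t L^2_{x,v}$ and $L^2_{t,x}H^s_v$ control via a kinetic Sobolev embedding and then run an $L^2$-based De Giorgi recursion with a Bombieri-Giusti linearisation; instead it invokes the $L^1\to L^p$ gain of integrability for the fractional Kolmogorov equation (Proposition~\ref{prop:goi}, imported from \cite{AL-div}) by comparing the Boltzmann solution to a fractional Kolmogorov solution via Duhamel, and then performs a De Giorgi iteration directly in $L^1$. The payoff of the paper's $L^1$ route is that the final inequality $f(z)\leq \delta(1+R^{2s})\sup_{Q_R} f + CR^{-\frac{(s-\epsilon)p}{p-1}}\delta^{-\frac{(2-\zeta)p}{\zeta(p-1)}}\int_{Q_R} f\,\dd z$ is already linear in $\int f$; the $\delta\sup f$ is absorbed by a scalar iteration, so no separate interpolation $\int f^p\leq(\sup f)^{p-1}\int f$ is needed. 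Your route buys a more classical presentation, but the paper's route is tailored to avoid the two difficulties you gloss over.

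Those two difficulties are genuine gaps in your write-up. First, the ``kinetic Sobolev/velocity-averaging embedding'' producing $L^p$, $p>2$, from $L^\infty_t L^2_{x,v}\cap L^2_{t,x}H^s_v$ is not a standard Sobolev inequality in the hypoelliptic setting; it needs an averaging-lemma or hypoelliptic estimate in the spirit of Proposition~\ref{prop:goi}, and the paper deliberately uses the $L^1$ version because the tail term from Proposition~\ref{prop:tail-bound} and the source $\Lambda_f f$ naturally land in $L^1$, not in $L^2$. Second, the recursion $a_k\leq C^k M^{-p} a_{k-1}^{1+\beta}$ you postulate does not account for the nonlinear right-hand side of the energy inequality \eqref{eq:energy-no-tail}: the tail contributes $R^{-2s}\big(\sup(f-l)_+\big)^{2-\zeta}\int(f-l)_+^\zeta$ and the cancellation term contributes $\Lambda_0\sup f\cdot\int(f-l)_+$, so the iteration constant depends on $\sup f$ itself, not just on $M$. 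These extra $\sup f$ factors are precisely what forces the absorption step at the end; a clean $a_k\leq C^kM^{-p}a_{k-1}^{1+\beta}$ recursion as written would not close. Your plan could likely be made to work with a more careful bookkeeping of the $\sup f$ dependence (and a precise citation for the kinetic embedding), but as stated it is incomplete at exactly the points where the paper's choice of $L^1$-based De Giorgi was designed to help.
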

The proof of this proposition makes use of the gain of integrability stemming from the fractional Kolmogorov equation.
\subsection{Fractional Kolmogorov}
We consider the fractional Kolmogorov equation given by
\beq\label{eq:frac-kolm}
	\partial_t f + v \cdot \nabla_x f + (-\Delta_v)^s f =  h_1 + h_2
\eeq
for some $h_2 \in L^1\big([0, \tau]\times \R^{2d}\big)$ and some $h_1$ such that $\norm{(-\Delta_v)^{-\frac{s+\epsilon}{2}} h_1}_{L^1([0, \tau]\times \R^{2d})} < +\infty$ for $0 \leq \epsilon < s$.
\begin{proposition}\label{prop:goi}
Let $0 \leq f$ solve (or be a sub-solution of) \eqref{eq:frac-kolm} in $[0, \tau]\times \R^{2d}$, with $f(0, x, v) = f_0(x, v) \in L^1\cap L^2(\R^{2d})$, and with $h = h_1 + h_2$ where $h_1, h_2\in L^1\cap L^2([0, \tau]\times\R^{2d})$ such that 
$$\norm{(-\Delta_v)^{-\frac{s+\epsilon}{2}} h_1}_{L^1([0, \tau]\times \R^{2d})} < +\infty.$$
Then for any $0 \leq \epsilon < s$ and any $1 \leq p < 1+\frac{s-\epsilon}{s + 2d(s+1) + \epsilon}$ there holds
\beqs
	\norm{f}_{L^p([0, \tau]\times \R^{2d})} \lesssim \tau^{\frac{1}{p}-\alpha_\epsilon}\norm{(-\Delta_v)^{-\frac{s+\epsilon}{2}} h_1}_{L^1([0, \tau]\times \R^{2d})} +\tau^{\frac{1}{p}+\frac{1}{2}-\alpha_0}\norm{h_2}_{L^1([0, \tau]\times \R^{2d})} + \tau^{\frac{1}{p}+\frac{1}{2}-\alpha_0}\norm{f_0}_{L^1(\R^{2d})},
\eeqs
where $\alpha_\epsilon = d\left(1+\frac{1}{s}\right)\left(1-\frac{1}{p}\right)+\frac{s+\epsilon}{2s}$.
\end{proposition}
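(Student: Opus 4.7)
The plan is to represent $f$ via Duhamel's formula for the fractional Kolmogorov operator and to convert the $L^1$ information on the sources into $L^p$ control using the dispersive decay of its fundamental solution. Let $\Gamma$ denote the fundamental solution of $\partial_t + v\cdot\nabla_x + (-\Delta_v)^s$. Taking the Fourier transform in $(x, v)$ and integrating the transport along characteristics yields
\[
  \hat\Gamma(t, \xi, \eta) = \exp\!\left(-\int_0^t \abs{\eta + r\xi}^{2s}\,{\rm d}r\right),
\]
so $\Gamma(t, \cdot, \cdot)$ is a smooth non-negative probability density. The scaling invariance of the operator forces $\Gamma(t, x, v) = t^{-d(1+1/s)}\Phi\bigl(t^{-(1+2s)/(2s)}x, t^{-1/(2s)}v\bigr)$ for a fixed $\Phi \in L^1\cap L^\infty(\R^{2d})$, and consequently
\[
  \norm{\Gamma(t)}_{L^q} \lesssim t^{-d(1+1/s)(1-1/q)}, \qquad \norm{(-\Delta_v)^{(s+\epsilon)/2}\Gamma(t)}_{L^q} \lesssim t^{-d(1+1/s)(1-1/q) - (s+\epsilon)/(2s)},
\]
the extra factor in the second estimate coming from the $v$-scaling of the fractional Laplacian.

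The sub-solution case reduces to the equation case since $-\bigl(v\cdot\nabla_x + (-\Delta_v)^s\bigr)$ generates a positivity-preserving semigroup: if $g$ solves the equation with data $(f_0, h_1+h_2)$, then $f \leq g$ pointwise. Duhamel's formula reads
\[
  g(t) = \Gamma(t)\star f_0 + \int_0^t \Gamma(t-r)\star h_2(r)\,{\rm d}r + \int_0^t \Gamma(t-r)\star h_1(r)\,{\rm d}r,
\]
where $(\Gamma(t)\star u)(x, v) = \int \Gamma(t, x - y - tw, v - w) u(y, w)\,{\rm d}y\,{\rm d}w$ is the kinetic convolution adapted to the Galilean group. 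After the volume-preserving change of variables $y\mapsto y+tw$, this becomes a standard convolution in $(x, v)$, so Young's inequality yields $\norm{\Gamma(t)\star u}_{L^p_{x,v}} \leq \norm{\Gamma(t)}_{L^p}\norm{u}_{L^1}$. For the $h_1$ term I set $u := (-\Delta_v)^{-(s+\epsilon)/2} h_1 \in L^1$ and move the fractional Laplacian onto $\Gamma$ via Fourier duality, trading an improved source integrability for the extra decay loss $t^{-(s+\epsilon)/(2s)}$.

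Taking the $L^p$-norm in $t\in(0, \tau)$ then reduces everything to the one-dimensional estimates $\bigl(\int_0^\tau t^{-p\beta}\,{\rm d}t\bigr)^{1/p} \lesssim \tau^{1/p-\beta}$ for the $f_0$-contribution (using Minkowski) and $\norm{t^{-\beta}*g}_{L^p((0, \tau))} \lesssim \tau^{1/p-\beta}\norm{g}_{L^1((0, \tau))}$ (one-dimensional Young with exponents $(p, 1, p)$) for the $h_1$- and $h_2$-contributions, with $\beta = d(1+1/s)(1-1/p)$ in the first two cases and $\beta = \alpha_\epsilon$ in the last. The main obstacle, and the source of the restriction on $p$, is the integrability condition $p\beta < 1$ with $\beta = \alpha_\epsilon$: a short rearrangement
\[
  p\alpha_\epsilon < 1 \iff 2d(s+1)(p-1) + p(s+\epsilon) < 2s \iff p < 1 + \frac{s-\epsilon}{s + 2d(s+1) + \epsilon},
\]
which is precisely the stated range, and the $\tau$-exponents then match the claimed bound by construction.
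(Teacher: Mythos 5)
The paper does not prove this proposition here; it simply refers to \cite[Proposition~4.1]{AL-div}. Your approach—Duhamel representation, $L^p$ decay of the kinetic fundamental solution from hypocoercive scaling, Young's inequality in $(x,v)$ after the volume-preserving shear, one-dimensional Young in $t$—is the canonical route, and it is correct: the $\tau$-exponents and the admissible range of $p$ all check out (in particular $p\alpha_\epsilon<1$ is indeed the binding constraint, and your rearrangement of it is right).

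One step deserves more care. You claim to ``move the fractional Laplacian onto $\Gamma$ via Fourier duality'', i.e.\ implicitly use $\Gamma(t)\star(-\Delta_v)^{(s+\epsilon)/2}u=[(-\Delta_v)^{(s+\epsilon)/2}\Gamma(t)]\star u$. This identity is false for the Galilean convolution $\star$: on the Fourier side
\[
\widehat{\Gamma(t)\star u}(\xi,\eta)=\hat\Gamma(t,\xi,\eta)\,\hat u(\xi,\eta+t\xi),
\]
so pulling a $|\eta|^{s+\epsilon}$ off $\hat u$ at the shifted frequency produces $|\eta+t\xi|^{s+\epsilon}\hat\Gamma(t,\xi,\eta)$, i.e.\ the operator $|D_v+tD_x|^{s+\epsilon}$ acting on $\Gamma(t)$, not $(-\Delta_v)^{(s+\epsilon)/2}\Gamma(t)$. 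Nonetheless your conclusion survives: under the kinetic rescaling $\xi\mapsto t^{(1+2s)/(2s)}\xi$, $\eta\mapsto t^{1/(2s)}\eta$, the symbol $|\eta+t\xi|^{s+\epsilon}$ pulls out exactly the factor $t^{-(s+\epsilon)/(2s)}$ while the residual multiplier $|\tilde\eta+\tilde\xi|^{s+\epsilon}\hat\Phi(\tilde\xi,\tilde\eta)$ has a fixed $L^p$ inverse Fourier transform, so
\[
\norm{|D_v+tD_x|^{s+\epsilon}\Gamma(t)}_{L^p}\lesssim t^{-d(1+1/s)(1-1/p)-(s+\epsilon)/(2s)},
\]
which is the same decay you used. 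The estimate is correct; just phrase the intermediate step in terms of $|D_v+tD_x|^{s+\epsilon}\Gamma(t)$ rather than $(-\Delta_v)^{(s+\epsilon)/2}\Gamma(t)$, or first perform the shear and commute the appropriate sheared operator through the standard convolution.
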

We refer the reader to \cite[Proposition 4.1]{AL-div}.

\subsection{Proof of Proposition \ref{prop:L1-Linfty}}
We use the tail bound from Proposition \ref{prop:tail-bound} to get a local energy estimate. Then we use the gain of integrability in $L^1$ from Proposition \ref{prop:goi}, by comparing the solution of \eqref{eq:boltzmann-id} to the fractional Kolmogorov equation \eqref{eq:frac-kolm}. We bound the right hand side by the local energy estimate, and the tail term again with Proposition \ref{prop:tail-bound}. Finally, we iterate the so gained local a priori estimate on level set functions.

\textit{Step 1: Energy estimate.}
 Let $\phi \in C_c^\infty(\R^{1+2d})$ be such that $\phi(t, x, v) = 0$ for $(t, x, v)$ outside $Q_{\frac{R}{4}}$ and $\phi(t, x, v) = 1$ for $(t, x, v) \in Q_{\frac{R}{8}}$. We then test \eqref{eq:boltzmann-id} with $(f - l)_+\phi^2$, so that if we denote by $$\mathcal L f(v) := \int_{\R^d} \big(f(w) - f(v)\big) K_f(v, w) \dd w,$$ we get
\bal\label{eq:aux1-improved-DG}
	\Lambda_f&\int_{\R^{1+2d}}  f(z) \varphi^2(z) (f-l)_+(z) \dd z \\
	&\geq\frac{1}{2} \int_{\R^{1+2d}} \mathcal T (f-l)^2_+(z)  \phi^2(z)  \dd z - \int_{\R^{1+2d}}\mathcal L f(z) (f - l)_+(z)\phi^2(z) \dd z.
\eal
We observe that for any $\frac{R}{4} < r$, if we abbreviate $(f-l)_+ =: {f_l}_+$ and if we denote by $\Omega_{\rho} := [-\rho^{2s}, 0] \times B_{\rho^{1+2s}}$, then
\bal\label{eq:aux-bound-0}
	-\int_{\R^{1+2d}}&\mathcal L f(z) (f - l)_+(z)\phi^2(z) \dd z \\
	&= \int_{\R^{1+2d}}\int_{\R^d} \big[f(t, x, v) - f(t, x, w)\big]\big((f-l)_+\phi^2\big)(t, x, v)K_f(t, x, v, w) \dd w \dd z \\
	&= \frac{1}{2}\int_{\Omega_{\frac{R}{4}}} \int_{B_r} \int_{B_r} \big[f(v) - f(w)\big]\Big[\big({f_l}_+\phi^2\big)(v)- \big({f_l}_+\phi^2\big)(w)\Big]K_f(v, w) \dd w \dd z\\
	&\quad+\frac{1}{2} \int_{\Omega_{\frac{R}{4}}}\int_{B_r}\int_{B_r} \big[f(v) - f(w)\big]\Big[\big({f_l}_+\phi^2\big)(v) +\big({f_l}_+\phi^2\big)(w)\Big]K_f(v, w) \dd w \dd \dd x \dd t \\
	&\quad+\int_{\Omega_{\frac{R}{4}}}\int_{B_r}\int_{\R^d \setminus B_r} \big[f(v) - f(w)\big]\big({f_l}_+\phi^2\big)(z) K_f(v, w)\dd w \dd z.
\eal

\textit{Step 1-i.: Transport.}
We integrate by parts the transport term, and use $\mathcal T \phi \sim - R^{-2s}$.
\bal\label{eq:step1-i-a}
	-\frac{1}{2} \int_{\R^{1+2d}} \mathcal T(f-l)^2_+(z)  \phi^2(z)  \dd z
	&= \frac{1}{2} \int_{\R^{1+2d}}  (f-l)^2_+(t, x, v)  \mathcal T\phi^2(t, x, v)  \dd z \\
	&\leq C R^{-2s} \int_{Q_{\frac{R}{4}}} (f-l)^2_+(z) \dd z.
\eal	

\textit{Step 1-ii.: Tail bound.}
We use Proposition \ref{prop:tail-bound} for $f$, which we assume to be a solution, in particular a super-solution, so that for $r = \frac{R}{2}$, using \eqref{eq:upperbound-2}, 
\bal\label{eq:step1-ii}
	\int_{\Omega_{\frac{R}{4}}} &\int_{B_r}\int_{\R^d \setminus B_r} \big[f(w) - f(v)\big]\big({f_l}_+\phi^2\big)(z) K_f(v, w)\dd w \dd z\\
	&\leq \sup_{Q_{\frac{R}{4}}} (f-l)_+ \int_{Q_{\frac{R}{4}}} \int_{\R^d \setminus B_{\frac{R}{2}}} (f-l)_+(w)  K_f(v, w)\chi_{f > l}(v)\dd w \dd z\\
	&\leq C R^{-2s}\sup_{Q_{\frac{R}{4}}} (f-l)_+\sup_{Q_{\frac{R}{2}}} (f-l)_+^{1-\zeta} \int_{Q_{\frac{R}{2}}}(f-l)^{\zeta}_+(z) \dd z\\
	&\leq C R^{-2s}\left(\sup_{Q_{\frac{R}{2}}} (f-l)_+\right)^{2-\zeta} \int_{Q_{\frac{R}{2}}}(f-l)^{\zeta}_+(z) \dd z.
\eal
We used $-f(v) < -l$ in the first inequality, and Young's inequality in the last inequality.

\textit{Step 1-iii: Not-too-non-local operator.}
What remains to be estimated is the local contribution of the non-local operator, 
\bals
	\mathcal E_{loc} &:= \frac{1}{2} \int_{B_r} \int_{B_r} \big[f(v) - f(w)\big]\Big[\big({f_l}_+\phi^2\big)(v) -\big({f_l}_+\phi^2\big)(w)\Big]K_f(v, w) \dd w \dd v\\
	&+\frac{1}{2} \int_{B_r}\int_{B_r} \big[f(v) - f(w)\big]\Big[\big({f_l}_+\phi^2\big)(v) +\big({f_l}_+\phi^2\big)(w)\Big]K_f(v, w) \dd w \dd v =: \mathcal E_{loc}^{sym} + \mathcal E_{loc}^{skew},
\eals
where we recall ${f_l}_+ = (f-l)_+$.

\textit{Claim 1.} For $z = (t, x, v)$ and $r = \frac{R}{2}$
\bal\label{eq:claim1}
	\mathcal E_{loc}^{sym} &\geq \frac{1}{2}\int_{B_r}\int_{B_r} \Big[\big({f_l}_+\phi\big)(v) -\big({f_l}_+\phi\big)(w) \Big]^2K_f(v, w) \dd v- Cr^{-2s}  \int_{B_r}  {f_l}_+^2(v)\dd v\\
	&\quad +\frac{1}{2}\int_{B_r}\int_{B_r} \big[{f_l}_-(v) - {f_l}_-(w)\big]\Big[\big({f_l}_+\phi^2\big)(v) -\big({f_l}_+\phi^2\big)(w) \Big]K_f(v, w) \dd w \dd v.
\eal
See \cite[Claim (4.5)]{AL-div}.

\textit{Claim 2.}
For any $\delta_0 \in (0, 1)$, there exists a constant $C > 0$ depending on $\delta_0$ such that
\bal\label{eq:claim2}
	\mathcal E_{loc}^{skew}&\geq \frac{1}{2}\int_{B_r}\int_{B_r} \big[{f_l}_-(v) - {f_l}_-(w)\big]\Big[\big({f_l}_+\phi^2\big)(v) +\big({f_l}_+\phi^2\big)(w) \Big]K_f(v, w) \dd w \dd v  \\
	&\quad-C R^{-2s} \int_{B_{\frac{R}{4}}} {f_l}_+^2(v) \dd v -\left(\frac{1}{4} + \delta_0\right) \int_{B_r}\int_{B_r}\Big[\big({f_l}_+\phi\big)(v) - \big({f_l}_+\phi\big)(w)\Big]^2K_f(v, w) \dd w\dd v.
\eal

To prove \eqref{eq:claim2}, we note
\bals
	\big[{f_l}_+(v) - {f_l}_+(w)\big]&\Big[\big({f_l}_+\phi^2\big)(v) +\big({f_l}_+\phi^2\big)(w) \Big] \\
	&= \Big[\big({f_l}_+^2\phi^2\big)(v) -\big({f_l}_+^2\phi^2\big)(w) \Big] +  {f_l}_+(v){f_l}_+(w) \big[\phi^2(w) - \phi^2(v)\big].
\eals
Together with ${f_l}_-(v) - {f_l}_-(w) = f(v) - f(w) - \big({f_l}_+(v) - {f_l}_+(w)\big),$
this implies
\bal\label{eq:aux1-claim2}
	\int_{B_r}&\int_{B_r} \big[f(v) - f(w)\big]\Big[\big({f_l}_+\phi^2\big)(v) +\big({f_l}_+\phi^2\big)(w) \Big]K_f(v, w) \dd w \dd v\\
	&= \int_{B_r}\int_{B_r} \big[{f_l}_-(v) - {f_l}_-(w)\big]\Big[\big({g_l}_+\phi^2\big)(v) +\big({f_l}_+\phi^2\big)(w) \Big]K_f(v, w) \dd w \dd v\\
	&\quad + \int_{B_r}\int_{B_r} \Bigg\{\Big[\big({f_l}_+^2\phi^2\big)(v) -\big({f_l}_+^2\phi^2\big)(w) \Big] + {f_l}_+(v){f_l}_+(w) \big[\phi^2(w) - \phi^2(v)\big]\Bigg\}K_f(v, w) \dd w \dd v.
\eal
Then we use the cancellation \eqref{eq:1.6} 
\bal\label{eq:aux2-claim2}
	 \int_{B_r}\int_{B_r} &\Big[\big({f_l}_+\phi\big)^2(w) -\big({f_l}_+\phi\big)^2(v) \Big]K_f(v, w) \dd w \dd v  \\
	 &= \int_{B_r}\int_{B_r} \big({f_l}_+\phi\big)^2(w)\big[K_f(v, w)-K_f(w, v)\big] \dd w \dd v  \leq \Lambda_0 \int_{B_r} \big({f_l}_+^2\phi^2\big)(w)\dd w.
\eal
Moreover, we bound with Young's inequality, on the one hand, and using $-2ab = (a-b)^2 - a^2 - b^2$, on the other hand,
\bal\label{eq:aux-mixterm}
	{f_l}_+(v){f_l}_+(w) &\big[\phi^2(v) - \phi^2(w)\big] \\
	&\leq \frac{1}{2}\big[ {f_l}^2_+(v) + {f_l}^2_+(w)\big]\phi^2(v) + \frac{1}{2}\big({f_l}_+(v) - {f_l}_+(w)\big)^2 \phi^2(w) - \frac{1}{2}\big[{f_l}_+^2(v) + {f_l}_+^2(w)\big]\phi^2(w)\\
	&=\frac{1}{2}\big[ {f_l}^2_+(v) + {f_l}^2_+(w)\big]\big[\phi^2(v)-\phi^2(w)\big] + \frac{1}{2}\big({f_l}_+(v) - {f_l}_+(w)\big)^2 \phi^2(w).
\eal
Then, using the proof of \eqref{eq:3.5} and Young's inequality, we see for any $\delta_0\in (0, 1)$
\bal\label{eq:aux-mixterm-2}
	\frac{1}{2}\big({f_l}_+(v) - {f_l}_+(w)\big)^2 \phi^2(w) \leq \left(\frac{1}{2} + \delta_0\right) \Big[\big({f_l}_+\phi\big)(v) - \big({f_l}_+\phi\big)(w)\Big]^2 + C(\delta_0)\big(\phi(v) - \phi(w)\big)^2 {f_l}_+^2(v).
\eal
Thus, first using \eqref{eq:aux-mixterm}, second doing a Taylor expansion of $\phi$ and using \eqref{eq:aux-mixterm-2}, third using the symmetry of the integrand and the upper bound \eqref{eq:upperbound}, and finally using the upper bound \eqref{eq:upperbound} again for $s \in (0, 1/2)$, or the cancellation \eqref{eq:1.7} and the upper bound \eqref{eq:upperbound} for $s \in [1/2, 1)$, as well as the definition of $\eta$, we find
\bal\label{eq:aux3-claim2}
	\int_{B_r}\int_{B_r}& {f_l}_+(v){f_l}_+(w) \big[\phi^2(v) - \phi^2(w)\big]K_f(v, w) \dd w \dd v\\
	&\leq \frac{1}{2} \int_{B_r}\int_{B_r}\big[ {f_l}^2_+(v) + {f_l}^2_+(w)\big]\big[\phi^2(v)-\phi^2(w)\big] K_f(v, w)  \dd w\dd v\\
	&\quad +\frac{1}{2} \int_{B_r}\int_{B_r}\big({f_l}_+(v) - {f_l}_+(w)\big)^2 \phi^2(w) K_f(v, w)  \dd w\dd v\\
	&\leq \frac{1}{2} \int_{B_r}\int_{B_r}\big[ {f_l}^2_+(v) D_v \phi^2(v) + {f_l}^2_+(w) D_v\phi^2(w)\big](v-w)K_f(v, w)  \dd w\dd v\\
	&\quad + \norm{D_v^2 \phi^2}_{L^\infty} \int_{B_r}\int_{B_r}\big[ {f_l}^2_+(v) + {f_l}^2_+(w)\big] \abs{v-w}^2K_f(v, w)  \dd w\dd v\\
	&\quad +\left(\frac{1}{2} + \delta_0\right) \int_{B_r}\int_{B_r} \Big[\big({f_l}_+\phi\big)(v) - \big({f_l}_+\phi\big)(w)\Big]^2 K_f(v, w)  \dd w\dd v\\
	&\quad + C(\delta_0)\int_{B_r}\int_{B_r}\big(\phi(v) - \phi(w)\big)^2 {f_l}_+^2(v)K_f(v, w)  \dd w\dd v\\
	&\leq \frac{1}{2} \int_{B_r}\int_{B_r} {f_l}^2_+(v) D_v \phi^2(v) (v-w)\big[K_f(v, w)-K_f(w,v)\big]  \dd w\dd v\\
	&\quad + CR^{2-2s} \norm{D_v^2 \phi^2}_{L^\infty} \int_{B_r}{f_l}^2_+(v) \dd v\\
	&\quad +\left(\frac{1}{2} + \delta_0\right) \int_{B_r}\int_{B_r} \Big[\big({f_l}_+\phi\big)(v) - \big({f_l}_+\phi\big)(w)\Big]^2 K_f(v, w)  \dd w\dd v\\
	&\leq CR^{1-2s}  \norm{D_v\phi^2}_{L^\infty} \int_{B_r}{f_l}^2_+(v)\dd v+CR^{-2s} \int_{B_r}{f_l}^2_+(v) \dd v\\
	&\quad +\left(\frac{1}{2} + \delta_0\right) \int_{B_r}\int_{B_r} \Big[\big({f_l}_+\phi\big)(v) - \big({f_l}_+\phi\big)(w)\Big]^2 K_f(v, w)  \dd w\dd v\\
\eal
Combining \eqref{eq:aux1-claim2} with \eqref{eq:aux2-claim2} and \eqref{eq:aux3-claim2} yields \eqref{eq:claim2}.

Thus, from the claims in \eqref{eq:claim1} and \eqref{eq:claim2}, we infer for $\delta_0$ sufficiently small ($\delta_0 < \frac{1}{8}$) and for $r = \frac{R}{2}$ 
\bal\label{eq:step1-iii}
	\mathcal E_{loc}^{sym}+ \mathcal E_{loc}^{skew} &\geq \frac{1}{2}\int_{B_r}\int_{B_r} \Big[\big({f_l}_+\phi\big)(v) -\big({f_l}_+\phi\big)(w) \Big]^2K_f(v, w) \dd v- Cr^{-2s}  \int_{B_r}  {f_l}_+^2(v)\dd v\\
	&\quad +\frac{1}{2}\int_{B_r}\int_{B_r} \big[{f_l}_-(v) - {f_l}_-(w)\big]\Big[\big({f_l}_+\phi^2\big)(v) -\big({f_l}_+\phi^2\big)(w) \Big]K_f(v, w) \dd w \dd v\\
	&\quad +\frac{1}{2}\int_{B_r}\int_{B_r} \big[{f_l}_-(v) - {f_l}_-(w)\big]\Big[\big({f_l}_+\phi^2\big)(v) +\big({f_l}_+\phi^2\big)(w) \Big]K_f(v, w) \dd w \dd v  \\
	&\quad-C R^{-2s} \int_{B_{\frac{R}{4}}} {f_l}_+^2(v) \dd v \\
	&\quad-\left(\frac{1}{4} + \delta_0\right) \int_{B_r}\int_{B_r}\Big[\big({f_l}_+\phi\big)(v) - \big({f_l}_+\phi\big)(w)\Big]^2K_f(v, w) \dd w\dd v\\
	&\geq \frac{1}{8}\int_{B_r}\int_{B_r} \Big[\big({f_l}_+\phi\big)(v) -\big({f_l}_+\phi\big)(w) \Big]^2K_f(v, w) \dd v- Cr^{-2s}  \int_{B_r}  {f_l}_+^2(v)\dd v.
\eal

\textit{Step 1-iv.: Conclusion.}
Combining \eqref{eq:aux1-improved-DG} with \eqref{eq:step1-i-a}, \eqref{eq:step1-ii}, and \eqref{eq:step1-iii}, we conclude
\bal\label{eq:energy-no-tail}
	\frac{1}{8}&\int_{ \Omega_{\frac{R}{4}}}\iint_{B_{\frac{R}{2}} \times B_{\frac{R}{2}}} \Big[\big((f-l)_+\phi\big)(v) -\big((f-l)_+\phi\big)(w) \Big]^2K_f(v, w) \dd w \dd v \dd x \dd t\\
	&\leq CR^{-2s}\int_{Q_{\frac{R}{4}}} (f-l)_+^2(z)\dd z	 +CR^{-2s} \big(\sup_{Q_{\frac{R}{2}}} f\big)^{2-\zeta} \int_{Q_{\frac{R}{2}}}(f-l)^{\zeta}_+(z) \dd z+ \mu_0\sup_{Q_{\frac{R}{4}}} f \int_{Q_{\frac{R}{4}}} (f-l)_+ \dd z.
\eal
Note that we used \eqref{eq:cancellation} to bound $\Lambda_f$.

\textit{Step 2 \& 3: Gain of Integrability \& De Giorgi iteration.} 
These steps carry over verbatim from \cite[Proof of Prop. 3.1]{AL-div}, if we work with the Boltzmann equation in the formulation \eqref{eq:boltzmann-id}.
Then we have the following gain of integrability 
\bal\label{eq:goi-bound}
	\norm{(f-l)_+  \phi}_{L^p(Q_{\frac{R}{8}})}  &\leq CR^{s+\epsilon} \norm{(f-l)_+(t_0)}_{L^1(Q^{t_0}_{\frac{R}{4}})}+  CR^{-s+\epsilon} \norm{(f-l)_+}_{L^1(Q_{\frac{R}{2}})} \\
	&\quad+ C R^{-s+\epsilon}\abs{\{f > l\} \cap Q_{\frac{R}{2}}}^{\frac{1}{2}} \norm{(f-l)_+}_{L^2(Q_{\frac{R}{2}})} \\	
	&\quad +C R^{-s+\epsilon} \left(\sup_{Q_{R} }(f-l)_+\right)^{\frac{2-\zeta}{2}} \left(\int_{Q_{R}}(f-l)^{\zeta}_+(z) \dd z\right)^{\frac{1}{2}} \abs{\{f > l\} \cap Q_{R}}^{\frac{1}{2}}\\
	&\quad +CR^{-s+\epsilon} \sup_{Q_{R}} (f-l)_+^{1-\zeta} \int_{Q_R} (f-l)_+^{\zeta} \dd z\\
	&\quad +C R^{s+\epsilon} \sup_{Q_{R}} f \abs{\{f > l\}\cap Q_R} +C R^{\epsilon} (\sup_{Q_{R}} f)^{\frac{1}{2}} \abs{\{f > l\}\cap Q_R}^{\frac{1}{2}} \norm{(f-l)_+}_{L^1}^{\frac{1}{2}},
\eal
with which we conclude for almost every $(t, x, v) \in Q_{\frac{R}{8}}$
\beqs
	 f(z) \leq L = \delta(1+R^{2s}) \sup_{Q_R}f +  R^{-\frac{(s-\epsilon)p}{p-1}}2^{\frac{4p^2}{(p-1)^2}}\delta^{-\frac{2-\zeta}{\zeta}\frac{p}{p-1}}\int_{Q_R} f(z) \dd z.
\eeqs
For $R \leq 1$, we absorb the first term on the right hand side with a standard iteration argument, concluding the proof of Proposition \ref{prop:L1-Linfty}.

\subsection{Strong Harnack for Boltzmann}
\begin{proof}[Proof of Theorem \ref{thm:sh-be}]
Let $f$ solve \eqref{eq:boltzmann} in $(0, T) \times \R^d \times \R^d$, and assume \eqref{eq:hydro} holds. Then, in particular, $K_f$ and $\Lambda_f$ given in \eqref{eq:boltzmann_kernel} and \eqref{eq:Lambda-f} satisfy \eqref{eq:cancellation}-\eqref{eq:1.7}. Moreover, due to our notion of solutions in Definition \ref{def:sol}, also \eqref{eq:abs-K-skew} is satisfied. 

Let $\mathcal I^- := (\tau_0, \tau_1)$ and $\mathcal I^+ := (\tau_2, \tau_3)$ be two disjoint compactly contained subsets of $(0, T)$ such that $\tau_2 - \tau_1 \geq r_0^{2s}$ for sufficiently small $r_0 < \frac{1}{6}$. 
As a consequence of Proposition \ref{prop:L1-Linfty}, Young's inequality and the Weak Harnack inequality \cite[Theorem 1.6]{IS}, we obtain for any $\delta \in (0, 1)$ and for $\zeta \in (0, 1)$ from the Weak Harnack inequality,
\bals
	\sup_{\mathcal I^- \times Q_{\frac{r_0}{4}}^t} f &\leq C r_0^{-(2d(1+s) + 2s)} \norm{f}_{L^1(\mathcal I^- \times Q_{2r_0}^t)} \leq \delta \sup_{\mathcal I^- \times Q_{2r_0}^t} f + C(\delta) r_0^{-\frac{(2d(1+s) + 2s)}{\zeta}}\norm{f}_{L^\zeta(\mathcal I^- \times Q_{2r_0}^t)}\\
	&\leq \delta \sup_{\mathcal I^- \times Q_{2r_0}^t} f + C(\delta) r_0^{-\frac{(2d(1+s) + 2s)}{\zeta}}\inf_{\mathcal I^+ \times Q_{\frac{r_0}{4}}^t} f.
\eals
We recall that there is no source term appearing after applying the Weak Harnack inequality, due to the positivity of $Q_2$ \eqref{eq:Q2}, which implies that $f$ is a non-negative super-solution to \eqref{eq:boltzmann-id} with zero source term.
Absorbing the first term on the left hand side with a standard iteration argument concludes the proof of Theorem \ref{thm:sh-be}.
\end{proof}

\section{Brief note on bounds of the fundamental solution}\label{sec:bounds-fundsol}
We end this article with a short remark on how to adapt Aronson's method for non-local hypoelliptic equations in divergence form to the case of more general kernels that satisfy the ellipticity assumptions \eqref{eq:coercivity}-\eqref{eq:abs-K-skew} inspired from the Boltzmann collision kernel. 

\subsection{Results}
As a consequence of Theorem \ref{thm:sh-be}, we derive polynomial upper and exponential lower bounds on the fundamental solution of \eqref{eq:boltzmann-id} with coefficients $K_h$ and $\Lambda_h$ given by \eqref{eq:boltzmann_kernel} and \eqref{eq:Lambda-f}, respectively, for a fixed function $h \geq 0$ that satisfies \eqref{eq:hydro}.
To give sense to the next three theorems, we assume existence of a non-negative measurable function $J$, which is the fundamental solution of \eqref{eq:boltzmann-id} linearised around a fixed function $g$, connecting a given point $(t, x, v) \in \R^{1+2d}$ with $(\tau, y, w) \in \R^{1+2d}$, in the sequel denoted by
\beqs
	J(t, x, v; \tau, y, w) = J(t- \tau, x - y - (t-\tau)w, v-w) =: J\big((\tau, y, w)^{-1} \circ (t, x, v)\big),
\eeqs
where $\circ$ denotes the Galilean translation, that is $(t_0, x_0, v_0) \circ (t, x, v) = (t_0 + t, x_0+x + tv_0, v_0 + v)$, which respects the translation invariance of \eqref{eq:boltzmann}.
Moreover, we assume that the fundamental solution $J$ has the following properties:
\begin{enumerate}[i.]

\item For every $t \in \R_+$ there holds the normalisation
\beq\label{eq:normalisation-J}
	\int_{\R^{2d}} J(t, x, v)  \dd x \dd v = 1.
\eeq

\item There holds $J \geq 0$, and for all $(t, x, v), (\tau, y, w) \in \R_+ \times \R^{2d}$ a form of symmetry
\beq\label{eq:symmetry-J}
J\left((\tau, y, w)^{-1} \circ (t, x, v)\right) = J\left((\tau, x, v)^{-1} \circ (t, y, w)\right).
\eeq

\item For any $0 \leq \tau < \sigma < t < T$ and any $(x, v), (y, w) \in \R^{2d}$ the Chapman-Kolmogorov identity holds
\bal\label{eq:representation-J}
	J(t, x, v; \tau, y, w) = \int_{\R^{2d}} J(t, x, v; \sigma, \varphi, \xi)J(\sigma, \varphi, \xi; \tau, y, w) \dd \varphi \dd \xi.
\eal
\end{enumerate}
Then, we deduce, on the one hand, polynomial upper bounds.
\begin{theorem}[Polynomial upper bounds on the fundamental solution]\label{thm:upperbounds}
Let $x, v, y_0, w_0 \in \R^d$, and $0 \leq \tau_0 < \sigma < T$. Let $J$ be the fundamental solution of \eqref{eq:boltzmann-id} in $[0, T] \times \R^{2d}$ with coefficients $K_h$ and $\Lambda_h$ given by \eqref{eq:boltzmann_kernel} and \eqref{eq:Lambda-f}, respectively, for a fixed function $h \geq 0$ that satisfies \eqref{eq:hydro}. Assume $J$ satisfies \eqref{eq:normalisation-J}, \eqref{eq:symmetry-J} and \eqref{eq:representation-J}. Then there exists $C > 0$ depending on $s, \gamma, d, m_0, M_0, E_0, H_0$ (note that these constants refer to the mass, energy and entropy of $h$), such that
\bal\label{eq:J-up}
	J&(\sigma,x, v; \tau_0, y_0, w_0) \\
	&\leq C (\sigma - \tau_0)^{-\frac{2d(1+s)}{2s}}\left[ 1 +\frac{\max\left\{\abs{v- w_0}^{2s},  \abs{x - y_0 - (\sigma - \tau_0) (v-w_0)}^{\frac{2s}{1+2s}}\right\}}{\sigma - \tau_0}\right]^{-\frac{s}{4s}}.
\eal
\end{theorem}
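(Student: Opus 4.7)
The proof follows Aronson's strategy adapted to the kinetic non-local setting: combine Proposition \ref{prop:L1-Linfty}, Theorem \ref{thm:sh-be}, Galilean invariance and the Chapman--Kolmogorov identity \eqref{eq:representation-J}.

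\textbf{Step 1: Galilean reduction.} By the translation structure built into $J$, one has $J(\sigma, x, v; \tau_0, y_0, w_0) = J(t, X, V; 0, 0, 0)$ with $t := \sigma - \tau_0$, $V := v - w_0$ and $X := x - y_0 - t w_0$, and the displacement in the statement reduces (up to this reparametrisation) to the backward free-stream quantity $|X - tV|$. So we may assume $(\tau_0, y_0, w_0) = (0, 0, 0)$ throughout.

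\textbf{Step 2: On-diagonal upper bound.} The coefficients $K_h, \Lambda_h$ built from a fixed $h$ satisfying \eqref{eq:hydro} lie in the ellipticity class \eqref{eq:cancellation}--\eqref{eq:abs-K-skew}, so Proposition \ref{prop:L1-Linfty} applies to $J(\cdot\,;0, 0, 0)$ on the kinetic cylinder $Q_r(t, X, V)$. Taking $r := (t/2)^{1/(2s)}$ and using \eqref{eq:normalisation-J},
\[
    \int_{Q_r(t, X, V)} J(s, y, u; 0, 0, 0)\dd s \dd y \dd u \leq \int_{t - r^{2s}}^{t} \int_{\R^{2d}} J(s, y, u; 0, 0, 0)\dd y \dd u \dd s = r^{2s},
\]
which yields the on-diagonal bound
\[
    J(t, X, V; 0, 0, 0) \leq C\, r^{-(2d(1+s) + 2s)} \cdot r^{2s} = C\, t^{-\frac{d(1+s)}{s}}.
\]
For large $t$, one first rescales $J$ via the scaling invariance of \eqref{eq:boltzmann} to place oneself in the small-scale regime where Proposition \ref{prop:L1-Linfty} is stated.

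\textbf{Step 3: Off-diagonal decay via a Strong Harnack chain.} To install the polynomial factor in $\rho := \max\{|V|^{2s}, |X - tV|^{2s/(1+2s)}\}$, I would set $M := J(t, X, V; 0, 0, 0)$ and apply Theorem \ref{thm:sh-be} iteratively along a chain of $N$ Galilean-shifted kinetic cylinders of scale $r_0 \sim t^{1/(2s)}$. Each link propagates the lower bound by a factor $C_H^{-1}$, so $J \geq M \cdot C_H^{-N}$ on a cylinder of controlled kinetic measure $\sim r_0^{2d(1+s) + 2s}$. The mass constraint \eqref{eq:normalisation-J}, integrated over this cylinder, bounds the total mass by the time length $r_0^{2s}$, forcing $M \leq C_H^{N} r_0^{-2d(1+s)}$. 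Balancing the number of steps $N$ against the kinetic distance $\rho/t^{1/(2s)}$ converts the exponential loss $C_H^{N}$ into the polynomial decay claimed in \eqref{eq:J-up}.

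\textbf{Main obstacle.} The central technical difficulty is constructing the Harnack chain with the correct anisotropic kinetic scaling (time $\sim r^{2s}$, space $\sim r^{1+2s}$, velocity $\sim r$) so that the exponential loss $C_H^{N}$ across $N$ steps sharpens into the polynomial decay in $\rho/t$ of \eqref{eq:J-up}. The time-ordering constraints $\tau_0 < \tau_1 < \tau_2 < \tau_3$ with $\tau_2 - \tau_1 \geq r_0^{2s}$ in Theorem \ref{thm:sh-be} fix the geometry of each link, and the ellipticity class \eqref{eq:cancellation}--\eqref{eq:abs-K-skew} has to be preserved under each rescaling; this follows from the scaling invariance of \eqref{eq:boltzmann} but must be tracked explicitly. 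A useful alternative, which may simplify the final balancing, is to split via Chapman--Kolmogorov \eqref{eq:representation-J} at intermediate times $\sigma = \tau_0 + t/2$ and re-use the on-diagonal bound and symmetry \eqref{eq:symmetry-J} on each factor.
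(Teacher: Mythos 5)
Your Step 2 (on-diagonal bound) matches the paper: Proposition \ref{prop:L1-Linfty} applied on a cylinder $Q_r(t,X,V)$ with $r = (t/2)^{1/(2s)}$, combined with the normalisation \eqref{eq:normalisation-J}, yields exactly the prefactor $t^{-d(1+s)/s}$. Step 1 is fine. Step 3, however, contains a genuine gap that a Harnack chain cannot repair.

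The issue is the direction of the loss. A Harnack chain of length $N$ gives you $J \geq M\,C_H^{-N}$ along the chain, hence by the mass-conservation argument
\[
	M \;\leq\; C_H^{N}\, r_0^{-2d(1+s)}, \qquad N \sim \left(\frac{\rho}{t}\right)^{\frac{1}{2s}},
\]
which is an \emph{exponentially growing} upper bound in the rescaled kinetic distance $\rho/t$. This is strictly weaker than the on-diagonal bound $t^{-d(1+s)/s}$ alone (which holds uniformly in position), and it cannot be ``converted'' into the polynomial \emph{decay} $(1+\rho/t)^{-1/4}$ of \eqref{eq:J-up} by any balancing of $N$; the exponential degradation is intrinsic to Harnack chaining and is precisely why such chains are used for \emph{lower} bounds (Theorem \ref{thm:lowerbounds}), not for off-diagonal upper bounds. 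What actually produces decay is the observation that the fundamental solution has unit total mass and that this mass is initially concentrated at $(y_0,w_0)$; the decay must be extracted by a weighted energy estimate that exploits where the mass is, not merely by comparing values.

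The paper's mechanism for Step 3 is Aronson's method: one tests \eqref{eq:boltzmann-div} against $f H \varphi_R^2$ where $H$ is the decay function of Lemma \ref{lem:existence-H}, which is a super-solution of the truncated adjoint in the sense of condition \eqref{eq:aronson-condition}. Proposition \ref{prop:aronson} then gives a weighted $L^2$ estimate
\[
	\sup_{t \in (\tau_0, \sigma)} \int_{\R^{2d}} f^2 H \dd x \dd v \leq \int_{\R^{2d}} f^2(\tau_0) H(\tau_0) \dd x \dd v + C\rho^{-2s}\norm{H}_{L^\infty}\norm{f}_{L^2}^2,
\]
which, applied to $J$ and combined with the on-diagonal bound and the Chapman--Kolmogorov identity \eqref{eq:representation-J}, yields the polynomial off-diagonal decay as in \cite[Theorem 6.2]{AL-div}. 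The content that is genuinely new relative to \cite{AL-div} is that this weighted estimate and the existence of a suitable $H$ must be established without the pointwise divergence-form symmetry $K_h(v,w) = K_h(w,v)$; the paper handles this via the weak divergence-form structure \eqref{eq:boltzmann-div} and the cancellation conditions \eqref{eq:1.6}--\eqref{eq:1.7}. Your final remark about splitting via Chapman--Kolmogorov and using symmetry \eqref{eq:symmetry-J} is indeed part of the real argument, but it appears as an auxiliary device, and it cannot substitute for the weighted energy estimate; without $H$ you have no way to encode the off-diagonal smallness.
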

On the other hand, we derive an exponential lower bound.
\begin{theorem}[Exponential lower bounds on the fundamental solution]\label{thm:lowerbounds}
Let $x, v, y_0, w_0 \in \R^d$, and $0 \leq \tau_0 < \sigma < T$. Let $J$ be the fundamental solution of \eqref{eq:boltzmann-id} in $[0, T] \times \R^{2d}$ with coefficients $K_h$ and $\Lambda_h$ given by \eqref{eq:boltzmann_kernel} and \eqref{eq:Lambda-f}, respectively, for a fixed function $h \geq 0$ that satisfies \eqref{eq:hydro}. Assume $J$ satisfies \eqref{eq:normalisation-J}, \eqref{eq:symmetry-J} and \eqref{eq:representation-J}. Then there exists $C > 0$ depending on $s, \gamma, d, m_0, M_0, E_0, H_0$ (note that these constants refer to the mass, energy and entropy of $h$), such that
\beqs
	J(\sigma, x, v; \tau_0, y_0, w_0) \geq C(\sigma-\tau_0)^{-\frac{2d(1+s)}{2s}}\exp\left\{ - C \left( \frac{\abs{x - y_0 - (\sigma-\tau_0)w_0}^{2s}}{(\sigma-\tau_0)^{1+2s}} + \frac{\abs{v-w}^{2s}}{\sigma-\tau_0} \right)\right\}.
\eeqs
\end{theorem}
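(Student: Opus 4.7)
The plan is a kinetic adaptation of Aronson's chain argument, combining the pointwise Strong Harnack inequality of Theorem \ref{thm:sh-be} with the Chapman-Kolmogorov identity \eqref{eq:representation-J}. Write $T := \sigma - \tau_0$, and observe that $J(\cdot,\cdot,\cdot;\tau_0,y_0,w_0)$ is a non-negative solution of \eqref{eq:boltzmann-id} on $(\tau_0,\sigma)\times\R^d\times\R^d$ with coefficients $K_h, \Lambda_h$ lying in the uniform ellipticity class \eqref{eq:cancellation}--\eqref{eq:abs-K-skew}, so Theorem \ref{thm:sh-be} applies. By Galilean invariance of \eqref{eq:boltzmann-id} I may work along the natural transport trajectory emanating from $(\tau_0,y_0,w_0)$.

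\textbf{Step 1 (near-diagonal lower bound).} I would first establish that, on a kinetic cylinder of radius $\rho \simeq T^{1/(2s)}$ centred on the free-transport trajectory $(t, y_0 + (t-\tau_0)w_0, w_0)$, one has
\beqs
J(t,x,v;\tau_0,y_0,w_0) \geq c_0 \, T^{-\frac{2d(1+s)}{2s}}.
\eeqs
This combines: (i) the mass conservation \eqref{eq:normalisation-J}; (ii) the polynomial upper bound of Theorem \ref{thm:upperbounds}, which forces $J(t,\cdot,\cdot)$ to concentrate at most on a kinetic $(x,v)$-set of volume $\simeq T^{d(1+s)/s}$, hence to be $\simeq T^{-d(1+s)/s}$ on a positive-measure subset; (iii) the Strong Harnack inequality of Theorem \ref{thm:sh-be}, which upgrades this measurable lower bound to a pointwise one across the cylinder.

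\textbf{Step 2 (chain and Chapman-Kolmogorov).} For a general target $(\sigma,x,v)$ set
\beqs
D := 1 + \frac{|v-w_0|^{2s}}{T} + \frac{|x - y_0 - Tw_0|^{2s}}{T^{1+2s}},
\eeqs
so that a single kinetic hop at scale $\rho_N := (T/N)^{1/(2s)}$ affords a velocity jump $\lesssim \rho_N$ and a spatial slack $\lesssim \rho_N^{1+2s}$. Pick $N \asymp D \in \N$, and construct intermediate times $\sigma_i := \tau_0 + iT/N$, velocities $w_i$ interpolating $w_0 \to v$ linearly, and positions $y_i$ interpolating the free-transport trajectory with a corrector $\simeq \frac{i^2}{N^2}(x - y_0 - Tw_0)$, designed so that each $(\sigma_i,y_i,w_i) \to (\sigma_{i+1},y_{i+1},w_{i+1})$ sits in the near-diagonal regime of Step 1. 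Iterating \eqref{eq:representation-J} $N$ times with intermediate integrations restricted to the corresponding small cylinders, and inserting the Step 1 bound at each hop, yields
\bals
	J(\sigma,x,v;\tau_0,y_0,w_0) &\geq c_0^N \Big(\tfrac{T}{N}\Big)^{-\frac{2d(1+s)}{2s}} \prod_{i=1}^{N-1} \big|\mathrm{cyl}_i\big| \gtrsim T^{-\frac{2d(1+s)}{2s}} \, N^{\frac{2d(1+s)}{2s}} \, c_0^N,
\eals
since every intermediate cylinder has $(x,v)$-volume $\simeq (T/N)^{d(1+s)/s}$. Absorbing the polynomial $N^{d(1+s)/s}$ into the exponential and using $N \asymp D$ produces exactly $T^{-d(1+s)/s} \exp(-CD)$.

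\textbf{Main obstacle.} The delicate point is the geometric construction of the chain: each hop must genuinely land in the near-diagonal cylinder of Step 1, while the cumulative velocity change $|v-w_0|$ and the spatial mismatch $|x-y_0-Tw_0|$ must be exhausted simultaneously. The per-step budget $\rho_N$ in velocity and $\rho_N^{1+2s}$ in position forces the relation $N\asymp D$, explaining why the kinetic distance, rather than the Euclidean one, appears in the exponent. A secondary check is that the Strong Harnack constant of Theorem \ref{thm:sh-be} remains uniform along the chain; this is automatic since $K_h, \Lambda_h$ depend only on $h$, the estimate is scale-invariant under the kinetic rescaling up to the threshold $\rho < 1/6$, and Galilean invariance lets one translate the estimate from cylinder to cylinder without loss.
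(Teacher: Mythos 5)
Your Step~1(ii) does not go through with the upper bound actually available. Theorem~\ref{thm:upperbounds} gives
$J \leq C(\sigma-\tau_0)^{-\frac{2d(1+s)}{2s}}\bigl[1 + D\bigr]^{-\frac{1}{4}}$, where $D$ is the rescaled kinetic distance. The exponent $-\tfrac{s}{4s}=-\tfrac14$ is far too weak to be integrable on $\R^{2d}$ in the kinetic variables (one would need the exponent to exceed $\tfrac{d(1+s)}{s}$), so mass conservation \eqref{eq:normalisation-J} together with the pointwise upper bound does \emph{not} force $J(t,\cdot,\cdot)$ to concentrate on a cylinder of volume $\simeq T^{d(1+s)/s}$ around the free-transport image: it only bounds $J$ from above, and the tail of $\int J$ outside any fixed cylinder cannot be controlled. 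Without a set of definite measure on which $J$ is provably $\gtrsim T^{-d(1+s)/s}$, the Harnack upgrade in (iii) has nothing to act on, so the near-diagonal pointwise lower bound that your chain argument feeds on is not established.

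The paper circumvents exactly this difficulty by working with an \emph{integrated} quantity rather than a pointwise one. It introduces $f(\tau_1,y_1,w_1)=M\int_{\{D<\alpha(\sigma-\tau_0)\}}J(\sigma,x,v;\tau_1,y_1,w_1)\dd x\dd v$, which is a function of the backward variables and therefore solves the adjoint of \eqref{eq:boltzmann-div}; after multiplying by $e^{-c_0(\tau_1-\sigma)}$ and extending past $t=\sigma$ by the constant $Me^{-c_0(\tau_1-\sigma)}$, it becomes a non-negative super-solution globally (using \eqref{eq:1.6} to absorb the skew-symmetric part). The \emph{Weak} Harnack inequality from \cite[Theorem 1.6]{IS} then gives $f(\tau_0,y_0,w_0)\geq c$ directly, with no need for tail integrability of the upper bound, and the remainder of the argument (chaining via \eqref{eq:representation-J}, as you sketch in Step~2) is imported from \cite[Theorem 7.2]{AL-div}. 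Your chaining step is morally the same as theirs and the exponent bookkeeping is correct, but the near-diagonal input must be obtained this way, not by concentration. A secondary caution: Theorem~\ref{thm:sh-be} as stated is for non-negative solutions $f$ of the nonlinear equation satisfying \eqref{eq:hydro}; to apply a Harnack-type inequality to $J$ (which solves the linearised equation around $h$ and a priori does not satisfy \eqref{eq:hydro}), one must invoke the underlying linear estimate (Proposition~\ref{prop:L1-Linfty} together with the Weak Harnack inequality for kernels in the ellipticity class \eqref{eq:cancellation}--\eqref{eq:abs-K-skew}), rather than Theorem~\ref{thm:sh-be} verbatim.
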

\begin{remark}
\begin{enumerate}
\item In \cite{auscher-imbert-niebel-1, auscher-imbert-niebel-2}, they construct a fundamental solution operator for hypoelliptic non-local equations with kernels that are general enough to cover the Boltzmann collision operator.
\item We can also draw a connection to the Gaussian lower bounds for solutions to the Boltzmann equation by Imbert-Mouhot-Silvestre in \cite{IMS}. The authors show that for any $t \in [0, T]$ there exists $a(t), b(t) > 0$ such that any non-negative solution of the Boltzmann equation satisfies
\beqs
	f(t, x, v) \geq a(t) e^{-b(t) \abs{v}^2}.
\eeqs
The same authors establish decay estimates for solutions to \eqref{eq:boltzmann} in \cite{IMSfrench}.
\end{enumerate}
\end{remark}

\subsection{On the polynomial upper bounds}
The proof method remains the same as \cite[Section 6]{AL-div}. The on-diagonal bound \cite[Theorem 6.1]{AL-div} follows by Proposition \ref{prop:L1-Linfty}. The off-diagonal bound \cite[Theorem 6.2]{AL-div} follows if we are able to derive Aronson's bound in \cite[Proposition 6.4]{AL-div} without using the divergence form symmetry of the kernel of the non-local operator. This is contained in Proposition \ref{prop:aronson} below, which is the weak divergence form analogue of \cite[Proposition 6.4]{AL-div}. Once we have established Proposition \ref{prop:aronson} and constructed a decay function $H$ satisfying Aronson's condition \eqref{eq:aronson-condition}, the proof of Theorem \ref{thm:upperbounds} carries over almost verbatim from the proof of \cite[Theorem 1.2]{AL-div}.

\subsubsection{Decay relation}

We aim to define a function that satisfies for some $\rho > 0$
\beq\label{eq:aronson-condition}
	\mathcal T H(v)+ \frac{1}{2} \int_{B_\rho(v)} \big[H(w) - H(v)\big] K_h(w, v) \dd w+ \frac{H(v)}{2} \int_{\R^d}  \left[K_h(v, w)-K_h(w, v)\right] \dd w \leq 0.
\eeq
Then we can derive the following statement. 
\begin{proposition}[Aronson's auxiliary proposition]\label{prop:aronson}
Let $0 < \tau_0 <\sigma < T$ and $0 < \rho$.
Let $f \in L^\infty\big((\tau_0, T) \times \R^{2d}\big)$ solve \eqref{eq:boltzmann-div} in $(\tau_0, T) \times \R^{2d}$ with a non-local operator whose kernel is non-negative and satisfies \eqref{eq:upperbound}, \eqref{eq:1.6}, \eqref{eq:1.7}. Then for every bounded function $H : [\tau_0, \sigma] \times \R^{2d} \to [0, \infty)$ such that $H^{\frac{1}{2}} \in L^2\big( (\tau_0, \sigma) \times \R^{d}; H^s_v(\R^d)\big)$  and $D_v H, D_v^2 H \in L^2((\tau_0, \sigma) \times \R^{d}; L^\infty(\R^d))$,
and, moreover, satisfying \eqref{eq:aronson-condition} in $(\tau_0, \sigma) \times \R^{2d}$,
there exists a constant $C = C(\lambda, \Lambda, s, d)$ such that
\bal\label{eq:step1}
	\sup_{t \in (\tau_0, \sigma)} \int_{\R^{2d}} f^2(t, x, v) H(t, x, v)  \dd x \dd v\leq &\int_{\R^{2d}} f^2(\tau_0, x, v) H(\tau_0, x, v) \dd x \dd v \\
	&+C \rho^{-2s}\norm{H}_{L^\infty([\tau_0, \sigma]\times \R^{2d})} \norm{f}_{L^2([\tau_0, \sigma]\times \R^{2d})}^2.
\eal
\end{proposition}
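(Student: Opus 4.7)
The plan is to test equation \eqref{eq:boltzmann-div} with the multiplier $fH$ and perform an Aronson-type energy estimate on $(\tau_0, t) \times \R^{2d}$ for arbitrary $t \in (\tau_0, \sigma)$, exploiting the weak divergence-form symmetrization. For the transport part, integration by parts in $t$ and $x$ yields
\begin{equation*}
\int_{\tau_0}^t\!\!\int \mathcal T f \cdot fH \dd x \dd v \dd s = \frac{1}{2}\int [f^2 H]^{t}_{\tau_0} \dd x \dd v - \frac{1}{2}\int_{\tau_0}^t\!\!\int f^2 \mathcal T H \dd x \dd v \dd s.
\end{equation*}
For the non-local part, I would exploit the equivalent rewriting $\mathcal T f = \int f(w) K_f(v,w) \dd w - f(v)\int K_f(w,v) \dd w$, so that testing with $fH$ produces $\iint f(v) f(w) H(v) K_f(v,w) \dd w \dd v - \int f^2(v) H(v)\int K_f(w,v) \dd w \dd v$.

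Next, I would symmetrize the first double integral by swapping $v \leftrightarrow w$ (which interchanges $K_f(v,w)$ with $K_f(w,v)$) and then insert the identity $2f(v)f(w) = f^2(v) + f^2(w) - (f(v)-f(w))^2$. After averaging and relabelling, this decomposes the non-local contribution into a coercive Dirichlet-type form
\begin{equation*}
-\frac{1}{4}\iint (f(w)-f(v))^2\bigl[H(v) K_f(v,w) + H(w) K_f(w,v)\bigr] \dd w \dd v,
\end{equation*}
which is non-positive and may be discarded, plus a linear-in-$f^2$ term with multiplier proportional to $H(v)\int(K_f(v,w) - K_f(w,v)) \dd w + \int(H(w)-H(v))K_f(w,v) \dd w$. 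Combined with the transport term $-\frac{1}{2}\int f^2 \mathcal T H \dd z$, the total $f^2$-coefficient matches (up to a fixed factor) the left-hand side of Aronson's decay relation \eqref{eq:aronson-condition} once we split $\int(H(w)-H(v))K_f(w,v) \dd w$ at the radius $\rho$.

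The near-field piece over $B_\rho(v)$ is thereby absorbed by the hypothesis \eqref{eq:aronson-condition} and yields a non-positive contribution. For the tail piece over $B_\rho^c(v)$, I would use $|H(w)-H(v)| \leq 2\|H\|_{L^\infty}$ together with $1 \leq |v-w|^2/\rho^2$ to invoke the upper bound \eqref{eq:upperbound-2}; a dyadic decomposition produces an error of order $C\rho^{-2s}\|H\|_{L^\infty}\int f^2 \dd z$. Taking the supremum over $t \in (\tau_0, \sigma)$ on the left-hand side then yields \eqref{eq:step1}.

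The main obstacle is the careful bookkeeping needed to recover exactly the structure of \eqref{eq:aronson-condition} from the weak-divergence-form symmetrization --- both the antisymmetric correction $H(v)\int(K_f(v,w)-K_f(w,v)) \dd w$ and the ``adjoint-like'' operator $\int_{B_\rho}(H(w)-H(v))K_f(w,v) \dd w$ (built from $K_f(w,v)$ rather than $K_f(v,w)$) must emerge naturally from the algebra, without assuming the pointwise symmetry $K_f(v,w) = K_f(w,v)$. A secondary difficulty is controlling $\int_{B_\rho^c(v)} K_f(w,v) \dd w$, since \eqref{eq:upperbound} is phrased with respect to the ``forward'' variable $w$; this can be addressed either by invoking \eqref{eq:1.6}--\eqref{eq:1.7} to transfer between $K_f(v,w)$ and $K_f(w,v)$ modulo bounded corrections, or by appealing to the explicit Boltzmann representation \eqref{eq:boltzmann_kernel} to derive a direct upper bound for $K_f(w,v)$.
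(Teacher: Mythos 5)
Your proposal follows essentially the same strategy as the paper and the algebra works out: testing with $fH$, rewriting the weak--divergence form of the equation as $\mathcal{T}f = \int f(w)K_f(v,w)\dd w - f(v)\int K_f(w,v)\dd w$, symmetrizing via $v \leftrightarrow w$, and applying $2f(v)f(w) = f^2(v)+f^2(w)-(f(v)-f(w))^2$ produces
\begin{equation*}
\int (\mathcal{L}f)fH \dd v \leq \tfrac{1}{2}\int f^2(v)H(v)\!\int[K_f(v,w)-K_f(w,v)]\dd w\dd v + \tfrac{1}{2}\int f^2(v)\!\int[H(w)-H(v)]K_f(w,v)\dd w\dd v,
\end{equation*}
which after splitting at radius $\rho$ matches the structure of \eqref{eq:aronson-condition}. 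Your identity-based symmetrization is in fact cleaner than what the paper does: the paper first splits the non-local term using $g(v) = \tfrac{1}{2}[g(v)-g(w)]+\tfrac{1}{2}[g(v)+g(w)]$, applies the algebraic identity $[f(w)-f(v)][g(v)+g(w)]=[f(w)+f(v)][g(v)-g(w)]-2f(v)g(v)+2f(w)g(w)$, and then Young's inequality, to reach the same intermediate bound. You are also right to flag the factor bookkeeping against \eqref{eq:aronson-condition}: working through the integration by parts carefully, the transport term contributes $-\tfrac12 \mathcal T H$ and the non-local terms contribute $\tfrac12$ of each bracket, so the natural decay relation has coefficients $(1,1,1)$ rather than the paper's stated $(1,\tfrac12,\tfrac12)$; this is an internal inconsistency in the paper (it writes $\int f^2\varphi_R^2\mathcal T H$ without the $\tfrac12$ in Step 3 although \eqref{eq:aux1-transport} has the $\tfrac12$), and matters only up to a harmless constant.

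Two points where the paper is more careful and you should be too. First, the paper multiplies the test function by a spatial/velocity cutoff $\varphi_R$ with $\varphi_R\equiv 1$ on $B_{(R-1)^{1+2s}}\times B_{R-1}$ and then sends $R\to\infty$; without this, several integrals (notably $\int (f^2H)\int[K_h(v,w)-K_h(w,v)]$ and the tail piece) are not manifestly absolutely convergent, and the integration by parts in $x$ over $\R^d$ needs to be justified. Second, your identified ``secondary difficulty'' of controlling $\int_{B_\rho^c(v)}K_f(w,v)\dd w$ is genuine, and in fact the paper quietly cites \eqref{eq:upperbound-2} for exactly the term $\int_{B_{2R}\setminus B_\rho(v)} [\cdots]K_h(w,v)\dd w$, which formally bounds $K_f(v,\cdot)$ in the \emph{second} argument rather than the first; your proposed fix (transferring via \eqref{eq:1.6}--\eqref{eq:1.7}, or using the explicit Boltzmann representation so that the same dyadic argument applies to $w\mapsto K_f(w,v)$) is the correct way to close this, and is a point the paper should have spelled out.
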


\begin{proof}[Proof of Proposition \ref{prop:aronson}]

For $R \geq \max\{2, 2\rho+1\}$ we consider $\varphi_R \in C^\infty_c(\R^{2d})$ with $0 \leq \varphi_R \leq 1$ such that $\varphi_R \equiv 1$ for $(x, v) \in B_{(R-1)^{1+2s}} \times B_{R-1}$ and $\varphi_R \equiv 0$ for $(x, v)$ outside $B_{R^{1+2s}} \times B_{R}$, with bounded derivatives and such that $\abs{v \cdot \varphi_R} \sim R^{-2s}$. We test \eqref{eq:boltzmann-div} with $fH\varphi_R^2$ where $H$ satisfies \eqref{eq:aronson-condition} over $[\tau_0, \tau_1]\times \R^{2d}$ for $0 \leq \tau_0 \leq \tau_1 \leq \sigma$ and get
\bals
	\int_{[\tau_0, \tau_1]\times \R^{2d}}\mathcal Tf \Big(fH\varphi_R^2\Big)\dd z = &\int_{[\tau_0, \tau_1]\times \R^{2d}} \int_{\R^d} \big(f(w)- f(v)\big) K_h(v, w)\Big(fH\varphi_R^2\Big)(v) \dd w \dd z\\
	& +\int_{[\tau_0, \tau_1]\times \R^{2d}}f(v)\Big(fH\varphi_R^2\Big)(v) \left(\int_{\R^d}\big(K_h(v, w)-K_h(w, v)\big)\dd w\right) \dd z.
\eals

\textit{Step 1: Transport operator.}

First we integrate by parts the transport operator
\bal
	&\int_{[\tau_0, \tau_1]\times \R^{2d}}\mathcal Tf \Big(fH\varphi_R^2\Big) \dd z \\
	&\quad= \frac{1}{2}\int_{\R^{2d}} f^2 H \varphi_R^2\Big\vert_{t = \tau_0}^{\tau_1} \dd x \dd v- \int_{[\tau_0, \tau_1] \times \R^{2d}}f^2H \varphi_R v\cdot \nabla_x\varphi_R\dd z - \frac{1}{2}\int_{[\tau_0,\tau_1]\times\R^{2d}} f^2\varphi_R^2 \mathcal T H \dd z\\
	&\quad\geq \frac{1}{2}\int_{\R^{2d}} f^2 H \varphi_R^2\Big\vert_{t = \tau_0}^{\tau_1} \dd x\dd  v- CR^{-2s} \int_{[\tau_0, \tau_1] \times \R^{2d}}f^2H \varphi_R \dd z - \frac{1}{2}\int_{[\tau_0,\tau_1]\times\R^{2d}} f^2\varphi_R^2 \mathcal T H \dd z.
\label{eq:aux1-transport}
\eal

\textit{Step 2: Non-local operator.}

Now we deal with the non-local term. 
We write
\bals
	\int_{\R^{d}}\big(\tilde {\mathcal L}f \big)fH\varphi_R^2 \dd v &:=\int_{\R^d} \int_{\R^d} \big(f(w)- f(v)\big) K_h(v, w)\Big(fH\varphi_R^2\Big)(v) \dd w \dd v\\
	&\quad+\int_{\R^d} \big(f^2H\varphi_R^2\big)(v) \left(\int_{\R^d}\big(K_h(v, w)-K_h(w, v)\big)\dd w\right) \dd v\\
	&= \frac{1}{2} \int_{B_{2R}}\int_{B_{2R}} \big[f(w) - f(v)\big] \Big[\big(fH\varphi_R^2\big)(v) - \big(fH\varphi_R^2\big)(w)\Big] K_h(v, w)\dd w\dd v \\
	&\quad+ \frac{1}{2} \int_{B_{2R}}\int_{B_{2R}} \big[f(w) - f(v)\big] \Big[\big(fH\varphi_R^2\big)(v) + \big(fH\varphi_R^2\big)(w)\Big]K_h(v, w)  \dd w\dd v\\
	&\quad+\int_{\R^d} \big(f^2H\varphi_R^2\big)(v) \left(\int_{\R^d}\big(K_h(v, w)-K_h(w, v)\big)\dd w\right) \dd v\\
	&\quad+ \int_{B_{2R}}\int_{\R^d \setminus B_{2R}} \big[f(w) - f(v)\big] \big(fH\varphi_R^2\big)(v) K_h(v, w) \dd w\dd v.
\eals
Then we note that 
\bals
	 \big[f(w) - f(v)\big]& \Big[\big(fH\varphi_R^2\big)(v) + \big(fH\varphi_R^2\big)(w)\Big]\\
	 &=  \big[f(w) +f(v)\big] \Big[\big(fH\varphi_R^2\big)(v) - \big(fH\varphi_R^2\big)(w)\Big] - 2 f(v) \big(fH\varphi_R^2\big)(v)  + 2 f(w) \big(fH\varphi_R^2\big)(w),
\eals
and moreover, by Young's inequality
\bals
	f(w) &\Big[\big(fH\varphi_R^2\big)(v) - \big(fH\varphi_R^2\big)(w)\Big]  \\
	&\leq \frac{1}{2}f^2(w)\Big[ \big(H\varphi_R^2\big)(v)  - \big(H\varphi_R^2\big)(w)\Big] + \frac{1}{2} f^2(v) \big(H\varphi_R^2\big)(v) - \frac{1}{2} f^2(w)\big(H\varphi_R^2\big)(w).
\eals
Thus
\bal\label{eq:nonloc-split}
	\int_{\R^{d}}\big(\tilde {\mathcal L}f \big)fH\varphi_R^2 \dd v 
	&\leq   \frac{1}{2} \int_{B_{2R}}\int_{B_{2R}} f^2(w)  \Big[\big(H\varphi_R^2\big)(v) - \big(H\varphi_R^2\big)(w)\Big] K_h(v, w)\dd w\dd v \\
	&\quad + \frac{1}{2} \int_{B_{2R}}\int_{B_{2R}}  \Big[\big(f^2H\varphi_R^2\big)(w) - \big(f^2H\varphi_R^2\big)(v)\Big] K_h(v, w)\dd w\dd v\\
	&\quad +\int_{\R^d} \big(f^2H\varphi_R^2\big)(v) \left(\int_{\R^d}\big(K_h(v, w)-K_h(w, v)\big)\dd w\right) \dd v\\
	&\quad+ \int_{B_{2R}}\int_{\R^d \setminus B_{2R}} \big[f(w) - f(v)\big] \big(fH\varphi_R^2\big)(v) K_h(v, w) \dd w\dd v\\
	&\leq   \frac{1}{2} \int_{B_{2R}}\int_{B_{2R}} f^2(v)  \Big[\big(H\varphi_R^2\big)(w) - \big(H\varphi_R^2\big)(v)\Big] K_h(w, v)\dd w\dd v \\
	&\quad + \frac{1}{2} \int_{B_{2R}}\int_{B_{2R}}  \big(f^2H\varphi_R^2\big)(v) \big[K_h(w, v)-K_h(v, w)\big]\dd w\dd v\\
	&\quad +\int_{\R^d} \big(f^2H\varphi_R^2\big)(v) \left(\int_{\R^d}\big(K_h(v, w)-K_h(w, v)\big)\dd w\right) \dd v\\
	&\quad+ \int_{B_{2R}}\int_{\R^d \setminus B_{2R}} \big[f(w) - f(v)\big] \big(fH\varphi_R^2\big)(v) K_h(v, w) \dd w\dd v\\
	&=: \mathcal I_{loc}^H + \mathcal I_{loc}^{skew} +\mathcal I_{nonloc}^{skew} + \mathcal I_{tail}.
\eal

First we note that the last integral in \eqref{eq:nonloc-split} tends to zero as $R \to \infty$:
\bal\label{eq:tail-aux}
	\mathcal I_{tail} &= \int_{B_{2R}}\int_{\R^d \setminus B_{2R}} \big[f(w) - f(v)\big] \big(fH\varphi_R^2\big)(v) K_h(v, w) \dd w\dd v \\
	&\leq C\Lambda R^{-2s} \norm{H\varphi_R^2}_{L^\infty} \norm{f}_{L^1(B_R)} \norm{f}_{L^\infty(\R^d\setminus B_{2R})} \xrightarrow[R\to \infty]{} 0.
\eal

Second, we distinguish the singular from the non-singular part to bound $\mathcal I_{loc}^H$. We have
\bal\label{eq:I1-H}
	\mathcal I_{loc}^H &=  \frac{1}{2} \int_{B_{2R}}\int_{B_{2R}} f^2(v)  \Big[\big(H\varphi_R^2\big)(w) - \big(H\varphi_R^2\big)(v)\Big] K_h(w, v)\dd w\dd v \\
	&=  \frac{1}{2} \int_{B_{2R}}\int_{B_{2R} \cap B_\rho(v) } \dots \dd w \dd v +  \frac{1}{2} \int_{B_{2R}}\int_{B_{2R} \setminus B_\rho(v) } \dots \dd w \dd v=: \mathcal I_{loc}^{H, s} +  \mathcal I_{loc}^{H, ns}. 
\eal
Then we bound the non-singular part using \eqref{eq:upperbound-2}
\bal\label{eq:I1.2}
	 \mathcal I_{loc}^{H, ns} &=  \frac{1}{2} \int_{B_{2R}}\int_{B_{2R} \setminus B_\rho(v) }  f^2(v)  \Big[\big(H\varphi_R^2\big)(w) - \big(H\varphi_R^2\big)(v)\Big] K_h(w, v)\dd w\dd v\\
	&\leq C \rho^{-2s} \norm{H\varphi_R}_{L^\infty(B_{2R})}\norm{f}_{L^2(B_{2R})}^2. 
\eal
For the singular part, we observe that $\varphi_R(v) = \varphi_R(w)$ for any $w \in B_\rho(v)$ and $v \in B_{\frac{R-1}{2}}$ if $R \geq 1 + 2\rho$, so that we further split
\bal\label{eq:I1.1}
	 \mathcal I_{loc}^{H, s} &=  \frac{1}{2} \int_{B_{2R}}\int_{B_{2R} \cap B_\rho(v)}  f^2(v)  \Big[\big(H\varphi_R^2\big)(w) - \big(H\varphi_R^2\big)(v)\Big] K_h(w, v)\dd w\dd v\\
	&= \frac{1}{2} \int_{B_{\frac{R-1}{2}}}\int_{B_{2R} \cap B_\rho(v)}  f^2(v) \varphi_R^2(v) \Big[H(w) - H(v)\Big] K_h(w, v)\dd w\dd v\\
	&\quad +\frac{1}{2} \int_{B_{2R} \setminus B_{\frac{R-1}{2}}}\int_{B_{2R} \cap B_\rho(v)}  f^2(v)  \Big[\big(H\varphi_R^2\big)(w) - \big(H\varphi_R^2\big)(v)\Big] K_h(w, v)\dd w\dd v.
\eal
Then note that as $R \to \infty$, the last integral tends to zero as the integrand is bounded: Taylor expanding $H\varphi_R$, using \eqref{eq:upperbound} and \eqref{eq:1.7}
\bals
	\frac{1}{2} &\int_{B_{2R} \setminus B_{\frac{R-1}{2}}}\int_{B_{2R} \cap B_\rho(v)}  f^2(v)  \Big[\big(H\varphi_R^2\big)(w) - \big(H\varphi_R^2\big)(v)\Big] K_h(w, v)\dd w\dd v\\
	&=\frac{1}{2} \int_{B_{2R} \setminus B_{\frac{R-1}{2}}}\int_{B_{2R} \cap B_\rho(v)}  f^2(v)  D_v \big(H\varphi_R^2\big)(v)\cdot\big(w-v\big)  K_h(w, v)\dd w\dd v\\
	&\quad +C\norm{D_v^2\big(H\varphi_R^2\big)}_{L^\infty(B_{2R} \setminus B_{\frac{R-1}{2}})} \int_{B_{2R} \setminus B_{\frac{R-1}{2}}}\int_{B_{2R} \cap B_\rho(v)}  f^2(v) \abs{w-v}^2 K_h(w, v)\dd w\dd v\\
	&\leq C \rho^{1-2s} \int_{B_{2R} \setminus B_{\frac{R-1}{2}}} f^2(v)  D_v \big(H\varphi_R^2\big)(v)\dd v +C\rho^{2-2s}\norm{D_v^2\big(H\varphi_R^2\big)}_{L^\infty(B_{2R} \setminus B_{\frac{R-1}{2}})} \int_{B_{2R} \setminus B_{\frac{R-1}{2}}}f^2(v) \dd v\\
	&\xrightarrow[R\to \infty]{} 0.
\eals

We combine \eqref{eq:I1-H}, \eqref{eq:I1.2}, \eqref{eq:I1.1} and let $R \to \infty$, so that
\beq\label{eq:I1-H-final}
	\mathcal I_{loc}^H\leq C \rho^{-2s} \norm{H}_{L^\infty(\R^d)}\norm{f}_{L^2(\R^d)} + \frac{1}{2} \int_{\R^d} \int_{B_\rho(v)} f^2(v) \Big[H(w) - H(v)\Big] K_h(w, v)\dd w\dd v. 
\eeq

Thus, by letting $R \to \infty$, we conclude from \eqref{eq:nonloc-split}, \eqref{eq:tail-aux}, \eqref{eq:I1-H-final}, 
\bal\label{eq:nonloc-op}
	\int_{\R^{d}}\big(\tilde {\mathcal L}f \big)fH \dd v &\leq \frac{1}{2} \int_{\R^d} f^2(v) \int_{B_\rho(v)}  \Big[H(w) - H(v)\Big] K_h(w, v)\dd w\dd v \\
	&+ \frac{1}{2} \int_{\R^d} (f^2H)(v) \int_{\R^d } \big[ K_h(v, w) - K_h(w, v)\big]\dd w\dd v + C \rho^{-2s}\norm{H}_{L^\infty(\R^d)} \norm{f}_{L^2(\R^d)}^2.
\eal

\textit{Step 3: Conclusion.}

Equation \eqref{eq:aux1-transport} implies
\bals
	\frac{1}{2}\int_{\R^{2d}} &f^2(\tau_1, x, v) H(\tau_1, x, v) \varphi_R^2 \dd x \dd v \\
	&\leq  \frac{1}{2}\int_{\R^{2d}} f^2(\tau_0, x, v) H(\tau_0, x, v) \varphi_R^2 \dd x \dd v+ CR^{-2s} \int_{[\tau_0, \tau_1] \times \R^{2d}}f^2(z)H(z) \varphi_R(x, v) \dd z \\
	&\quad+ \int_{[\tau_0,\tau_1]\times\R^{2d}} f^2(z) \varphi_R^2(x, v)\mathcal T H(z) \dd z+ \int_{[\tau_0,\tau_1]\times\R^{2d}} \big(\tilde {\mathcal L} f\big)(z) f(z) H(z) \varphi_R^2(x, v) \dd z,
\eals
which as $R \to \infty$ yields together with \eqref{eq:nonloc-op}
\bals
	\frac{1}{2}&\int_{\R^{2d}} f^2(\tau_1, x, v) H(\tau_1, x, v)  \dd x \dd v \\
	&\leq   \frac{1}{2}\int_{\R^{2d}} f^2(\tau_0, x, v) H(\tau_0, x, v) \dd x \dd v +  \int_{[\tau_0,\tau_1]\times\R^{2d}} f^2(z)  \mathcal T H(z) \dd z\\
	&\quad +\frac{1}{2}\int_{[\tau_0,\tau_1]\times\R^{2d}} f^2(z) \int_{B_\rho(v)} \Big[H(w) - H(v)\Big] K_h(w, v) \dd w\dd z\\
	&\quad+ \frac{1}{2}\int_{[\tau_0,\tau_1]\times\R^{2d}} f^2(z) H(z) \int_{\R^d} \big[K_h(v, w)-K_h(w, v)\big]\dd w \dd v +C\rho^{-2s}\norm{H}_{L^\infty} \norm{f}_{L^2([\tau_0, \tau_1]\times \R^{2d})}^2 \\
	&\leq   \frac{1}{2}\int_{\R^{2d}} f^2(\tau_0, x, v) H(\tau_0, x, v) \dd x \dd v +C \rho^{-2s}\norm{H}_{L^\infty} \norm{f}_{L^2([\tau_0, \tau_1]\times \R^{2d})}^2,
\eals
since by construction $H$ satisfies \eqref{eq:aronson-condition}. This concludes the proof of Proposition \ref{prop:aronson}.
\end{proof}

It now only remains to show that we can construct a function $H$ satisfying \eqref{eq:aronson-condition}. 
\begin{lemma}\label{lem:existence-H}
Let $y_0, w_0 \in \R^d$.  Let $0 < \rho$ and $0 \leq \tau_0  <\sigma$. Let $k \geq 1$ and $\alpha \geq 0$ be such that $\sigma - \tau_0 \leq \frac{\rho^{2s}}{4k}$.
For $(t, x, v) \in [\tau_0, \sigma] \times \R^{2d}$, we define $\delta(t) := 2(\sigma - \tau_0) - (t - \tau_0)$ and
\beq\label{eq:H}
	H(t, x, v) := e^{-\max\Big\{1, \frac{1}{3\rho}\max\Big(\abs{v-w_0}, \abs{x - y_0 - (\sigma + t - 2\tau_0)w_0}^{\frac{1}{1+2s}}\Big)\Big\}\log\big(\frac{\rho^{2s}}{k \delta(t)}\big)} e^{\alpha\frac{(\sigma - t)}{\rho^{2s}}}.
\eeq
Then there exist constants $C_1, C_2 > 0$ depending only on $s, d, \Lambda_0$ such that, if $k > C_1$ and $\alpha > C_2$, then $H$ satisfies \eqref{eq:aronson-condition}, where $K_h$ is a non-negative kernel satisfying \eqref{eq:upperbound}, \eqref{eq:1.6}, \eqref{eq:1.7}. 
\end{lemma}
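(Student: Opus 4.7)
The plan is to verify \eqref{eq:aronson-condition} pointwise on $[\tau_0,\sigma]\times\R^{2d}$ by isolating two negative contributions in $\mathcal T H$: one of size $\alpha\rho^{-2s}H$ produced by the outer factor $e^{\alpha(\sigma-t)/\rho^{2s}}$, and one of size $M H/\delta(t)$ produced by the time-dependence of the logarithm. The goal is to show that these two negative terms absorb every other contribution once $k$ and $\alpha$ are chosen large enough. Decomposing $H = H_1 H_2$ with $H_2(t) = e^{\alpha(\sigma-t)/\rho^{2s}}$ and $H_1 = e^{-M(t,x,v)\log(\rho^{2s}/(k\delta(t)))}$, I would first record that $\sigma-\tau_0 \leq \rho^{2s}/(4k)$ ensures $\rho^{2s}/(k\delta(t)) \geq 2$ and hence $H_1 \leq 1$ with positive logarithm, that $M \geq 1$ and $|\nabla_v M| \leq 1/(3\rho)$, and that $(\partial_t+v\cdot\nabla_x)[x-y_0-(\sigma+t-2\tau_0)w_0] = v-w_0$. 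A direct computation then yields
$$\mathcal T H = -\Bigl[\tfrac{\alpha}{\rho^{2s}}+\tfrac{M}{\delta(t)}+(\mathcal T M)\log\tfrac{\rho^{2s}}{k\delta(t)}\Bigr] H,$$
and the max-structure of $M$ gives $\mathcal T M = 0$ on the branches $M = 1$ and $M = |v-w_0|/(3\rho)$, while $|\mathcal T M| \leq CM^{1-2s}\rho^{-2s}$ on the remaining branch thanks to $|v-w_0| \leq 3\rho M$.

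For the short-range non-local integral I would Taylor expand $w \mapsto H(w)$ to second order around $v$; on the smooth branches of $M$, $|\nabla_v H| \leq CH\log(\rho^{2s}/(k\delta))/\rho$ and $|D_v^2 H| \leq CH\log^2(\rho^{2s}/(k\delta))/\rho^2$, the non-smooth thresholds being handled via a standard smoothing of the max. Writing $K_h(w,v) = \tfrac12(K_h(v,w)+K_h(w,v))+\tfrac12(K_h(w,v)-K_h(v,w))$, the symmetric contribution is estimated exactly as in \cite[Proposition 6.4]{AL-div}: the first-order piece vanishes against $K_h(v,\cdot)$ by the pointwise symmetry $K_h(v,v+w')=K_h(v,v-w')$ and is $O(\rho^{1-2s})$ against $K_h(\cdot,v)$ via \eqref{eq:1.7}, while the second-order piece is handled via \eqref{eq:upperbound}. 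The skew contribution is controlled by Cauchy--Schwarz together with the lower-order bound \eqref{eq:abs-K-skew}. Altogether one obtains
$$\Bigl|\tfrac12\int_{B_\rho(v)}[H(w)-H(v)]K_h(w,v)\,\dd w\Bigr| \leq CH(v)\rho^{-2s}\Bigl[1+\log^2\tfrac{\rho^{2s}}{k\delta(t)}\Bigr],$$
while the weak cancellation \eqref{eq:1.6} directly gives $\bigl|\tfrac{H(v)}{2}\int_{\R^d}[K_h(v,w)-K_h(w,v)]\dd w\bigr| \leq \tfrac{\Lambda_0}{2}H(v)$.

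Collecting everything, \eqref{eq:aronson-condition} reduces to checking the scalar inequality
$$\tfrac{\alpha}{\rho^{2s}}+\tfrac{M}{\delta(t)} \geq \tfrac{\Lambda_0}{2}+C\rho^{-2s}\bigl[M^{1-2s}+1\bigr]\log\tfrac{\rho^{2s}}{k\delta(t)}+C\rho^{-2s}\log^2\tfrac{\rho^{2s}}{k\delta(t)}.$$
Setting $u := k\delta(t)/\rho^{2s}\in(0,1/2]$ and rewriting $M/\delta = (k/u)M\rho^{-2s}$, the boundedness of $u\mapsto u\log(1/u)$ and $u\mapsto u\log^2(1/u)$ on $(0,1/2]$, together with $M \geq 1$, implies that $M/\delta$ absorbs the entire log- and log-squared right-hand side uniformly in $u$ as soon as $k$ exceeds a constant $C_1 = C_1(s,d,\Lambda_0)$. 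The residual constant $\Lambda_0/2$ is then absorbed by $\alpha/\rho^{2s}$ upon choosing $\alpha > C_2 = C_2(s,d,\Lambda_0)$, which closes the argument.

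The main obstacle is the tight coupling between the Lipschitz and Hessian scales of $H$ inside $B_\rho(v)$, which are of sizes $\log/\rho$ and $\log^2/\rho^2$ with $\log = \log(\rho^{2s}/(k\delta))$ blowing up as $\delta\to 0$, and the negative term $M/\delta$ used to absorb them; the whole scheme closes only because $u\log^2(1/u)$ stays bounded on $(0,1/2]$, which matches the blow-up rates exactly. A secondary difficulty is the non-self-adjointness of $K_h$, which prevents the direct use of \eqref{eq:upperbound} on the transposed kernel $K_h(\cdot,v)$ and forces us to combine the weak divergence-form cancellations \eqref{eq:1.6} and \eqref{eq:1.7} with the lower-order bound \eqref{eq:abs-K-skew} on the anti-symmetric part.
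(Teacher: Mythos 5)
Your overall strategy — peel off the factor $e^{\alpha(\sigma-t)/\rho^{2s}}$ to cancel the weakly divergent antisymmetric term via \eqref{eq:1.6}, compute $\mathcal T H$ exactly, and reduce the short-range integral to a scalar inequality in $u = k\delta(t)/\rho^{2s}$ — is in the same spirit as the paper's sketch (which also isolates the $\alpha$-factor in \eqref{eq:a-c-aux} and reduces to \eqref{eq:a-c-aux2}). Where you differ is in replacing the paper's deferred ``case distinction à la \cite[Lemma~6.5]{AL-div}'' with a direct second-order Taylor expansion. However, two concrete steps in your argument do not close as written.

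First, your bound $\abs{D_v^2 H} \leq C H \log^2(\rho^{2s}/(k\delta))/\rho^2$ holds \emph{at the center} $v$, but the Lagrange remainder of a second-order expansion requires $\sup_{\xi \in [v,w]}\abs{D_v^2 H(\xi)}$. Since $\abs{M(\xi)-M(v)} \leq \abs{\xi-v}/(3\rho) \leq 1/3$ on $B_\rho(v)$, one only gets $H(\xi) \leq e^{L/3}H(v)$ with $L = \log(\rho^{2s}/(k\delta(t)))$, so the Hessian bound over the ball carries an additional factor $e^{L/3} = u^{-1/3}$. Your concluding scalar inequality therefore requires boundedness of $u^{2/3}\log^2(1/u)$ on $(0,1/2]$, not of $u\log^2(1/u)$ — this still holds, so the argument can be repaired, but it contradicts your remark that $u\log^2(1/u)$ ``matches the blow-up rates exactly,'' which signals that the remainder at intermediate points was overlooked. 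Second, you invoke \eqref{eq:abs-K-skew} to control the skew part of the second-order contribution, but \eqref{eq:abs-K-skew} is \emph{not} among the hypotheses of Lemma~\ref{lem:existence-H}, which only assumes \eqref{eq:upperbound}, \eqref{eq:1.6}, \eqref{eq:1.7}. Moreover \eqref{eq:abs-K-skew} would not suffice anyway: the second-order piece requires a control on $\int_{B_\rho(v)}\abs{v-w}^2 K_h(w,v)\,\dd w$, with the kernel evaluated with $v$ in the \emph{second} slot, and neither Cauchy--Schwarz against \eqref{eq:abs-K-skew} nor the combination $K_h(w,v)=K_h(v,w)+(K_h(w,v)-K_h(v,w))$ with \eqref{eq:1.7} (which only bounds a principal-value first moment) yields it from the stated assumptions. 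For the Boltzmann kernel this second-moment bound is available by the explicit representation of $K_h$ and the $L^\infty$ bound on $h$; you should either add it as an explicit hypothesis (as the paper implicitly uses in its proof of Proposition~\ref{prop:aronson} when it bounds $\mathcal I_{loc}^{H,ns}$) or invoke the Boltzmann-specific structure, rather than rely on \eqref{eq:abs-K-skew}.

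Minor points: the identity $\mathcal T H = -[\alpha\rho^{-2s}+M/\delta(t)+(\mathcal T M)L]H$, the vanishing of $\mathcal T M$ on the $\abs{v-w_0}$-branch, and the estimate $\abs{\mathcal T M} \leq C M^{1-2s}\rho^{-2s}$ on the $x$-branch are all correct and constitute a useful explicit version of the paper's claim.
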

To check that $H$ satisfies \eqref{eq:aronson-condition}, we note that due to \eqref{eq:1.6}, we choose $\alpha$ such that 
\beq\label{eq:a-c-aux}
	-\frac{ \alpha}{\rho^{2s}} H(v) + \frac{1}{2} H(v) \int_{\R^d}  \left[K_h(v, w)-K_h(w, v)\right] \dd w \leq  - \alpha H + \Lambda_0  H \leq 0.
\eeq
Then, for ease of notation, we write
\beqs
	H(t, x, v) = H_0(t, x, v) e^{\alpha\frac{(\sigma - t)}{\rho^{2s}}},
\eeqs
so that if $H_0$ satisfies 
\bal\label{eq:a-c-aux2}
  \mathcal T H_0 &+ \frac{1}{2} \int_{B_\rho(v)} \Big[H_0(w) - H_0(v)\Big] K_h(w, v) \dd w \leq 0,
\eal
then in particular $H$ satisfies \eqref{eq:aronson-condition} due to \eqref{eq:a-c-aux}. One can verify \eqref{eq:a-c-aux2} by a case distinction, similar to \cite[Lemma 6.5]{AL-div}.

Note that the remainder of the proof of Theorem 6.2 and Theorem 6.3 in \cite{AL-div} carries over almost verbatim, upon replacing Proposition 6.4 and Lemma 6.5 of \cite{AL-div} by Proposition \ref{prop:aronson}, and Lemma \ref{lem:existence-H}. In particular, the Theorem \ref{thm:upperbounds} follows similarly to \cite[Theorem 1.2]{AL-div}.

\subsection{On the exponential lower bound}
The only part where we use the divergence form symmetry of the non-local operator in the derivation of the exponential lower bound on the fundamental solution is in \cite[Section 7.3]{AL-div}. Instead, we argue as follows.

Fix $(\tau_0, y_0, w_0) \in [0, T]\times \R^{2d}$ and let $0 < \tau_0 < \sigma < T$. Consider for $\tau_1 < \sigma$ and for some $\alpha$ large enough the function
\beqs
	f(\tau_1, y_1, w_1) = M \int_{\max\left\{ \abs{x - y_0 - (\sigma-\tau_0)w_0}^{\frac{2s}{1+2s}}, \abs{v - w_0}^{2s} \right\} < \alpha(\sigma- \tau_0)} J(\sigma, x, v; \tau_1, y_1, w_1) \dd x \dd v,
\eeqs
where $M \geq 1$ is some constant such that $f(\tau_1, y_1, w_1) \leq M$ for $(\tau_1, y_1, w_1) \in [0, \sigma] \times \R^{2d}$. 
Define $g(\tau_1, y_1, w_1) := f(\tau_1, y_1, w_1) e^{-c_0(\tau_1 - \sigma)}$ for $\tau_1 < \sigma$. Then $g$ is a super-solution to the adjoint of \eqref{eq:boltzmann-div} in $(0, \sigma) \times \R^{2d}$, with initial values
\bals
	\begin{cases}
		g(\sigma, y_1, w_1) = M, \qquad &\textrm{if } \max\left\{ \abs{y_1 - y_0 - (\sigma-\tau_0)w_0}^{\frac{2s}{1+2s}}, \abs{w_1 - w_0}^{2s}\right\} < \alpha(\sigma - \tau_0), \\
		g(\sigma, y_1, w_1) = 0, \qquad &\textrm{if } \max\left\{ \abs{y_1 - y_0 - (\sigma-\tau_0)w_0}^{\frac{2s}{1+2s}}, \abs{w_1 - w_0}^{2s}\right\} > \alpha(\sigma - \tau_0).
	\end{cases}
\eals	
since 
\beqs
	\mathcal L^* g(v) = \int_{\R^d} \big(g(w) -g(v)\big)K_h(v, w) \dd w + \Bigg(\int_{\R^d} K_h(w, v) - K_h(v, w) \dd w \Bigg) g(v), 
\eeqs
so that using \eqref{eq:1.6} and choosing $c_0 > 0$ large enough, we get 
\beqs
	\mathcal T^* g(v)- \mathcal L^* g(v)  =- e^{-c_0(\tau_1 - \sigma)} \mathcal T f - e^{-c_0(\tau_1 - \sigma)} \mathcal L f -\Lambda_0 g +c_0 g \geq 0.
\eeqs
If we set 
\bals
	\tilde g(\tau_1, y_1, w_1) =
	\begin{cases}
		g(\tau_1, y_1, w_1), \qquad &\textrm{if } \tau_1 < \sigma, \\
		M e^{-c_0(\tau_1 - \sigma)}, \qquad &\textrm{if } \tau_1 > \sigma,
	\end{cases}
\eals
then $\tilde g$ is a non-negative super-solution of the adjoint of \eqref{eq:boltzmann-div} in $(0, \infty) \times \R^{2d}$ since $\tilde g \leq M e^{-c_0(\tau_1 - \sigma)}$ for $(\tau_1, y_1, w_1) \in [0, \infty) \times \R^{2d}$, so that for $\tau_1 > \sigma$
\beqs
	\mathcal T^* \tilde g - \mathcal L^* \tilde g = c_0 M e^{-c_0(\tau_1 - \sigma)} -\mathcal L \tilde g  - \Lambda_0 \tilde g = Me^{-c_0(\tau_1 - \sigma)}(c_0 - \Lambda_0) +\int \left[Me^{-c_0(\tau_1 - \sigma)}- \tilde g(w)\right] K_h(v, w) \dd w \geq 0. 
\eeqs

Thus by the Weak Harnack inequality \cite[Theorem 1.6]{IS}, we get using $-c_0(\tau_1 - \sigma) \geq 0$ for $c_0 \geq 0$ and the fact that $\tilde f(\sigma, y, w) = M$ for some $\delta > 0$ sufficiently small
\bals
	\tilde g(\tau_0, y_0, w_0) &\geq \left(\int_{\sigma-\delta}^{\sigma+\delta} \int_{\max\left\{ \abs{y' - y_0- (\tau'-\tau_0)w_0}^{\frac{2s}{1+2s}}, \abs{w' - w_0}^{2s} \right\} < \alpha(\sigma - \tau_0)}  \tilde g^{\zeta}(\tau', y', w') \dd y' \dd w' \dd \tau'\right)^{\frac{1}{\zeta}}\\
	&\geq \left(\int_{\sigma-\delta}^{\sigma+\delta} \int_{\max\left\{ \abs{y' - y_0 - (\tau'-\tau_0)w_0}^{\frac{2s}{1+2s}}, \abs{w' - w_0}^{2s} \right\} < \alpha(\sigma- \tau_0)} \tilde  f^{\zeta}(\tau', y', w') \dd y' \dd w' \dd \tau'\right)^{\frac{1}{\zeta}}\\	
	&\geq c > 0,
\eals
so that 
\beqs
	f(\tau_0, y_0, w_0) \geq c e^{c_0(\tau_0 - \tau_1)}. 
\eeqs
We conclude the proof of Theorem \ref{thm:lowerbounds} with \cite[Theorem 7.2]{AL-div}.

\bibliographystyle{acm}
\bibliography{atm}

\end{document}